\providecommand{\tabularnewline}{\\}
\numberwithin{equation}{section} 
\numberwithin{figure}{section} 
\theoremstyle{plain}
\newtheorem{thm}{Theorem}[section]
  \theoremstyle{definition}
  \newtheorem{defn}[thm]{Definition}
  \theoremstyle{plain}
  \newtheorem{prop}[thm]{Proposition}
  \theoremstyle{plain}
  \newtheorem{algorithm}[thm]{Algorithm}
  \theoremstyle{plain}
 \theoremstyle{definition}
  \newtheorem{example}[thm]{Example}
  \theoremstyle{plain}
  \newtheorem{assumption}[thm]{Assumption}
  \theoremstyle{plain}
  \theoremstyle{plain}
  \newtheorem{lem}[thm]{Lemma}
  \theoremstyle{remark}
  \newtheorem{rem}[thm]{Remark}
\newcommand{\lev}{\mbox{\rm lev}}
\newcommand{\cl}{\mbox{\rm cl}}
\newcommand{\conv}{\mbox{\rm conv}}
\newcommand{\wlim}{\mathop{\rm{w-lim}}}
\newcommand{\dom}{\mbox{\rm dom}}
\title{Level set methods for finding critical points of mountain pass type}
\begin{document}

\date{\today}

\author{A.S. Lewis }

\curraddr{School of Operations Research and Information Engineering, Cornell
University, Ithaca, NY 14853. }

\email{aslewis@orie.cornell.edu.}

\author{C.H.J. Pang}

\curraddr{Combinatorics and Optimization, University of Waterloo, 200 University
Ave W., Waterloo, ON, Canada N2l 3G1.}

\email{chj2pang@math.uwaterloo.ca}
\begin{abstract}
Computing mountain passes is a standard way of finding critical points.
We describe a numerical method for finding critical points that is
convergent in the nonsmooth case and locally superlinearly convergent
in the smooth finite dimensional case. We apply these techniques to
describe a strategy for the Wilkinson problem of calculating the distance
of a matrix to a closest matrix with repeated eigenvalues. Finally,
we relate critical points of mountain pass type to nonsmooth and metric
critical point theory.
\end{abstract}
\maketitle
\tableofcontents{}

\keywords{Keywords: mountain pass, nonsmooth critical points, superlinear convergence,
metric critical point theory, Wilkinson distance.}

\section{Introduction}

Computing mountain passes is an important problem in computational
chemistry and in the study of nonlinear partial differential equations.
We begin with the following definition.
\begin{defn}
\label{def:mountain-pass}Let $X$ be a topological space, and consider
$a,b\in X$. For a function $f:X\rightarrow\mathbb{R}$, define a
\emph{mountain pass} $p^{*}\in\Gamma(a,b)$ to be a minimizer of the
problem \[
\inf_{p\in\Gamma(a,b)}\sup_{0\leq t\leq1}f\circ p(t).\]
Here, $\Gamma(a,b)$ is the set of continuous paths $p:[0,1]\rightarrow X$
such that $p(0)=a$ and $p(1)=b$. 
\end{defn}
An important problem in computational chemistry is to find the lowest
energy to transition between two stable states. If $a$ and $b$ represent
two states and $f$ maps the states to their potential energies, then
the mountain pass problem calculates this lowest energy. Early work
on computing transition states includes Sinclair and Fletcher \cite{SF74},
and recent work is reviewed by Henkelman, J\'{o}hannesson and J\'{o}nsson
\cite{HJJ00}. We refer to this paper for further references in the
Computational Chemistry literature. 

Perhaps more importantly, the mountain pass idea is also a useful
tool in the analysis of nonlinear partial differential equations.
For a Banach space $X$, variational problems are problems (P) such
that there exists a smooth functional $J:X\rightarrow\mathbb{R}$
whose critical points (points where $\nabla J=0$) are solutions of
(P). Many partial differential equations are variational problems,
and critical points of $J$ are {}``weak'' solutions. In the landmark
paper by Ambrosetti and Rabinowitz \cite{AR73}, the mountain pass
theorem gives a sufficient condition for the existence of critical
points in infinite dimensional spaces. If an optimal path to solve
the mountain pass problem exists and the maximum along the path is
greater than $\max(f(a),f(b))$, then the maximizer on the path is
a critical point distinct from $a$ and $b$. The mountain pass theorem
and its variants are the primary ways to establish the existence of
critical points and to find critical points numerically. For more
on the mountain pass theorem and some of its generalizations, we refer
the reader to \cite{Jabri03}. 

In \cite{CM93}, Choi and McKenna proposed a numerical algorithm for
the mountain pass problem by using an idea from Aubin and Ekeland
\cite{AE84} to solve a semilinear partial differential equation.
This is extended to find solutions of \emph{Morse index} 2 (that is,
the maximum dimension of the subspace of $X$ on which $J^{\prime\prime}$
is negative definite) in Ding, Costa and Chen \cite{DCC99}, and then
to higher Morse index by Li and Zhou \cite{LZ01}. 

Li and Zhou \cite{LZ02}, and Yao and Zhou \cite{YZ07} proved convergence
results to show that their minimax method is sound for obtaining weak
solutions to nonlinear partial differential equations. Mor\'{e} and
Munson \cite{MM04} proposed an {}``elastic string method'', and
proved that the sequence of paths created by the elastic string method
contains a limit point that is a critical point.

The prevailing methods for numerically solving the mountain pass problem
are motivated by finding a sequence of paths (by discretization or
otherwise) such that the maximum along these paths decrease to the
optimal value. Indeed, many methods in \cite{HJJ00} approximate a
mountain pass in this manner. As far as we are aware, only \cite{BT07,H04}
deviate from this strategy. We make use of a different approach by
looking at the path connected components of the lower level sets of
$f$  instead.

One easily sees that $l$ is a lower bound of the mountain pass problem
if and only if $a$ and $b$ lie in two different path connected components
of $\lev_{\leq l}f$. A strategy to find an optimal mountain pass
is to start with a lower bound $l$ and keep increasing $l$ until
the path connected components of $\lev_{\leq l}f$ containing $a$
and $b$ respectively coalesce at some point. However, this strategy
requires one to determine whether the points $a$ and $b$ lie in
the same path connected component, which is not easy. We turn to finding
saddle points of mountain pass type, as defined below.
\begin{defn}
\label{def:saddle-point}For a function $f:X\rightarrow\mathbb{R}$,
a \emph{saddle point of mountain pass type} $\bar{x}\in X$ is a point
such that there exists an open set $U$ such that $\bar{x}$ lies
in the closure of two path components of $(\lev_{<f(\bar{x})}f)\cap U$.
\end{defn}
We shall refer to saddle points of mountain pass type simply as saddle
points. As an example, for the function $f:\mathbb{R}^{2}\rightarrow\mathbb{R}$
defined by $f(x)=x_{1}^{2}-x_{2}^{2}$, the point $\mathbf{0}$ is
a saddle point of mountain pass type: We can choose $U=\mathbb{R}^{2}$,
$a=(0,1)$, $b=(0,-1)$. When $f$ is $\mathcal{C}^{1}$, it is clear
that saddle points are critical points. As we shall see later (in
Propositions \ref{pro:equiv-mountain} and \ref{pro:semi-algebraic-mtn}),
saddle points of mountain pass type can, under reasonable conditions,
be characterized as maximal points on mountain passes, acting as {}``bottlenecks''
between two components. In fact, if $f$ is $\mathcal{C}^{2}$, the
Hessians are nonsingular and several mild assumptions hold, these
bottlenecks are exactly critical points of Morse index 1. We refer
the reader to the lecture notes by Ambrosetti \cite{Amb92}. Some
of the methods in \cite{HJJ00} actually find saddle points instead
of solving the mountain pass problem. 

We propose numerical methods to find saddle points using the strategy
suggested in Definition \ref{def:saddle-point}. We start with a lower
bound $l$ and keep increasing $l$ until the components of the level
set $\lev_{\leq l}f\cap U$ containing $a$ and $b$ respectively
coalesce, reaching the objective of the mountain pass problem. The
first method we propose in Algorithm \ref{alg:globally-convergent-MPT}
is purely metric in nature. One appealing property of this method
is that calculations are now localized near the critical point and
we keep track of only two points instead of an entire path. Our algorithm
enjoys a monotonicity property: The distance between two components
decreases monotonically as the algorithm progresses, giving an indication
of how close we are to the saddle point. In a practical implementation,
local optimality properties in terms of the gradients (or generalized
gradients) can be helpful for finding saddle points. Such optimality
conditions are covered in Section \ref{sec:optim-conditions}.

It follows from the definitions that our algorithm, if it converges,
converges to a saddle point. We then prove that any saddle point is
deformationally critical in the sense of metric critical point theory
\cite{DM94,Katriel94,IS96}, and is Morse critical under additional
conditions. This implies in particular that any saddle point is Clarke
critical in the sense of nonsmooth critical point theory \cite{Chang81,Shi85}
based on nonsmooth analysis in the spirit of \cite{BZ05,Cla83,Mor06,RW98}.
It seems that there are few existing numerical methods for finding
either critical points in a metric space or nonsmooth critical points.
Currently, we are only aware of \cite{YZ05}.   

One of the main contributions of this paper is to give a second method
(in Section \ref{sec:locally-superlinearly-convergent}) which converges
locally superlinearly to a nondegenerate smooth critical point, i.e.,
critical points where the Hessian is nonsingular, in $\mathbb{R}^{n}$.
A potentially difficult step in this second method is that we have
to find the closest point between two components of the level sets.
While the effort meeded to perform this step accurately may be great,
the purpose of this step is to make sure that the problem is well
aligned after this step. Moreover, this step need not be performed
to optimality. In our numerical example in Section \ref{sec:Wilkinson-2},
we were able to obtain favorable results without performing this step.

Our initial interest in the mountain pass problem came from computing
the $2$-norm distance of a matrix $A$ to the closest matrix with
repeated eigenvalues. This is also known as the Wilkinson problem,
and this value is the smallest $2$-norm perturbation that will make
the eigenvalues of matrix $A$ behave in a non-Lipschitz manner. Alam
and Bora \cite{AB05a} showed how the Wilkinson's problem can be reduced
to a global mountain pass problem. We do not solve the global mountain
pass problem associated with the Wilkinson problem, but we demonstrate
that locally our algorithm converges quickly to a smooth critical
point of mountain pass type.

\textbf{Outline: }Section \ref{sec:global-convergent} illustrates
a local algorithm to find saddle points of mountain pass type, while
Sections \ref{sec:locally-superlinearly-convergent}, \ref{sec:Proof-of-superlinear}
and \ref{sec:fast-local-observations} are devoted to the statement,
proof of convergence, and additional observations of a fast local
algorithm to find nondegenerate critical points of Morse index 1 in
$\mathbb{R}^{n}$. 

Sections \ref{sec:Global-convergence} discusses the relationship
between mountain passes, saddle points, and critical points in the
sense of metric critcal point theory and nonsmooth analysis, and does
not depend on material in Sections \ref{sec:locally-superlinearly-convergent},
\ref{sec:Proof-of-superlinear} and \ref{sec:fast-local-observations}. 

Finally, Sections \ref{sec:Wilkinson-1} and \ref{sec:Wilkinson-2}
illustrates the fast local algorithm in Section \ref{sec:locally-superlinearly-convergent}.
Section \ref{sec:optim-conditions} discusses optimality conditions
for the subproblem in the algorithm in Section \ref{sec:global-convergent}. 

\textbf{Notation:} As we will encounter situations where we want to
find the square of the $j$th coordinate of the $i$th iterate of
$x$, we write $x_{i}^{2}(j)$ in the proof of Theorem \ref{thm:superline-conv}.
In other parts, it will be clear from context whether the $i$ in
$x_{i}$ is used as an iteration counter or as a reference to the
$i$th coordinate.  Let $\mathbb{B}^{d}(\mathbf{0},r)$ be the ball
with center $\mathbf{0}$ and radius $r$ in $\mathbb{R}^{d}$, and
$\mathring{\mathbb{B}}^{d}(\mathbf{0},r)$ be the corresponding open
ball.

\section{\label{sec:global-convergent}A level set algorithm}

We present a level set algorithm to find saddle points.  Assume
$f:X\rightarrow\mathbb{R}$, where $(X,d)$ is a metric space.
\begin{algorithm}
(Level set algorithm) \label{alg:globally-convergent-MPT}A local
bisection method for approximating a mountain pass from $x_{0}$ to
$y_{0}$ for $f\mid_{U}$, where both $x_{0}$ and $y_{0}$ lie in
some open path connected set $U$.\end{algorithm}
\begin{enumerate}
\item Start with an upper bound $u$ and a lower bound $l$ for the objective
of the mountain pass problem and $i=0$.
\item Solve the optimization problem \begin{eqnarray}
 & \min & d(x,y)\nonumber \\
 & \mbox{s.t.} & x\in S_{1},y\in S_{2}\label{eq:optimization}\end{eqnarray}
where $S_{1}$ is the component of the level set $(\lev_{\leq\frac{1}{2}(l+u)}f)\cap U$
that contains $x_{i}$ and $S_{2}$ is the component that contains
$y_{i}$. 
\item If $S_{1}$ and $S_{2}$ are the same component, then $\frac{1}{2}(l+u)$
is an upper bound, otherwise it is a lower bound. Update the upper
and lower bounds accordingly. In the case where the lower bound is
changed, increase $i$ by $1$, and let $x_{i}$ and $y_{i}$ be the
minimizers of \eqref{eq:optimization}. For future discussions, let
$l_{i}$ corresponding value of $l$ to $x_{i}$ and $y_{i}$. Repeat
step 2 until $x_{i}$ and $y_{i}$ are sufficiently close.
\item If an actual approximate mountain pass is desired, take a path $p_{i}:[0,1]\rightarrow U\cap(\lev_{\leq u}f)$
connecting the points\[
x_{0},x_{1},\dots,x_{i-2},x_{i-1},x_{i},y_{i},y_{i-1},y_{i-2},\dots,y_{1},y_{0}.\]

\end{enumerate}
Step (3) is illustrated in Figure \ref{fig:Illustrate-global}.

\begin{figure}
\begin{longtable}{|>{\raggedright}b{1.1in}|c|c|}
\hline 
Case & Before & After\tabularnewline
\hline
\noindent $\left\{ x\mid f(x)\leq\frac{u+l}{2}\right\} $

\noindent 2 components & \begin{tabular}{c}
\includegraphics[scale=0.3]{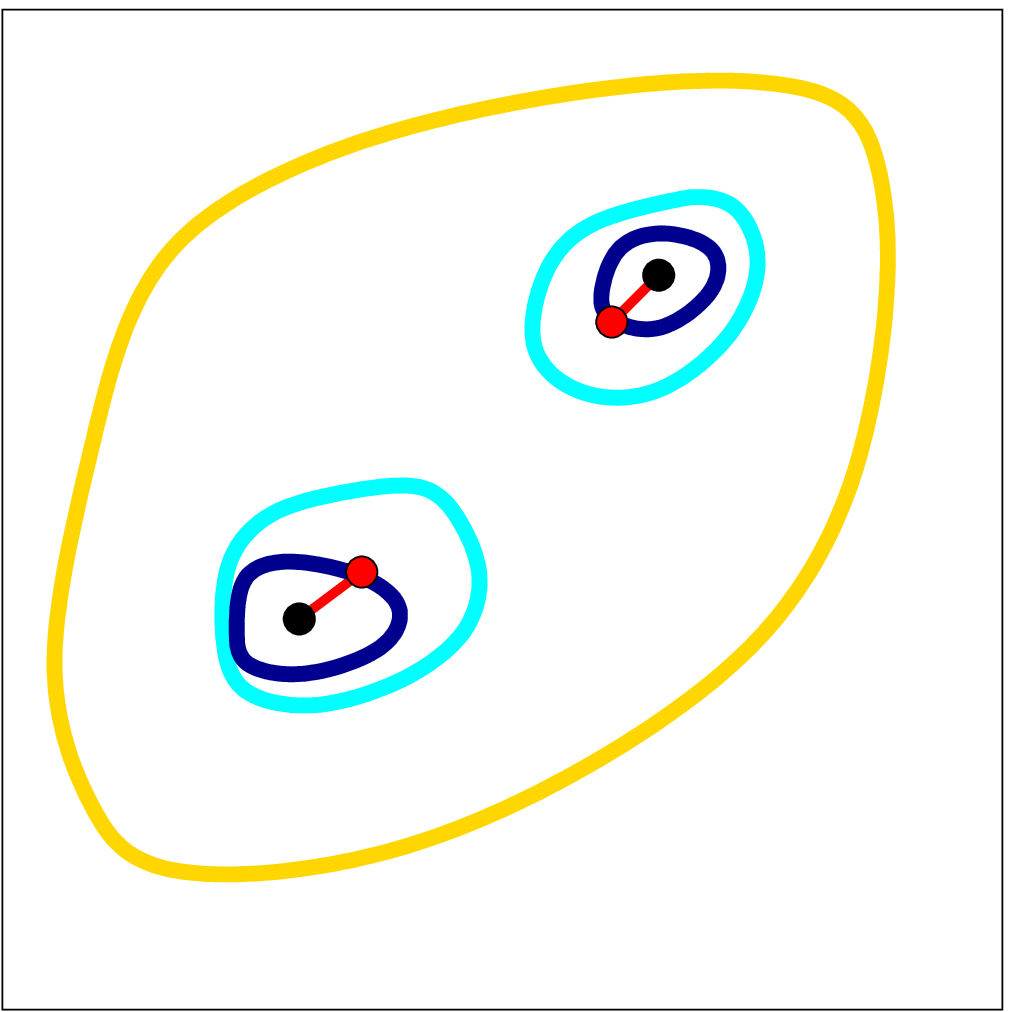}\tabularnewline
\end{tabular} & \begin{tabular}{c}
\includegraphics[scale=0.3]{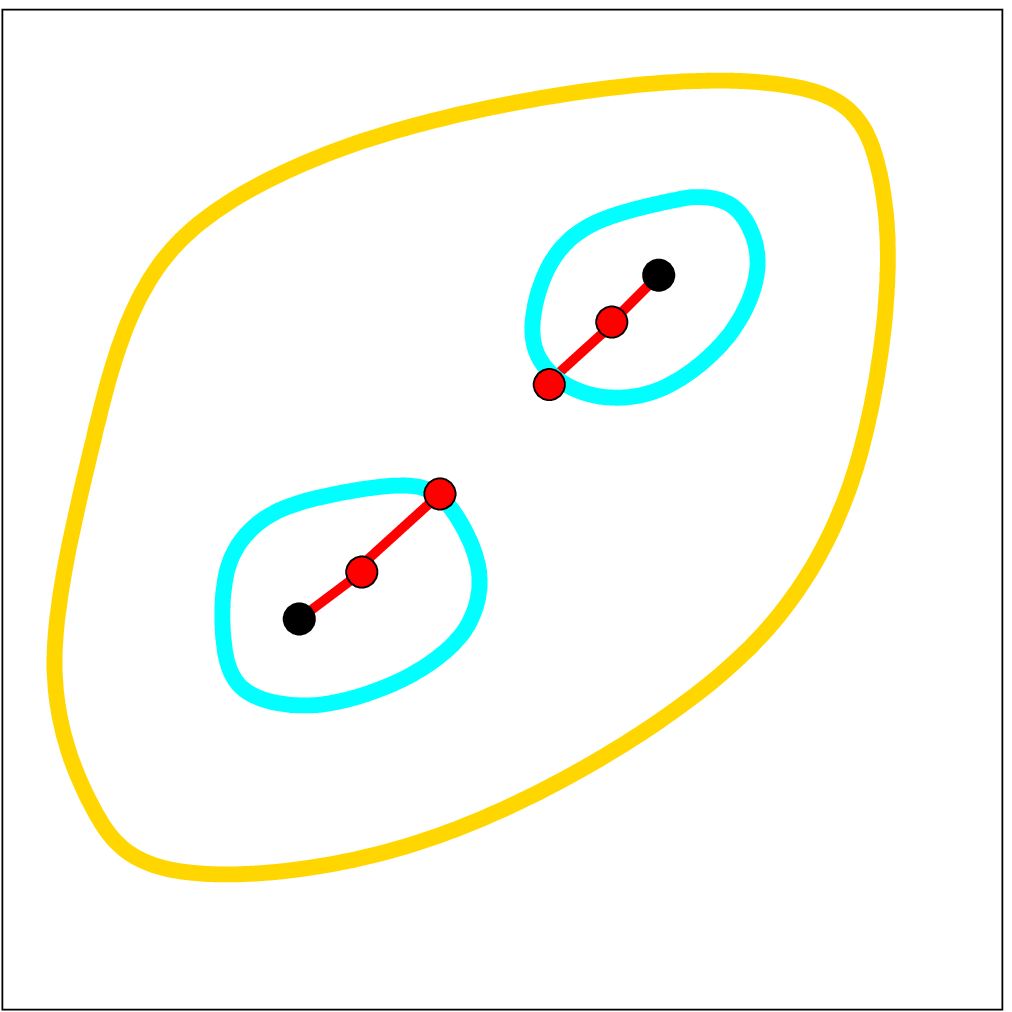}\tabularnewline
\end{tabular}\tabularnewline
\hline
1 component & \begin{tabular}{c}
\includegraphics[scale=0.3]{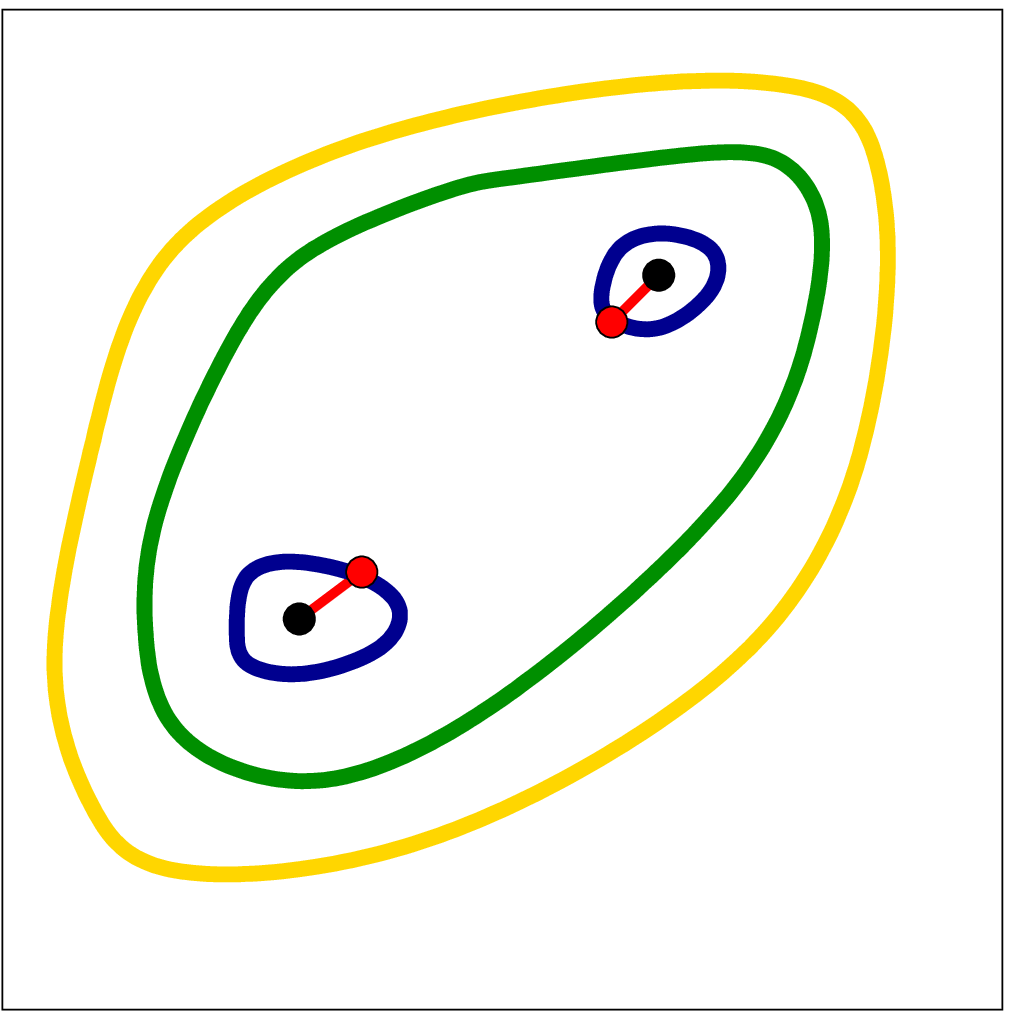}\tabularnewline
\end{tabular} & \begin{tabular}{c}
\includegraphics[scale=0.3]{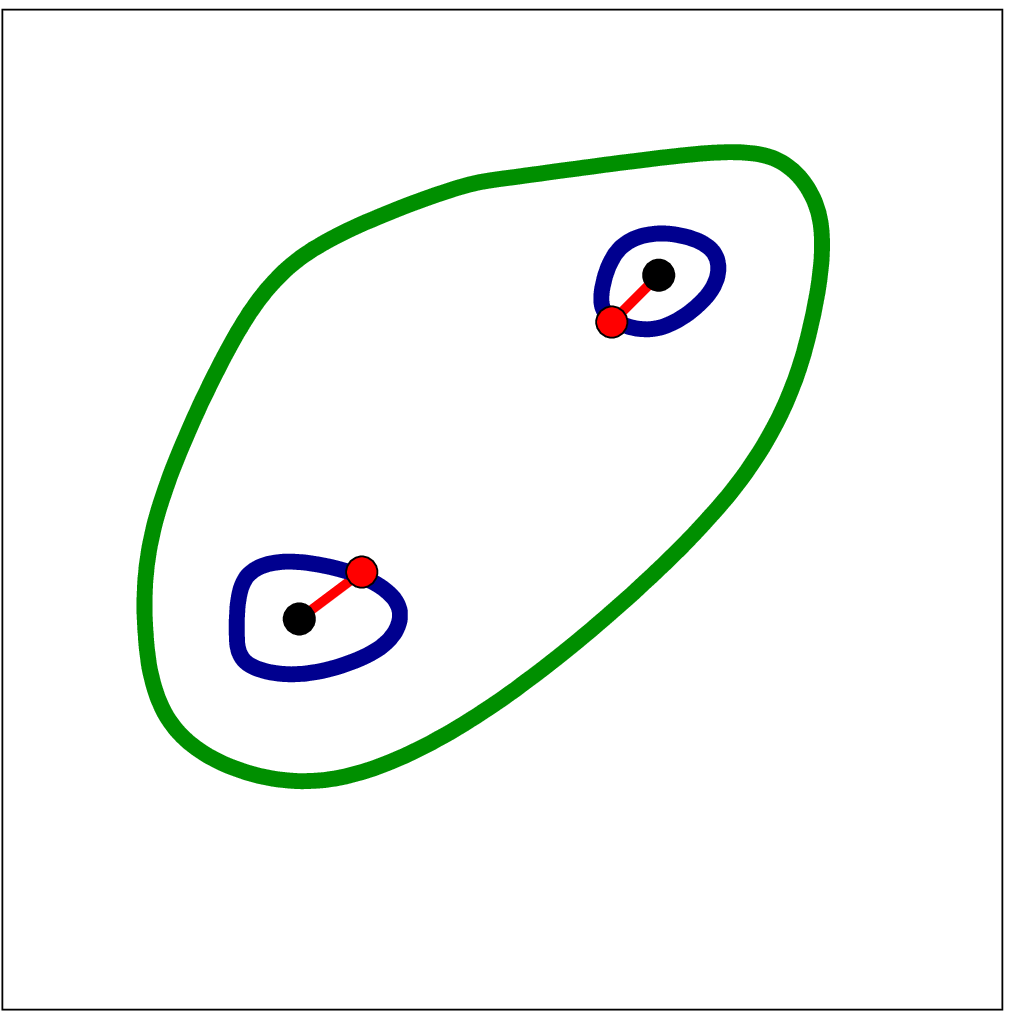}\tabularnewline
\end{tabular}\tabularnewline
\hline
\end{longtable}\caption{\label{fig:Illustrate-global}Illustration of Algorithm \ref{alg:globally-convergent-MPT}.}

\end{figure}

 To start the algorithm, an upper bound $u$ can be taken to be
the maximum of any path from $x_{0}$ to $y_{0}$, while a lower bound
can be the maximum of $f(x_{0})$ and $f(y_{0})$. In fact, in step
(3), we may update the upper bound $u$ to be the maximum along the
line segment joining $x_{i}$ and $y_{i}$ if it is a better upper
bound.

In practice, one need not solve subproblem \eqref{eq:optimization}
in step 2 too accurately, as it might be more profitable to move on
to step 3.  While theory demands the global optimizers for subproblem
\eqref{eq:optimization}, an implementation of Algorithm \ref{alg:globally-convergent-MPT}
can only find local optimizers, which is not sufficient for the global
mountain pass problem, but can be successful for the purpose of finding
saddle points. The optimality conditions in terms of gradients (or
generalized gradients) can be helpful for characterizing local optimality
(see Section \ref{sec:optim-conditions}). Notice that the saddle
point property is local. If $x_{i}$ and $y_{i}$ converge to a common
limit, then it is clear from the definitions that the common limit
is a saddle point.

Another issue with subproblem \eqref{eq:optimization} in step 2 is
that minimizers may not exist. For example, the sets $S_{1}$ and
$S_{2}$ may not be compact. We now discuss how convergence to a critical
point in Algorithm \ref{alg:globally-convergent-MPT} can fail in
the finite dimensional case. 

The Palais-Smale condition is important in nonlinear analysis, and
is often a necessary condition in the smooth and nonsmooth mountain
pass theorems and other critical point existence theorems. We refer
to \cite{MW89,Nir89,Rab86,Struwe00,Wil92} for more details. We recall
its definition.
\begin{defn}
Let $X$ be a Banach space and $f:X\rightarrow\mathbb{R}$ be a $\mathcal{C}^{1}$
functional. We say that a sequence $\left\{ x_{i}\right\} _{i=1}^{\infty}\subset X$
is a \emph{Palais-Smale sequence} if $\left\{ f(x_{i})\right\} _{i=1}^{\infty}$
is bounded and $f^{\prime}(x_{i})\rightarrow\mathbf{0}$, and $f$
satisfies the \emph{Palais-Smale condition} if any Palais-Smale sequence
admits a convergent subsequence.

For nonsmooth $f$, the condition $f^{\prime}(x_{i})\rightarrow\mathbf{0}$
is $\inf_{x_{i}^{*}\in\partial f(x_{i})}\left|x_{i}^{*}\right|\rightarrow0$
instead.
\end{defn}
In the absence of the Palais-Smale condition, Algorithm \ref{alg:globally-convergent-MPT}
may fail to converge because the sequence $\left\{ (x_{i},y_{i})\right\} _{i=1}^{\infty}$
need not have a limit point of the form $\left(\bar{z},\bar{z}\right)$,
or the sequence $\left\{ (x_{i},y_{i})\right\} _{i=1}^{\infty}$ need
not even exist. The examples below document the possibilities. 
\begin{example}
(a) Consider $f:\mathbb{R}^{2}\rightarrow\mathbb{R}$ defined by $f(x,y)=e^{-x}-y^{2}$.
Here, the distance between the two components of the level sets is
zero for all $\mbox{lev}_{\leq c}f$, where $c<0$, and $x_{i}$ and
$y_{i}$ do not exist. The sequence $\left\{ (i,0)\right\} _{i=1}^{\infty}$
is a Palais-Smale sequence but does not converge.

(b) For $f(x,y)=e^{-2x}-y^{2}e^{-x}$ , $x_{i}$ and $y_{i}$ exist,
but both $\left\{ x_{i}\right\} _{i=1}^{\infty}$ and $\left\{ y_{i}\right\} _{i=1}^{\infty}$
do not have finite limits. Again, $\left\{ (i,0)\right\} _{i=1}^{\infty}$
is a Palais-Smale sequence that does not converge.
\end{example}
It is possible that $\left\{ x_{i}\right\} _{i=1}^{\infty}$ and $\left\{ y_{i}\right\} _{i=1}^{\infty}$
have limit points but not a common limit point. To see this, consider
the example $f:\mathbb{R}\rightarrow\mathbb{R}$ defined by\[
f(x)=\left\{ \begin{array}{ll}
x & \mbox{ if }x\leq-1\\
-1 & \mbox{ if }-1\leq x\leq1\\
-x & \mbox{ if }x\geq1.\end{array}\right.\]
The set $\mbox{lev}_{\leq-1}f$ is path-connected, but the set $\mbox{cl}(\mbox{lev}_{<-1}f)$
is not path-connected. Any point in the set $(\mbox{lev}_{\leq-1}f)\backslash\mbox{cl}(\mbox{lev}_{<-1}f)=(-1,1)$
is a local minimum, and hence a critical point.

\section{\label{sec:locally-superlinearly-convergent}A locally superlinearly
convergent algorithm}

In this section, we propose a locally superlinearly convergent algorithm
for the mountain pass problem for smooth critical points in $\mathbb{R}^{n}$.
For this section, we take $X=\mathbb{R}^{n}$. Like Algorithm \ref{alg:globally-convergent-MPT}
earlier, we keep track of only two points in the space $\mathbb{R}^{n}$
instead of a path. Our fast locally convergent algorithm does not
require one to calculate the Hessian. Furthermore, we maintain upper
and lower bounds that converge superlinearly to the critical value.
The numerical performance of this method will be illustrated in Section
\ref{sec:Wilkinson-2}.

In Algorithm \ref{alg:(Mountain-pass-1)} below, we can assume that
the endpoints $x_{0}$ and $y_{0}$ satisfy $f(x_{0})=f(y_{0})$.
Otherwise, if $f(x_{0})<f(y_{0})$ say, replace $x_{0}$ by the point
$x_{0}^{\prime}$ closest to $x_{0}$ on the line segment $[x_{0},y_{0}]$
such that $f(x_{0}^{\prime})=f(y_{0})$. 
\begin{algorithm}
(Fast local level set algorithm) \label{alg:(Mountain-pass-1)} Find
saddle point between points $x_{0}$ and $y_{0}$ for $f:\mathbb{R}^{n}\rightarrow\mathbb{R}$.
Assume that the objective of the mountain pass problem between $x_{0}$
and $y_{0}$ is greater than $f(x_{0})$, and $f(x_{0})=f(y_{0})$.
Let $U$ be a convex set containing $x_{0}$ and $y_{0}$.\end{algorithm}
\begin{enumerate}
\item Given points $x_{i}$ and $y_{i}$, find $z_{i}$ as follows:

\begin{enumerate}
\item Replace $x_{i}$ and $y_{i}$ by $\tilde{x}_{i}$ and $\tilde{y}_{i}$,
where $\tilde{x}_{i}$ and $\tilde{y}_{i}$ are minimizers of the
problem\begin{eqnarray*}
 & \min_{x,y} & \left|x-y\right|\\
 & \mbox{s.t.} & x\mbox{ in same component as }x_{i}\mbox{ in }(\lev_{\leq f(x_{i})}f)\cap U\\
 &  & y\mbox{ in same component as }y_{i}\mbox{ in }(\lev_{\leq f(x_{i})}f)\cap U\end{eqnarray*}

\item Find a minimizer of $f$ on $L_{i}\cap U$, say $z_{i}$. Here $L_{i}$
is the affine space orthogonal to $x_{i}-y_{i}$ passing through $\frac{1}{2}(x_{i}+y_{i})$.

\end{enumerate}
\item Find the point furthest away from $x_{i}$ on the line segment $[x_{i},z_{i}]$,
which we call $x_{i+1}$, such that $f(x)\leq f(z_{i})$ for all $x$
in the line segment $[x_{i},x_{i+1}]$. Do the same to find $y_{i+1}$.
\item Increase $i$, repeat steps 1 and 2 until $\left|x_{i}-y_{i}\right|$
is small, or if the value $M_{i}-f(z_{i})$, where $M_{i}:=\max_{x\in[x_{i},y_{i}]}f(x),$
is small.
\item If an actual path is desired, take a path $p_{i}:[0,1]\rightarrow X$
lying in $\lev_{\leq M_{i}}f$ connecting the points\[
x_{0},x_{1},\dots,x_{i-2},x_{i-1},x_{i},y_{i},y_{i-1},y_{i-2},\dots,y_{1},y_{0}.\]

\end{enumerate}
As we will see in Propositions \ref{pro:alg-prelim} and \ref{pro:2-convex-sets},
a unique minimizing pair $(\tilde{x}_{i},\tilde{y}_{i})$ in step
1(a) exists under added conditions. Furthermore, Proposition \ref{eq:alpha-formula}
implies that a unique minimizer of $f$ on $L_{i}\cap U$ exists under
added conditions in step 1(b).

To motivate step 1(b), consider any path from $x_{i}$ to $y_{i}$
in $U$ that lies wholly in $U$. Such a path has to pass through
some point of $L_{i}\cap U$, so the maximum value of $f$ on the
path is at least the minimum of $f$ on $L_{i}\cap U$. 

Step 1(a) is analogous to step 2 of Algorithm \ref{alg:globally-convergent-MPT}.
Algorithm \ref{alg:(Mountain-pass-1)} can be seen as an improvement
Algorithm \ref{alg:globally-convergent-MPT}: The bisection algorithm
in Algorithm \ref{alg:globally-convergent-MPT} gives us a reliable
way of finding the critical point, and step 1(b) in Algorithm \ref{alg:(Mountain-pass-1)}
reduces the distance between the components of the level sets as fast
as possible.

In practice, step 1(a) is difficult, and is performed only when the
algorithm runs into difficulties. In fact, this step was not performed
in our numerical experiments in Section \ref{sec:Wilkinson-2}. However,
we can construct simple functions for which the affine space $L_{i}$
does not separate the two components containing $x_{i}$ and $y_{i}$
in $(\lev_{\leq f(x_{i})}f)\cap U$ in step 1(b) if step 1(a) were
not performed. 

In the minimum distance problem in step 1(a), notice that if $f$
is $\mathcal{C}^{1}$ and the gradients of $f$ at a pair of points
are nonzero and do not point in opposite directions, then in principle
we can perturb the points along paths that decrease the distance between
them while not increasing their function values. Of course, a good
approximation of a minimizing pair may be hard to compute in practice:
existing path-based algorithms for finding mountain passes face analogous
computational challenges. One may employ the heuristic in Remark \ref{rem:heuristic-closest-point}
for this problem.

In step 2, continuity of $f$ and $p$ tells us that $f(x_{i+1})=f(z_{i})$.
We shall see in Theorem \ref{thm:superline-conv} that under added
conditions, $\{f(x_{i})\}_{i}$ is an increasing sequence that converges
to the critical value $f(\bar{x})$. Furthermore, Propositions \ref{pro:alpha-bound}
and \ref{pro:upper-bound-better} state that under added conditions,
$\{M_{i}\}_{i}$ are upper bounds on $f(\bar{x})$ that converge R-superlinearly
to $f(\bar{x})$, where R-superlinear convergence is defined as follows.
\begin{defn}
A sequence in $\mathbb{R}$ converges \emph{R-superlinearly }to zero
if its absolute value is bounded by a superlinearly convergent sequence.

\end{defn}

\section{\label{sec:Proof-of-superlinear}Superlinear convergence of the local
algorithm}

When $f:\mathbb{R}^{n}\rightarrow\mathbb{R}$ is a quadratic whose
Hessian has one negative eigenvalue and $n-1$ positive eigenvalues,
Algorithm \ref{alg:(Mountain-pass-1)} converges to the critical point
in one step. One might expect that if $f$ is $\mathcal{C}^{2}$,
then Algorithm \ref{alg:(Mountain-pass-1)} converges quickly. In
this section, we will prove Theorem \ref{thm:superline-conv} on the
superlinear convergence of Algorithm \ref{alg:(Mountain-pass-1)}. 

Recall that the \emph{Morse index} of a critical point is the maximum
dimension of a subspace on which the Hessian is negative definite,
and a critical point is \emph{nondegenerate} if its Hessian is invertible,
and degenerate otherwise. In the smooth finite dimensional case, the
Morse index equals the number of negative eigenvalues of the Hessian.
If a function $f:\mathbb{R}^{n}\rightarrow\mathbb{R}$ is $\mathcal{C}^{2}$
in a neighborhood of a nondegenerate critical point $\bar{x}$ of
Morse index 1, we can readily make the following assumptions.
\begin{assumption}
\label{ass:simplify-quad}Assume that $\bar{x}=\mathbf{0}$ and $f(\mathbf{0})=0$,
and the Hessian $H=H(\mathbf{0})$ is a diagonal matrix with entries
$a_{1},a_{2},\dots,a_{n-1},a_{n}$ in decreasing order, of which $a_{n}$
is negative and $a_{n-1}$ is the smallest positive eigenvalue. 
\end{assumption}

Another assumption that we will use quite often in this section and
the next is on the local approximation of $f$ near $\mathbf{0}$.
\begin{assumption}
\label{ass:simplify-theta}For $\delta\in(0,\min\{a_{n-1},-a_{n}\})$,
assume $\theta>0$ is small enough so that \[
\left|f(x)-\sum_{j=1}^{n}a_{j}x^{2}(j)\right|\leq\delta\left|x\right|^{2}\mbox{ for all }x\in\mathbb{B}(\mathbf{0},\theta).\]

\end{assumption}
This particular choice of $\theta$ gives a region $\mathbb{B}(\mathbf{0},\theta)$
where Figure \ref{fig:Local-saddle} is valid. We shall use $\mathring{\mathbb{B}}$
to denote the open ball.

Here is our first result on step 1(a) of Algorithm \ref{alg:(Mountain-pass-1)}.
\begin{prop}
\label{pro:alg-prelim}Suppose that $f:\mathbb{R}^{n}\rightarrow\mathbb{R}$
is $\mathcal{C}^{2}$, and $\bar{x}$ is a nondegenerate critical
point of Morse index 1 such that $f(\bar{x})=c$. If $\theta>0$ is
sufficiently small, then for any $\epsilon>0$ (depending on $\theta$)
sufficiently small,
\begin{enumerate}
\item $(\lev_{\leq c-\epsilon}f)\cap\mathring{\mathbb{B}}(\bar{x},\theta)$
has exactly two path connected components, and 
\item There is a pair $(\tilde{x},\tilde{y})$, where $\tilde{x}$ and $\tilde{y}$
lie in distinct components of $(\lev_{\leq c-\epsilon}f)\cap\mathring{\mathbb{B}}(\bar{x},\theta)$,
such that $\left|\tilde{x}-\tilde{y}\right|$ is the distance between
the two components in $(\lev_{\leq c-\epsilon}f)\cap\mathring{\mathbb{B}}(\bar{x},\theta)$.
\end{enumerate}
\end{prop}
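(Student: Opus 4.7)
The plan is to leverage Assumptions \ref{ass:simplify-quad} and \ref{ass:simplify-theta}, sandwiching $f$ on $\mathbb{B}(\mathbf{0}, \theta)$ between the two quadratic forms $Q_{\pm}(x) := \sum_{j=1}^n a_j x(j)^2 \pm \delta |x|^2$, with $\delta$ chosen strictly between $0$ and $\min\{a_{n-1}, -a_n\}$. Throughout, write $x = (u, t) \in \mathbb{R}^{n-1} \times \mathbb{R}$, so that $t = x(n)$.

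For part (1), I would first establish that $\{t = 0\}$ separates the sublevel set. Using $\sum_{j<n} a_j x(j)^2 \geq a_{n-1} |u|^2$ in the lower sandwich gives
\[
f(u, t) \;\geq\; (a_{n-1} - \delta) |u|^2 + (a_n - \delta) t^2,
\]
so $f(u, t) \leq c - \epsilon = -\epsilon$ forces $t^2 \geq (\epsilon + (a_{n-1} - \delta) |u|^2)/(\delta - a_n) > 0$. The sublevel set therefore splits into the two disjoint open sets $A_{\pm} := \{(u, t) \in \mathring{\mathbb{B}}(\mathbf{0}, \theta) : f(u, t) \leq -\epsilon,\ \pm t > 0\}$. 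Both are nonempty for small $\epsilon$: the axial point $(\mathbf{0}, \pm t_*)$, with $t_* := \sqrt{\epsilon/(-a_n - \delta)}$, realizes $Q_+(\mathbf{0}, \pm t_*) = -\epsilon$ and hence lies in $A_\pm$ whenever $t_* < \theta$. To conclude that there are \emph{exactly} two components, I would show each $A_\pm$ is path-connected using the implicit function theorem at the critical point: since $\partial^2 f / \partial t^2(\mathbf{0}) = 2 a_n \neq 0$, the equation $\partial f/\partial t = 0$ defines, on a neighborhood of $\mathbf{0}$, a $\mathcal{C}^1$ graph $t = g(u)$; shrinking the neighborhood, $\partial f/\partial t$ has one sign on $\{t > g(u)\}$ and the opposite on $\{t < g(u)\}$, so $t \mapsto f(u, t)$ is strictly unimodal about $g(u)$. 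Consequently, for each $u$ in the relevant base the slice $\{t > 0 : f(u, t) \leq -\epsilon,\ (u, t) \in \mathring{\mathbb{B}}(\mathbf{0}, \theta)\}$ is an interval, and fibering $A_+$ over this open connected base yields path-connectedness; the same argument handles $A_-$.

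For part (2), I would control the location of any minimizing pair. The inequality $|x - y| \geq |x(n) - y(n)| = t_x + |t_y|$ for $(x, y) \in A_+ \times A_-$, combined with $d := \inf_{A_+ \times A_-} |x - y| \leq |(\mathbf{0}, t_*) - (\mathbf{0}, -t_*)| = 2 t_*$, gives $d = O(\sqrt{\epsilon})$. The refined separation $t^2 \geq (\epsilon + (a_{n-1} - \delta) |u|^2)/(\delta - a_n)$ shows that $|u|$ large forces $|t|$ large and hence $|x - y| \geq t_x + |t_y|$ large; tracking constants, any pair with $|x - y| \leq d$ has $|u_x|, |u_y|, t_x, |t_y|$ all of order $O(\sqrt{\epsilon})$, so every near-minimizer sits in a ball of radius $O(\sqrt{\epsilon})$ around $\mathbf{0}$, well inside $\mathring{\mathbb{B}}(\mathbf{0}, \theta)$ for $\epsilon$ sufficiently small. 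Given this a priori bound, any minimizing sequence has a subsequential limit $(\tilde{x}, \tilde{y})$ in the open ball; closedness of $\lev_{\leq -\epsilon} f$ together with the strict lower bound $t^2 \geq \epsilon/(\delta - a_n) > 0$ pins $\tilde{x} \in A_+$ and $\tilde{y} \in A_-$, and continuity of the distance gives $|\tilde{x} - \tilde{y}| = d$.

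The main obstacle is the path-connectedness step in part (1), because the quadratic sandwich on its own is too coarse for naive straight-line homotopies to remain in the sublevel set. The implicit function theorem applied to $\partial f/\partial t$ at the critical point is what supplies the structural decomposition of each $A_\pm$ as an interval bundle over a connected base, and this is the key ingredient that bridges the gap.
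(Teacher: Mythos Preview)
Your overall framework matches the paper's: the quadratic sandwich via Assumptions \ref{ass:simplify-quad}--\ref{ass:simplify-theta}, separation of the sublevel set by the hyperplane $\{t=0\}$, and the $O(\sqrt{\epsilon})$ localization for part (2) are all essentially what the paper does (the paper packages the localization as ``the minimizers must lie in the cylinders $C_1$, $C_2$'', but the content is the same). Part (2) is fine.

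The genuine gap is in your path-connectedness argument for $A_\pm$. You fiber $A_+$ over the $u$-variable, use concavity of $f(u,\cdot)$ to see that each $t$-fiber is an interval, and then assert that ``fibering $A_+$ over this open connected base yields path-connectedness''. But you never argue that the base---the projection of $A_+$ to $\mathbb{R}^{n-1}$---is connected, and this is not automatic. Unwinding your own conditions, that base works out to be $\{u:|u|<\theta,\ f(u,\sqrt{\theta^2-|u|^2})<-\epsilon\}$, a strict sublevel set of $f$ restricted to the upper hemisphere of $\partial\mathbb{B}(\mathbf{0},\theta)$; nothing in your hypotheses forces this to be connected. Your implicit-function-theorem step only controls the $t$-direction and provides no mechanism for moving between different $u$-fibers.

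The paper supplies exactly the missing transverse ingredient: it also shrinks $\theta$ so that the top-left $(n-1)\times(n-1)$ block of the Hessian stays positive definite on $\mathbb{B}(\mathbf{0},\theta)$, making $f(\cdot,t)$ strictly convex on every horizontal slice. With that in hand it effectively fibers the \emph{other} way: the $u$-slices of $A_\pm$ are convex, the axis segment $\{\mathbf{0}\}\times(-\theta,-c]$ sits inside $A_-$, and Lemma \ref{lem:Can-move-left} (the sign of $\partial f/\partial t$, which is precisely the content of your IFT step) lets any point be pushed in $t$ to a level where the horizontal segment to the axis remains in $A_-$. Since the $t$-base is one-dimensional, its connectedness is trivial. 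Adding the convexity-in-$u$ ingredient, or switching to the paper's fibering over $t$, would close the gap.
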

\begin{proof}
Suppose that Assumption \ref{ass:simplify-quad} holds. Choose some
$\delta\in(0,\min\{a_{n-1},-a_{n}\})$ and a corresponding $\theta>0$
such that Assumption \ref{ass:simplify-theta} holds. A simple bound
on $f(x)$ on $\mathbb{B}(\mathbf{0},\theta)$ is therefore:\begin{equation}
\sum_{j=1}^{n}(a_{j}-\delta)x^{2}(j)\leq f(x)\leq\sum_{j=1}^{n}(a_{j}+\delta)x^{2}(j).\label{eq:approx-label}\end{equation}
So if $\epsilon$ is small enough, the level set $S:=\lev_{\leq-\epsilon}f$
satisfies \[
S_{+}\cap\mathbb{B}(\mathbf{0},\theta)\subset S\cap\mathbb{B}(\mathbf{0},\theta)\subset S_{-}\cap\mathbb{B}(\mathbf{0},\theta),\]
where \begin{eqnarray*}
S_{+} & := & \left\{ x\mid\sum_{j=1}^{n}(a_{j}+\delta)x^{2}(j)\leq-\epsilon\right\} ,\\
S_{-} & := & \left\{ x\mid\sum_{j=1}^{n}(a_{j}-\delta)x^{2}(j)\leq-\epsilon\right\} ,\end{eqnarray*}
 and $S_{+}\cap\mathbb{B}(\mathbf{0},\theta)$ is nonempty. Figure
\ref{fig:Local-saddle} shows a two-dimensional cross section of the
sets $S_{+}$ and $S_{-}$ through the critical point $\mathbf{0}$
and the closest points between components in $S_{+}$ and $S_{-}$. 

\begin{figure}
\includegraphics[scale=0.5]{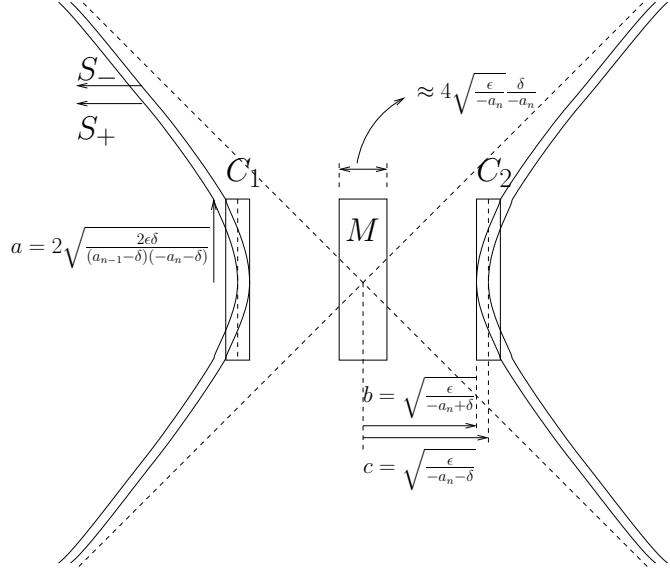}\caption{\label{fig:Local-saddle}Local structure of saddle point.}

\end{figure}

\textbf{Step 1: Calculate variables in Figure \ref{fig:Local-saddle}.}

The two points in distinct components of $S_{+}$ closest to each
other are the points $\left(\mathbf{0},\pm\sqrt{\frac{\epsilon}{-a_{n}-\delta}}\right)$,
and one easily calculates the values of $b$ and $c$ (which are the
distances between $\mathbf{0}$ and $S_{-}$, and that of $\mathbf{0}$
and $S_{+}$ respectively) in the diagram to be $\sqrt{\frac{\epsilon}{-a_{n}+\delta}}$
and $\sqrt{\frac{\epsilon}{-a_{n}-\delta}}$. Thus the distance between
the two components of $S$ is at most $2\sqrt{\frac{\epsilon}{-a_{n}-\delta}}$.
The points in $S$ that minimize the distance between the components
must lie in two cylinders $C_{1}$ and $C_{2}$ defined by \begin{eqnarray}
C_{1} & := & \mathbb{B}^{n-1}(\mathbf{0},a)\times\left[b-2c,-b\right]\subset\mathbb{R}^{n-1}\times\mathbb{R},\nonumber \\
C_{2} & := & \mathbb{B}^{n-1}(\mathbf{0},a)\times\left[b,2c-b\right]\subset\mathbb{R}^{n-1}\times\mathbb{R},\label{eq:The-cylinders}\end{eqnarray}
for some $a>0$. In other words, $C_{1}$ and $C_{2}$ are cylinders
with spherical base of radius $a$ such that \[
(S_{-}\backslash S_{+})\cap\left(\mathbb{R}^{n-1}\times\left[b-2c,2c-b\right]\right)\cap\mathbb{B}(\mathbf{0},\theta)\subset C_{1}\cup C_{2}.\]
They are represented as the left and right rectangles in Figure \ref{fig:Local-saddle}. 

We now find a value of $a$. We can let $x(n)=2c-b$, and we need\begin{eqnarray*}
\sum_{j=1}^{n-1}(a_{j}-\delta)x^{2}(j)+(a_{n}-\delta)x^{2}(n) & \leq & -\epsilon\\
\Rightarrow\sum_{j=1}^{n-1}(a_{j}-\delta)x^{2}(j)+(a_{n}-\delta)\left(2\sqrt{\frac{\epsilon}{-a_{n}-\delta}}-\sqrt{\frac{\epsilon}{-a_{n}+\delta}}\right)^{2} & \leq & -\epsilon.\end{eqnarray*}
Continuing the arithmetic gives

\begin{eqnarray*}
 &  & \sum_{j=1}^{n-1}(a_{j}-\delta)x^{2}(j)\\
 & \leq & \epsilon\left(-1-(a_{n}-\delta)\left(\frac{4}{-a_{n}-\delta}+\frac{1}{-a_{n}+\delta}-\frac{4}{\sqrt{-a_{n}-\delta}\sqrt{-a_{n}+\delta}}\right)\right)\\
 & \leq & \epsilon\left(-1-(a_{n}-\delta)\left(\frac{4}{-a_{n}-\delta}+\frac{1}{-a_{n}+\delta}-\frac{4}{-a_{n}+\delta}\right)\right)\\
 & = & \frac{8\epsilon\delta}{-a_{n}-\delta}.\end{eqnarray*}
The radius is maximized when $x(1)=x(2)=\cdots=x(n-2)=0$ and $x(n-1)=2\sqrt{\frac{2\epsilon\delta}{(a_{n-1}-\delta)(-a_{n}-\delta)}}$,
which gives our value of $a$.

\textbf{Step 2: }$(\lev_{\leq-\epsilon}f)\cap\mathring{\mathbb{B}}(\mathbf{0},\theta)$\textbf{
has exactly two components if $\epsilon$ is small enough.}

Note that $(\lev_{\leq-\epsilon}f)\cap\mathbb{B}(\mathbf{0},\theta)$
does not intersect the subspace $L^{\prime}:=\left\{ x\mid x(n)=0\right\} $,
since $f(x)\geq0$ for all $x\in L^{\prime}\cap\mathbb{B}(\mathbf{0},\theta)$.
We proceed to show that \[
U_{<}:=\left\{ x\mid x(n)<0\right\} \cap\mathring{\mathbb{B}}(\mathbf{0},\theta)\]
 contains exactly one path connected component if $\epsilon$ is small
enough. A similar statement for $U_{>}$ defined in a similar way
will allow us to conclude that $(\lev_{\leq-\epsilon}f)\cap\mathring{\mathbb{B}}(\mathbf{0},\theta)$
has exactly two components.

Consider two points $v_{1},v_{2}$ in $(\lev_{\leq-\epsilon}f)\cap U_{<}$.
We want to find a path connecting $v_{1}$ and $v_{2}$ and contained
in $(\lev_{\leq-\epsilon}f)\cap U_{<}$. We may assume that $v_{1}(n)\leq v_{2}(n)<0$.
By the continuity of the Hessian, assume that $\theta$ is small enough
so that for all $x\in\mathbb{B}(\mathbf{0},\theta)$, the top left
principal submatrix of $H(x)$ corresponding to the first $n-1$ elements
is positive definite. Consider the subspace $L^{\prime}(\alpha):=\{x\mid x(n)=\alpha\}$.
The positive definiteness of the submatrix of $H(x)$ on $\mathbb{B}(\mathbf{0},\theta)$
tells us that $f$ is strictly convex on $\mathbb{B}(\mathbf{0},\theta)\cap L^{\prime}(\alpha)$.

If $v_{1}(n)=v_{2}(n)$, then the line segment connecting $v_{1}$
and $v_{2}$ lies in $(\lev_{\leq-\epsilon}f)\cap L^{\prime}(v_{1}(n))\cap\mathring{\mathbb{B}}(\mathbf{0},\theta)$
by the convexity of $f$ on $L^{\prime}(v_{1}(n))\cap\mathring{\mathbb{B}}(\mathbf{0},\theta)$.
Otherwise, assume that $v_{1}(n)<v_{2}(n)$. 

Here is a lemma that we will need for the proof.
\begin{lem}
\label{lem:Can-move-left}Suppose Assumption \ref{ass:simplify-quad}
holds. We can reduce $\theta>0$ and $\delta>0$ if necessary so that
Assumption \ref{ass:simplify-theta} is satisfied, and the $n$th
component of $\nabla f(x)$ is positive for all $x\in(\lev_{\leq0}f)\cap\mathbb{B}(\mathbf{0},\theta)\cap\{x\mid x(n)<0\}$.\end{lem}
\begin{proof}
We first define $\tilde{S}_{-}$ by \[
\tilde{S}_{-}:=\{x\mid(a_{n-1}-\delta)\sum_{j=1}^{n-1}x^{2}(j)+(a_{n}-\delta)x^{2}(n)\leq0\}.\]
It is clear that $(a_{n-1}-\delta)\sum_{j=1}^{n-1}x^{2}(j)+(a_{n}-\delta)x^{2}(n)\leq f(x)$
for all $x\in\mathbb{B}(\mathbf{0},\theta)$, so $(\lev_{\leq0}f)\cap\mathbb{B}(\mathbf{0},\theta)\subset\tilde{S}_{-}\cap\mathbb{B}(\mathbf{0},\theta)$.

We now use the expansion $\nabla f(x)=H(\mathbf{0})x+o(\left|x\right|)$,
and prove that the $n$th component of $\nabla f(x)$ is negative
for all $x\in\tilde{S}_{-}\cap\mathbb{B}(\mathbf{0},\theta)\cap\{x\mid x(n)<0\}$.
We can reduce $\theta$ so that $\left|\nabla f(x)-H(\mathbf{0})x\right|<\delta\left|x\right|$
for all $x\in\mathbb{B}(\mathbf{0},\theta)$. Note that if $x\in\tilde{S}_{-}$,
then \begin{eqnarray*}
(a_{n-1}-\delta)\sum_{j=1}^{n-1}x^{2}(j)+(a_{n}-\delta)x^{2}(n) & \leq & 0\\
\Rightarrow(a_{n-1}-\delta)\left|x\right|^{2}+(a_{n}-a_{n-1})x^{2}(n) & \leq & 0\\
\Rightarrow\left|x\right| & \leq & \sqrt{\frac{a_{n-1}-a_{n}}{a_{n-1}-\delta}}\left(-x(n)\right).\end{eqnarray*}
The $n$th component of $\nabla f(x)$ is bounded from below by\[
a_{n}x(n)-\delta\left|x\right|\leq a_{n}x(n)+\delta\sqrt{\frac{a_{n-1}-a_{n}}{a_{n-1}-\delta}}x(n).\]
Provided that $\delta$ is small enough, the term above is positive
since $x(n)<0$.
\end{proof}
We now return to show that there is a path connecting $v_{1}$ and
$v_{2}$. Note that $S_{+}\cap\mathring{\mathbb{B}}(\mathbf{0},\theta)\cap\{x\mid x(n)<0\}$
is a convex set. (To see this, note that $S_{+}\cap\{x\mid x(n)<0\}$
can be rotated so that it is the epigraph of a convex function.) Since
$S_{+}\cap\mathring{\mathbb{B}}(\mathbf{0},\theta)\subset(\lev_{\leq-\epsilon}f)\cap\mathring{\mathbb{B}}(\mathbf{0},\theta)$,
the open line segment connecting the points $(\mathbf{0},-\theta),(\mathbf{0},-c)\in\mathbb{R}^{n-1}\times\mathbb{R}$
lies in $(\lev_{\leq-\epsilon}f)\cap\mathring{\mathbb{B}}(\mathbf{0},\theta)$.
If $-\theta<v_{1}(n)<v_{2}(n)\leq-c$, the piecewise linear path connecting
$v_{2}$ to $(\mathbf{0},v_{2}(n))$ to $(\mathbf{0},v_{1}(n))$ to
$v_{1}$ lies in $(\lev_{\leq-\epsilon}f)\cap\mathring{\mathbb{B}}(\mathbf{0},\theta)$.

In the case when $v_{2}(n)>-c$, we see that $v_{2}$ must lie in
$C_{1}$. Lemma \ref{lem:Can-move-left} tells us that the line segment
joining $v_{2}$ and $v_{2}+(\mathbf{0},-c-v_{2}(n))$ lies in $(\lev_{\leq-\epsilon}f)\cap\mathring{\mathbb{B}}(\mathbf{0},\theta)$.
This allows us to find a path connecting $v_{2}$ to $v_{1}$.

\textbf{Step 3: $\tilde{x}$ and $\tilde{y}$ lie in }$\mathring{\mathbb{B}}(\mathbf{0},\theta)$\textbf{.}

The points $\tilde{x}$ and $\tilde{y}$ must lie in $C_{1}$ and
$C_{2}$ respectively, and both $C_{1}$ and $C_{2}$ lie in $\mathring{\mathbb{B}}(\mathbf{0},\theta)$
if $\epsilon$ is small enough. Therefore, we can minimize over the
compact sets $(\lev_{\leq-\epsilon}f)\cap C_{1}$ and $(\lev_{\leq-\epsilon}f)\cap C_{2}$,
which tells us that a minimizing pair $(\tilde{x},\tilde{y})$ exist.
\end{proof}
In fact, under the assumptions of Proposition \ref{pro:alg-prelim},
$\tilde{x}$ and $\tilde{y}$ are unique, but all we need in the proof
of Proposition \ref{pro:alpha-bound} below is that $\tilde{x}$ and
$\tilde{y}$ lie in the sets $C_{1}$ and $C_{2}$ defined by \eqref{eq:The-cylinders}
respectively ans represented as rectangles in Figure \ref{fig:Local-saddle}.
We defer the proof of uniqueness to Proposition \ref{pro:2-convex-sets}.

Our next result is on a bound for possible locations of $z_{i}$ in
step 1(b). 
\begin{prop}
\label{pro:alpha-bound}Suppose that $f:\mathbb{R}^{n}\rightarrow\mathbb{R}$
is $\mathcal{C}^{2}$, and $\bar{x}$ is a nondegenerate critical
point of Morse index 1 such that $f(\bar{x})=c$. If $\theta$ is
small enough, then for all small $\epsilon>0$ (depending on $\theta$),
\begin{enumerate}
\item [(1)] Two closest points of the two components of $(\lev_{\leq c-\epsilon}f)\cap\mathring{\mathbb{B}}(\bar{x},\theta)$,
say $\tilde{x}$ and $\tilde{y}$, exist, 
\item [(2)] For any such points $\tilde{x}$ and $\tilde{y}$,  $f$ is
strictly convex on $L\cap\mathring{\mathbb{B}}(\bar{x},\theta)$,
where $L$ is the orthogonal bisector of $\tilde{x}$ and $\tilde{y}$,
and 
\item [(3)] $f$ has a unique minimizer on $L\cap\mathring{\mathbb{B}}(\bar{x},\theta)$.
Furthermore, $\min_{L\cap\mathring{\mathbb{B}}(\mathbf{0},\theta)}f\leq f(\bar{x})\leq\max_{[\tilde{x},\tilde{y}]}f$.
\end{enumerate}
\end{prop}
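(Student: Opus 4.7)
The plan is to dispatch each of the three parts, leaning heavily on Assumptions \ref{ass:simplify-quad}, \ref{ass:simplify-theta}, the bound \eqref{eq:approx-label}, and the geometry uncovered in Proposition \ref{pro:alg-prelim}. I will fix $\bar x=\mathbf 0$, $c=0$, and will pick $\delta$ first (small enough for what follows), then $\theta$ so that Assumption \ref{ass:simplify-theta} and the Hessian-continuity statement from Lemma \ref{lem:Can-move-left} hold, and only then let $\epsilon\downarrow 0$.

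Part (1) is immediate from Proposition \ref{pro:alg-prelim}(2). For part (2), the key geometric observation is that the direction of $\tilde x-\tilde y$ is almost $e_n$: by Proposition \ref{pro:alg-prelim} we have $\tilde x\in C_1$, $\tilde y\in C_2$, so the first $n-1$ components of $\tilde x-\tilde y$ are bounded by $2a$ while the $n$th component satisfies $|\tilde y_n-\tilde x_n|\geq 2b$. Using the explicit formulas $a=2\sqrt{2\epsilon\delta/((a_{n-1}-\delta)(-a_n-\delta))}$ and $b=\sqrt{\epsilon/(-a_n+\delta)}$, the ratio $a/b$ is small provided $\delta$ is small, so $L$ is uniformly close, in the Grassmannian sense, to $L':=\{x\mid x(n)=0\}$. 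Since the restriction of $H(\mathbf 0)$ to $L'$ is diagonal with eigenvalues $a_1,\dots,a_{n-1}$, all at least $a_{n-1}>0$, continuity of the Hessian lets us shrink $\theta$ so that $\nabla^2 f(x)\bigl|_L$ is positive definite for every $x\in L\cap\mathring{\mathbb B}(\mathbf 0,\theta)$; strict convexity of $f$ on $L\cap\mathring{\mathbb B}(\mathbf 0,\theta)$ follows.

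For part (3), strict convexity plus compactness of the closure of $L\cap\mathbb B(\mathbf 0,\theta)$ yields a unique minimizer on the closure; because $L$ is close to $L'$ and $f|_{L'}$ attains its unique minimum $0$ at $\mathbf 0$, a standard perturbation argument (using \eqref{eq:approx-label} to compare with the quadratic on $L'$) places the minimizer in the interior when $\theta$ is fixed and $\epsilon$ is small. The right-hand inequality $f(\bar x)\leq\max_{[\tilde x,\tilde y]}f$ is easy: the segment crosses $L'$ at a point $p$ with $p(n)=0$, and the lower bound in \eqref{eq:approx-label} gives $f(p)\geq\sum_{j<n}(a_j-\delta)p(j)^2\geq 0=f(\bar x)$. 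For the left-hand inequality $\min_{L\cap\mathring{\mathbb B}}f\leq f(\bar x)$, I construct an explicit continuous path $\gamma$ from $\tilde x$ to $\tilde y$ lying in $(\lev_{\leq 0}f)\cap\mathring{\mathbb B}(\mathbf 0,\theta)$: first the segment in $L'(\tilde x_n)$ from $\tilde x$ to $(0,\dots,0,\tilde x_n)$ (stays in $\lev_{\leq 0}f$ by convexity of $f$ on the slice $L'(\tilde x_n)\cap\mathring{\mathbb B}$ and because both endpoints have $f\leq 0$, the latter via $(a_n+\delta)\tilde x_n^2\leq 0$), then the segment along the $e_n$-axis from $(0,\dots,0,\tilde x_n)$ through $\mathbf 0$ to $(0,\dots,0,\tilde y_n)$ on which the bound \eqref{eq:approx-label} keeps $f\leq 0$, then the analogous slice-convexity segment from $(0,\dots,0,\tilde y_n)$ to $\tilde y$. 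Since $L$ separates $\tilde x$ and $\tilde y$, the path $\gamma$ meets $L$ at some point $w$, and $f(w)\leq 0=f(\bar x)$.

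The main obstacle, and the step that needs the most care, is the combined bookkeeping: choosing $\delta$ small enough that (i) the direction of $\tilde x-\tilde y$ is within the tolerance needed to pass Hessian-positivity from $L'$ to $L$, (ii) the bounds from \eqref{eq:approx-label} are sharp enough that each leg of the path $\gamma$ lies in $\lev_{\leq 0}f\cap\mathring{\mathbb B}(\mathbf 0,\theta)$, and (iii) the unique minimizer on $L$ ends up in the interior of $\mathbb B(\mathbf 0,\theta)$ rather than on its boundary. Once these quantitative estimates are assembled, parts (2) and (3) follow from convexity and the simple path-crossing arguments sketched above.
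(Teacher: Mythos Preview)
Your proposal is correct and follows the same overall architecture as the paper: part (1) from Proposition \ref{pro:alg-prelim}; part (2) by observing that the normal $\tilde x-\tilde y$ is nearly $e_n$ (because $\tilde x\in C_1$, $\tilde y\in C_2$, and $a/b=O(\sqrt\delta)$), so the restricted Hessian inherits positive definiteness from $H(\mathbf 0)\vert_{L'}$; part (3) by exhibiting a point of $L\cap\mathring{\mathbb B}(\mathbf 0,\theta)$ with $f\leq 0$ and invoking strict convexity. The paper's Lemma \ref{pro:convex-concave} carries out your Hessian argument explicitly, and its right-hand inequality uses exactly your crossing-$L'$ observation.

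There are two places where your route diverges from the paper's. First, for $\min_{L\cap\mathring{\mathbb B}}f\leq 0$ you build a three-leg path in $\lev_{\leq 0}f$ and use that $L$ separates $\tilde x$ from $\tilde y$; the paper simply notes that $L$ meets the $e_n$-axis (where $f\leq 0$) inside the ball. Your argument is more elaborate but perhaps more self-contained, since it makes the containment in $\mathring{\mathbb B}(\mathbf 0,\theta)$ explicit. Second, and more substantively, for ``minimizer lands in the interior'' you invoke a soft perturbation argument ($L\to L'$ as $\epsilon\to 0$, minimizers converge). The paper instead proves a quantitative lemma (Lemma \ref{lem:alpha-step-1}) bounding $(\lev_{\leq 0}f)\cap L\cap\mathring{\mathbb B}(\mathbf 0,\theta)$ inside a box of size $\alpha=O(\delta\sqrt\epsilon)$. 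Your soft argument is enough to prove Proposition \ref{pro:alpha-bound} as stated, but be aware that the explicit $\alpha$-bound is not decorative: it is the engine of the superlinear-convergence estimate in Theorem \ref{thm:superline-conv} (the ratio $\rho(\delta)$ there comes directly from $(a_n-\delta)\alpha^2$). So while you have a valid and somewhat cleaner proof of this proposition, you have bypassed a computation that the paper needs downstream.
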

\begin{proof}
Suppose that Assumption \ref{ass:simplify-quad} holds, and choose
$\delta\in(0,\min\{a_{n-1},-a_{n}\})$. Suppose that $\theta>0$ is
small enough such that Assumption \ref{ass:simplify-theta} holds.
Throughout this proof, we assume all vectors accented with a hat  '$\wedge$'
are of Euclidean length 1. It is clear that $f(\tilde{x})=f(\tilde{y})=-\epsilon$.
Point (1) of the result comes from Proposition \ref{pro:alg-prelim}.
We first prove the following lemma. 
\begin{lem}
\label{lem:alpha-step-1}Suppose Assumptions \ref{ass:simplify-quad}
and \ref{ass:simplify-theta} hold. If $\theta>0$ is small enough,
then for all small $\epsilon>0$ (depending on $\theta$), two closest
points of the two components of $(\lev_{\leq-\epsilon}f)\cap\mathring{\mathbb{B}}(\mathbf{0},\theta)$,
say $\tilde{x}$ and $\tilde{y}$, exist. Let $L$ be the perpendicular
bisector of $\tilde{x}$ and $\tilde{y}$. Then  \begin{eqnarray*}
 &  & (\lev_{\le0}f)\cap L\cap\mathring{\mathbb{B}}(\mathbf{0},\theta)\subset\mathbb{B}^{n-1}\left(\mathbf{0},\alpha\sqrt{\frac{\left(-a_{n}+\delta\right)}{\left(a_{n-1}-\delta\right)}}\right)\times\left(-\alpha,\alpha\right),\\
 &  & \mbox{ where }\alpha=\delta\sqrt{\frac{\epsilon}{-a_{n}}}\left(\frac{8}{a_{n-1}}+\frac{2}{-a_{n}}\right)+o(\delta).\end{eqnarray*}
\end{lem}
\begin{proof}
By Proposition \ref{pro:alg-prelim}, the points $\tilde{x}$ and
$\tilde{y}$ must exist. We proceed to prove the rest of Lemma \ref{lem:alpha-step-1}.

\textbf{Step 1: Calculate remaining values in Figure \ref{fig:Local-saddle}.}

We calculated the values of $a$, $b$ and $c$ in step 2 of the proof
of Proposition \ref{pro:alg-prelim}, and we proceed to calculate
the rest of the variables in Figure \ref{fig:Local-saddle}. The middle
rectangle in Figure \ref{fig:Local-saddle} represents the possible
locations of midpoints of points in $C_{1}$ and $C_{2}$, and is
a cylinder as well. We call this set $M$. The radius of this cylinder
is the same as that of $C_{1}$ and $C_{2}$, and the width of this
cylinder is $4(c-b)$, which gives an $o(\delta)$ approximation\begin{eqnarray*}
4(c-b) & = & 4\left(\sqrt{\frac{\epsilon}{-a_{n}-\delta}}-\sqrt{\frac{\epsilon}{-a_{n}+\delta}}\right)\\
 & = & 4\sqrt{\frac{-a_{n}\epsilon}{(-a_{n}-\delta)(-a_{n}+\delta)}}\left(\sqrt{1+\frac{\delta}{-a_{n}}}-\sqrt{1-\frac{\delta}{-a_{n}}}\right)\\
 & = & 4\sqrt{\frac{\epsilon}{-a_{n}}}\left(\left(1+\frac{\delta}{-2a_{n}}\right)-\left(1-\frac{\delta}{-2a_{n}}\right)\right)+o(\delta)\\
 & = & 4\sqrt{\frac{\epsilon}{-a_{n}}}\frac{\delta}{-a_{n}}+o(\delta).\end{eqnarray*}
These calculations suffice for the calculations in step 2 of this
proof.

\textbf{Step 2: Set up optimization problem for bound on $(\lev_{\leq0}f)\cap L\cap\mathring{\mathbb{B}}(\mathbf{0},\theta)$.}

From the values of $a$ and $b$ calculated previously, we deduce
that a vector $c_{2}-c_{1}$, with $c_{i}\in C_{i}$, can be scaled
so that it is of the form $(\gamma\frac{a}{b}\hat{\mathbf{v}}_{1},1)$,
where $\hat{\mathbf{v}}_{1}\in\mathbb{R}^{n-1}$ is of norm $1$ and
$0\leq\gamma\leq1$. (i.e., the norm corresponding to the first $n-1$
coordinates is at most $\frac{a}{b}$.) These are possible normals
for $L$, the perpendicular bisector of $\tilde{x}$ and $\tilde{y}$.
The formula for $\frac{a}{b}$ is\begin{eqnarray*}
\frac{a}{b} & = & 2\sqrt{\frac{2\epsilon\delta}{(a_{n-1}-\delta)(-a_{n}-\delta)}}\div\sqrt{\frac{\epsilon}{-a_{n}+\delta}}\\
 & = & 2\sqrt{\frac{2\delta(-a_{n}+\delta)}{(a_{n-1}-\delta)(-a_{n}-\delta)}}.\end{eqnarray*}
So we can represent a normal of the affine space $L$ as \begin{equation}
\left(2\gamma_{1}\sqrt{\frac{2\delta(-a_{n}+\delta)}{(a_{n-1}-\delta)(-a_{n}-\delta)}}\hat{\mathbf{v}}_{1},1\right)\mbox{ for some }0\leq\gamma_{1}\leq1.\label{eq:normal-axis}\end{equation}
We now proceed to bound the minimum of $f$ on all possible perpendicular
bisectors of $c_{1}$ and $c_{2}$ within $\mathring{\mathbb{B}}(\mathbf{0},\theta)$,
where $c_{1}\in C_{1}$ and $c_{2}\in C_{2}$.  We find the largest
value of $\alpha$ such that 
\begin{itemize}
\item there is a point of the form $(\mathbf{v}_{2},\alpha)$ lying in $\tilde{S}_{-}$,
where \[
\tilde{S}_{-}:=\{x\mid(a_{n-1}-\delta)\sum_{j=1}^{n-1}x^{2}(j)+(a_{n}-\delta)x^{2}(n)\leq0\}\subset\mathbb{R}^{n-1}\times\mathbb{R}.\]

\item $\left(\mathbf{v}_{2},\alpha\right)\in\tilde{L}$ for some affine
space $\tilde{L}$ passing through a point $p\in M$ and having a
normal vector of the form in Formula \eqref{eq:normal-axis}.
\end{itemize}
The set $\tilde{S}_{-}$ is the same as that defined in the proof
of Lemma \ref{lem:Can-move-left}. Note that $\tilde{S}_{-}\cap\mathring{\mathbb{B}}(\mathbf{0},\theta)\supset(\lev_{\leq0}f)\cap\mathring{\mathbb{B}}(\mathbf{0},\theta)$,
and this largest value of $\alpha$ is an upper bound on the absolute
value of the $n$th coordinate of elements in $(\lev_{\leq0}f)\cap L\cap\mathring{\mathbb{B}}(\mathbf{0},\theta)$.

\textbf{Step 3: Solving for $\alpha$.}

For a point $(\mathbf{v}_{2},\alpha)\in\tilde{S}_{-}$, where $\mathbf{v}_{2}=(x(1),x\left(2\right),\dots,x(n-1))\in\mathbb{R}^{n-1}$,
we have\begin{eqnarray*}
(a_{n-1}-\delta)\sum_{j=1}^{n-1}x^{2}(j)+(a_{n}-\delta)\alpha^{2} & \leq & 0.\\
\Rightarrow\left|\mathbf{v}_{2}\right|^{2} & = & \sum_{j=1}^{n-1}x^{2}(j)\\
 & \leq & \frac{(-a_{n}+\delta)}{(a_{n-1}-\delta)}\alpha^{2}.\\
\Rightarrow\left|\mathbf{v}_{2}\right| & \leq & \sqrt{\frac{(-a_{n}+\delta)}{(a_{n-1}-\delta)}}\alpha.\end{eqnarray*}
Therefore, we can write $\left(\mathbf{v}_{2},\alpha\right)$ as\begin{equation}
\left(\gamma_{2}\sqrt{\frac{(-a_{n}+\delta)}{(a_{n-1}-\delta)}}\alpha\hat{\mathbf{v}}_{2},\alpha\right),\label{eq:max-alpha}\end{equation}
where $\hat{\mathbf{v}}_{2}\in\mathbb{R}^{n-1}$ is a vector of unit
norm, and $0\leq\gamma_{2}\le1$.  We can assume that $p$ has coordinates\[
\left(2\gamma_{3}\sqrt{\frac{2\epsilon\delta}{(a_{n-1}-\delta)(-a_{n}-\delta)}}\hat{\mathbf{v}}_{3},2\gamma_{4}\sqrt{\frac{\epsilon}{-a_{n}}}\frac{\delta}{-a_{n}}+o(\delta)\right),\]
where $\hat{\mathbf{v}}_{3}\in\mathbb{R}^{n-1}$ is some vector of
unit norm, and $0\leq\gamma_{3},\gamma_{4}\leq1$. Note that the $n$th
component is half the width of $M$. Hence a possible tangent on $\tilde{L}$
is \[
\left(\gamma_{1}\sqrt{\frac{(-a_{n}+\delta)}{(a_{n-1}-\delta)}}\alpha\hat{\mathbf{v}}_{2},\alpha\right)-\left(2\gamma_{3}\sqrt{\frac{2\epsilon\delta}{(a_{n-1}-\delta)(-a_{n}-\delta)}}\hat{\mathbf{v}}_{3},2\gamma_{4}\sqrt{\frac{\epsilon}{-a_{n}}}\frac{\delta}{-a_{n}}+o(\delta)\right).\]
To simplify notation, note that we only require an $O(\delta)$ approximation
of $\alpha$, we can take the terms like $-a_{n}+\delta$ and $-a_{n}-\delta$
to be $-a_{n}+O(\delta)$ and so on. The dot product of the above
vector and the normal of the affine space $L$ calculated in Formula
\eqref{eq:normal-axis} must be zero, which after some simplification
gives:\begin{eqnarray*}
\Bigg(\left(\gamma_{2}\sqrt{\frac{-a_{n}}{a_{n-1}}}+O(\delta)\right)\alpha\hat{\mathbf{v}}_{2}-\left(2\gamma_{3}\sqrt{\frac{2\epsilon\delta}{a_{n-1}(-a_{n})}}+O(\delta^{3/2})\right)\hat{\mathbf{v}}_{3}\qquad\qquad\\
,\alpha-\left(2\gamma_{4}\sqrt{\frac{\epsilon}{-a_{n}}}\frac{\delta}{-a_{n}}+o(\delta)\right)\Bigg)\cdot\left(\left(2\gamma_{1}\sqrt{\frac{2\delta}{a_{n-1}}}+O(\delta^{3/2})\right)\hat{\mathbf{v}}_{1},1\right) & = & 0.\end{eqnarray*}
At this point, we remind the reader that the $O(\delta^{k})$ terms
mean that there exists some $K>0$ such that if $\delta$ were small
enough, we can find terms $t_{1}$ to $t_{3}$ such that $\left|t_{i}\right|<K\delta^{k}$
and the formula above is satisfied by $t_{i}$ in place of the $O(\delta^{k})$
terms. Further arithmetic gives\begin{eqnarray*}
 &  & 4\gamma_{1}\gamma_{3}\sqrt{\frac{2\delta}{a_{n-1}}}\sqrt{\frac{2\epsilon\delta}{a_{n-1}(-a_{n})}}(\hat{\mathbf{v}}_{3}\cdot\hat{\mathbf{v}}_{1})+2\gamma_{4}\sqrt{\frac{\epsilon}{-a_{n}}}\frac{\delta}{-a_{n}}+o(\delta)\\
 &  & \,\,\,\,\,\,\,\,\,\,=\alpha\left(1+2\gamma_{1}\gamma_{2}\sqrt{\frac{2\delta}{a_{n-1}}}\sqrt{\frac{-a_{n}}{a_{n-1}}}(\hat{\mathbf{v}}_{2}\cdot\hat{\mathbf{v}}_{1})+o(\delta^{3/2})\right)\\
 &  & \,\,\,\,\,\,\,\,\,\,=\alpha(1+O(\sqrt{\delta}))\end{eqnarray*}
To find an upper bound for $\alpha$, it is clear that we should take
$\gamma_{1}=\gamma_{3}=\gamma_{4}=1$ and $\hat{\mathbf{v}}_{3}\cdot\hat{\mathbf{v}}_{1}=1$.
 The $O(\sqrt{\delta})$ term is superfluous, and this simplifies
to give \begin{equation}
\alpha\leq\delta\sqrt{\frac{\epsilon}{-a_{n}}}\left(\frac{8}{a_{n-1}}+\frac{2}{-a_{n}}\right)+o(\delta).\label{eq:alpha-formula}\end{equation}
We could find the minimum possible value of $\alpha$ by these same
series of steps and show that the absolute value would be bounded
above by the same bound. This ends the proof of Lemma \ref{lem:alpha-step-1}. 
\end{proof}
It is clear that the minimum value of $f$ on $L\cap\mathring{\mathbb{B}}(\mathbf{0},\theta)$
is at most $0$, since $L$ intersects the axis corresponding to the
$n$th coordinate and $f$ is nonpositive there. Therefore the set
$(\lev_{\leq0}f)\cap L\cap\mathring{\mathbb{B}}(\mathbf{0},\theta)$
is nonempty, and $f$ has a local minimizer on $L\cap\mathring{\mathbb{B}}(\mathbf{0},\theta)$.

We now state and prove our second lemma that will conclude the proof
of Proposition \ref{pro:alpha-bound}.
\begin{lem}
\label{pro:convex-concave}Let $L$ be the perpendicular bisector
of $\tilde{x}$ and $\tilde{y}$ as defined in point (1) of Proposition
\ref{pro:alpha-bound} with $\bar{x}=\mathbf{0}$. If $\delta$ and
$\theta$ are small enough satisfying Assumptions \ref{ass:simplify-quad}
and \ref{ass:simplify-theta}, then $f\mid_{L\cap\mathbb{B}(\mathbf{0},\theta)}$
is strictly convex. \end{lem}
\begin{proof}
The lineality space of $L$, written as $\mbox{lin}(L)$, is the
space of vectors orthogonal to $\tilde{x}-\tilde{y}$. We can infer
from Formula \eqref{eq:normal-axis} that $\tilde{x}-\tilde{y}$ is
a scalar multiple of a vector of the form $(w,1)$, where $w\in\mathbb{R}^{n-1}$
satisfies $\left|w\right|\rightarrow\mathbf{0}$ as $\delta\rightarrow0$.
We consider a vector $v\in\mbox{lin}(L)$ orthogonal to $(w,1)$ that
can be scaled so that $v_{}=(\tilde{w},1)$, where $(w_{},1)\cdot(\tilde{w},1)=0$,
which gives $w_{}\cdot\tilde{w}=-1$. The Cauchy Schwarz inequality
gives us\begin{eqnarray*}
\left|\tilde{w}\right|\left|w\right| & \geq & \left|\tilde{w}\cdot w\right|\\
 & = & 1\\
\Rightarrow\left|\tilde{w}_{}\right| & \geq & \left|w\right|^{-1}.\end{eqnarray*}
So \begin{eqnarray*}
\frac{v^{\top}H(p)v}{v_{}^{\top}v} & = & \frac{v_{}^{\top}H(\mathbf{0})v}{v^{\top}v}+\frac{v_{}^{\top}(H(p)-H(\mathbf{0}))v}{v_{}^{\top}v}\\
 & = & \frac{\sum_{j=1}^{n-1}a_{j}v^{2}(j)+a_{n}}{\sum_{j=1}^{n-1}v^{2}(j)+1}+\frac{v_{}^{\top}(H(p)-H(\mathbf{0}))v_{}}{v_{}^{\top}v_{}}\\
 & \geq & \underbrace{\frac{a_{n-1}\sum_{j=1}^{n-1}v^{2}(j)+a_{n}}{\sum_{j=1}^{n-1}v^{2}(j)+1}}_{(1)}+\underbrace{\frac{v^{\top}(H(p)-H(\mathbf{0}))v}{v_{}^{\top}v}}_{\left(2\right)}.\end{eqnarray*}
Since $\sum_{j=1}^{n-1}v_{}^{2}(j)=\left|\tilde{w}_{}\right|^{2}\rightarrow\infty$
as $\left|w\right|\rightarrow0$, the limit of term $(1)$ is $a_{n-1}$,
so there is an open set $\mathbb{B}(\mathbf{0},\theta)$ containing
$\mathbf{0}$ such that $\frac{v^{\top}H(p)v}{v_{}^{\top}v}>\frac{1}{2}a_{n-1}$
for all $v\in\mbox{lin}(L)\cap\{x\mid x(n)=1\}$ and $p\in\mathbb{B}(\mathbf{0},\theta)$.
By the continuity of the Hessian, we may reduce $\theta$ if necessary
so that $\left\Vert H(p)-H(\mathbf{0})\right\Vert <\frac{1}{2}a_{n-1}$
for all $p\in\mathbb{B}(\mathbf{0},\theta)$. Thus $\frac{v_{}^{\top}H(p)v}{v^{\top}v}>0$
for all $p\in\mathbb{B}(\mathbf{0},\theta)$ and $v\in\mbox{lin}(L)\cap\{x\mid x(n)=1\}$
if $\delta$ is small enough. 

The vectors of the form $v=(\tilde{w},0)$ do not present additional
difficulties as the corresponding term $(1)$ is at least $a_{n-1}$.
This proves that the Hessian $H(p)$ restricted to $\mbox{lin}(L)$
is positive definite, and hence the strict convexity of $f$ on $L\cap\mathring{\mathbb{B}}(\mathbf{0},\theta)$. 
\end{proof}
Since $f$ has a local minimizer in $L\cap\mathring{\mathbb{B}}(\mathbf{0},\theta)$
and is strictly convex there, we have (2) and the first part of part
(3). The inequality $f(\bar{x})\leq\max_{[\tilde{x},\tilde{y}]}f$
follows easily from the fact that the line segment $[\tilde{x},\tilde{y}]$
intersects the set $\{x\mid x(n)=0\}$, on which $f$ is nonnegative.
\end{proof}
Here is our theorem on the convergence of Algorithm \ref{alg:(Mountain-pass-1)}.
\begin{thm}
\label{thm:superline-conv}Suppose that $f:\mathbb{R}^{n}\rightarrow\mathbb{R}$
is $\mathcal{C}^{2}$ in a neighborhood of a nondegenerate critical
point $\bar{x}$ of Morse index 1. If $\theta>0$ is sufficiently
small and $x_{0}$ and $y_{0}$ are chosen such that 
\begin{enumerate}
\item [(a)] $x_{0}$ and $y_{0}$ lie in the two different components
of $(\lev_{\leq f(x_{0})}f)\cap\mathring{\mathbb{B}}(\bar{x},\theta)$, 
\item [(b)] $f(x_{0})=f(y_{0})<f(\bar{x})$, 
\end{enumerate}
then Algorithm \ref{alg:(Mountain-pass-1)} with $U=\mathring{\mathbb{B}}(\bar{x},\theta)$
generates a sequence of  iterates $\left\{ \tilde{x}_{i}\right\} _{i}$
and $\left\{ \tilde{y}_{i}\right\} _{i}$ lying in $\mathring{\mathbb{B}}(\bar{x},\theta)$
such that the function values $\left\{ f(\tilde{x}_{i})\right\} _{i}$
and $\left\{ f(\tilde{y}_{i})\right\} _{i}$ converge to $f(\bar{x})$
superlinearly, and the iterates $\left\{ \tilde{x}_{i}\right\} _{i}$
and $\left\{ \tilde{y}_{i}\right\} _{i}$ converge to $\bar{x}$ superlinearly.\end{thm}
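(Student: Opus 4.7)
The plan is to analyze one iteration of Algorithm \ref{alg:(Mountain-pass-1)} quantitatively, show that the value gap contracts by a factor that vanishes as iterates approach $\bar{x}$, and then close an induction on a shrinking scale. Adopting Assumption \ref{ass:simplify-quad} (translate so that $\bar{x}=\mathbf{0}$ and $f(\bar{x})=0$), I would track the error $\epsilon_i := -f(\tilde{x}_i) = -f(\tilde{y}_i) > 0$ and estimate $\epsilon_{i+1}$ in terms of $\epsilon_i$ and the local quadratic-approximation modulus $\delta$.

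First, at iteration $i$, Proposition \ref{pro:alg-prelim} applied at level $-\epsilon_i$ shows that step 1(a) is well-defined and that $\tilde{x}_i,\tilde{y}_i$ lie in the two cylinders $C_1,C_2$ of \eqref{eq:The-cylinders}. The explicit expressions for $a$, $b$, $c$ from the proof of Proposition \ref{pro:alg-prelim} give $|\tilde{x}_i|,|\tilde{y}_i| = \Theta(\sqrt{\epsilon_i})$, and they sit on the boundary $\{f=-\epsilon_i\}$. Next, Proposition \ref{pro:alpha-bound} together with Lemma \ref{lem:alpha-step-1} provides the unique minimizer $z_i$ of $f$ on $L_i\cap\mathring{\mathbb{B}}(\mathbf{0},\theta)$ and yields $|z_i(n)|\le\alpha=O(\delta\sqrt{\epsilon_i})$ together with a transverse bound of the same order, so $|z_i|^2 = O(\delta^2\epsilon_i)$. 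The lower bound in \eqref{eq:approx-label} then gives $-f(z_i)\le(-a_n+\delta)|z_i(n)|^2 \le C\delta^2\epsilon_i$. Since step 2 selects $x_{i+1}\in[\tilde{x}_i,z_i]$ with $f(x_{i+1})=f(z_i)$, and since step 1(a) at iteration $i{+}1$ (by the same local analysis) produces a pair on the boundary of $\lev_{\le f(x_{i+1})}f$, we obtain
\[
\epsilon_{i+1} \;=\; -f(z_i) \;\le\; C\,\delta^2\,\epsilon_i.
\]

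A single fixed $\theta$ only delivers linear convergence, so I would boost the rate by letting the scale shrink along the iteration. Since $|\tilde{x}_i|,|\tilde{y}_i| = \Theta(\sqrt{\epsilon_i})$, the entire local picture at iteration $i$ fits inside $\mathbb{B}(\mathbf{0},K\sqrt{\epsilon_i})$ for a fixed $K$, so I may rerun the analysis with $\theta$ replaced by $\theta_i:=K\sqrt{\epsilon_i}$ and with $\delta$ replaced by the smallest admissible constant $\delta_i:=\delta(\theta_i)$ in Assumption \ref{ass:simplify-theta}. By $\mathcal{C}^2$ regularity (Taylor's theorem at $\mathbf{0}$), $\delta(\theta)\to 0$ as $\theta\to 0$, so $\epsilon_{i+1}/\epsilon_i \le C\delta_i^{\,2}\to 0$, which is superlinear convergence of the function values. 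The bound $|\tilde{x}_i|^2,|\tilde{y}_i|^2 = O(\epsilon_i)$ then transfers this to R-superlinear convergence of the iterates to $\bar{x}$.

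The main obstacle I anticipate is making the induction self-sustaining at the shrinking scale $\theta_i$. I must verify that after step 2 and the subsequent step 1(a) at iteration $i{+}1$, the pair $(\tilde{x}_{i+1},\tilde{y}_{i+1})$ still lies in two \emph{distinct} components of $(\lev_{\le -\epsilon_{i+1}}f)\cap\mathring{\mathbb{B}}(\mathbf{0},\theta_{i+1})$ and has neither merged nor escaped the smaller ball, so that Propositions \ref{pro:alg-prelim} and \ref{pro:alpha-bound} can be reapplied at scale $\theta_{i+1}$. The key ingredients here are: the segment $[\tilde{x}_i,x_{i+1}]$ lies in $\lev_{\le -\epsilon_{i+1}}f$ by the construction of $x_{i+1}$ in step 2; the two-component guarantee in Proposition \ref{pro:alg-prelim} at the finer level; and continuity of the Hessian, which transfers the Morse-type geometry uniformly across the nested shrinking balls.
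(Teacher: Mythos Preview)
Your proposal is correct and follows essentially the same route as the paper's proof: obtain the contraction $\epsilon_{i+1}\le C\delta^2\epsilon_i$ from the $\alpha$-bound of Lemma \ref{lem:alpha-step-1}, then bootstrap linear to superlinear convergence by letting the analysis scale (and hence the admissible $\delta$) shrink with $\epsilon_i$, and finally transfer to the iterates via $|\tilde{x}_i|=\Theta(\sqrt{\epsilon_i})$. The paper organizes the bootstrap slightly differently---first fixing a single $\delta$ to secure linear convergence (hence $\epsilon_i\to 0$), and only then passing to a sequence $\delta_k\searrow 0$ with associated $\theta_k$, noting that the cylinders $C_1,C_2$ at level $\epsilon_i$ eventually fit inside $\mathbb{B}(\mathbf{0},\theta_k)$---which cleanly avoids the circularity you flag in your ``main obstacle'' paragraph; your version with $\theta_i:=K\sqrt{\epsilon_i}$ accomplishes the same thing once you first establish $\epsilon_i\to 0$ at a fixed scale. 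One small point: since you already have $|\tilde{x}_i|=\Theta(\sqrt{\epsilon_i})$ (two-sided), you can in fact conclude Q-superlinear convergence of the iterates, as the paper does in its Step 3, rather than only the R-superlinear rate you state.
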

\begin{proof}
As usual, suppose Assumption \ref{ass:simplify-quad} holds, and $\delta$
and $\theta$ are chosen so that Assumption \ref{ass:simplify-theta}
holds. 

\textbf{Step 1: Linear convergence of $f(\tilde{x}_{i})$ to critical
value $f(\bar{x})$.} 

Let $\epsilon=f(\tilde{x}_{i})$. The next iterate $x_{i+1}$ satisfies
$f(x_{i+1})=f(z_{i})$, and is bounded from below by \[
f(x_{i+1})\geq(a_{n}-\delta)\alpha^{2}=-\epsilon\delta^{2}\left(\frac{8}{a_{n-1}}+\frac{2}{-a_{n}}\right)^{2}+o(\delta^{2}),\]
where $\alpha$ is the value calculated in Lemma \ref{lem:alpha-step-1}.
The ratio between the previous function value and the next function
value is at most \[
\rho(\delta):=\delta^{2}\left(\frac{8}{a_{n-1}}+\frac{2}{-a_{n}}\right)^{2}+o(\delta^{2}).\]
This ratio goes to $0$ as $\delta\searrow0$, so we can choose some
$\delta$ small enough so that $\rho<\frac{1}{2}$. We can choose
$\theta$ corresponding to the value of $\delta$ satisfying Assumption
\ref{ass:simplify-theta}. This shows that the convergence to $0$
of the function values $f(\tilde{x}_{i+1})=f(x_{i+1})$ in Algorithm
\ref{alg:(Mountain-pass-1)} is linear provided $x_{0}$ and $y_{0}$
lie in $\mathbb{B}(\mathbf{0},\theta)$ and $\epsilon$ is small enough
by Proposition \ref{pro:alg-prelim}. We can reduce $\theta$ if necessary
so that $f(x)\geq-\epsilon$ for all $x\in\mathbb{B}(\mathbf{0},\theta)$,
so the condition on $\epsilon$ does not present difficulties. 

\textbf{Step 2: Superlinear convergence of $f(\tilde{x}_{i})$ to
critical value $f(\bar{x})$.}

Choose a sequence $\{\delta_{k}\}_{k}$ so that $\delta_{k}\searrow0$
monotonically. Corresponding to $\delta_{k}$, we can choose $\theta_{k}$
satisfying Assumption \ref{ass:simplify-theta}. Since $\{\tilde{x}_{i}\}_{i}$
and $\{\tilde{y}_{i}\}_{i}$ converge to $\mathbf{0}$, for any $k\in\mathbb{Z}_{+}$,
we can find some $i^{*}\in\mathbb{Z}_{+}$ so that the cylinders $C_{1}$
and $C_{2}$ constructed in Figure \ref{fig:Local-saddle} corresponding
to $\epsilon_{i}=-f(\tilde{x}_{i})$ and $\delta=\delta_{1}$ lie
wholly in $\mathbb{B}(\mathbf{0},\theta_{k})$ for all $i>i^{*}$.
As remarked in step 3 of the proof of Proposition \ref{pro:alg-prelim},
$\tilde{x}_{i}$ and $\tilde{y}_{i}$ must lie inside $C_{1}$ and
$C_{2}$, so we can take $\delta=\delta_{k}$ for the ratio $\rho$.
This means that $\frac{\left|f(\tilde{x}_{i+1})\right|}{\left|f(\tilde{x}_{i})\right|}\leq\rho(\delta_{k})$\textbf{
}for all $i>i^{*}$. As $\rho(\delta)\searrow0$ as $\delta\searrow0$,
this means that we have superlinear convergence of the $f(\tilde{x}_{i})$
to the critical value $f(\bar{x})$. 

\textbf{Step 3: Superlinear convergence of $\tilde{x}_{i}$ to the
critical point $\bar{x}$.}

We now proceed to prove that the distance between the critical point
$\mathbf{0}$ and the iterates decrease superlinearly by calculating
the value $\frac{\left|\tilde{x}_{i+1}\right|}{\left|\tilde{x}_{i}\right|}$,
or alternatively $\frac{\left|\tilde{x}_{i+1}\right|^{2}}{\left|\tilde{x}_{i}\right|^{2}}$.
The value $\left|\tilde{x}_{i}\right|$ satisfies $\left|\tilde{x}_{i}\right|^{2}\geq b^{2}=\frac{\epsilon}{-a_{n}+\delta}$.
To find an upper bound for $\left|\tilde{x}_{i+1}\right|^{2}$, it
is instructive to look at an upper bound for $\left|\tilde{x}_{i}\right|^{2}$
first. As can be deduced from Figure \ref{fig:Local-saddle}, an upper
bound for $\left|\tilde{x}_{i}\right|^{2}$ is the square of the distance
between $\mathbf{0}$ and the furthest point in $C_{1}$, which is\begin{eqnarray*}
(2c-b)^{2}+a^{2} & = & (c+(c-b))^{2}+a^{2}\\
 & = & \frac{\epsilon}{-a_{n}-\delta}+8\frac{\epsilon\delta}{(-a_{n})^{2}}+\frac{8\epsilon\delta}{(a_{n-1}-\delta)(-a_{n}-\delta)}+o(\delta).\end{eqnarray*}
This means that an upper bound for $\left|\tilde{x}_{i+1}\right|^{2}$
is \[
\delta^{2}\left(\frac{8}{a_{n-1}}+\frac{2}{-a_{n}}\right)^{2}\left(\frac{\epsilon}{-a_{n}-\delta}+\frac{8\epsilon\delta}{-a_{n}}\left(\frac{1}{-a_{n}}+\frac{1}{(a_{n-1}-\delta)}\right)\right)+o(\delta^{2}).\]
From this point, one easily sees that as $i\rightarrow\infty$, $\delta\rightarrow0$,
and $\frac{\left|\tilde{x}_{i+1}\right|^{2}}{\left|\tilde{x}_{i}\right|^{2}}\rightarrow0$.
This gives the superlinear convergence of the distance between the
critical point and the iterates $\tilde{x}_{i}$ that we seek. 
\end{proof}

\section{\label{sec:fast-local-observations}Further properties of the local
algorithm}

In this section, we take note of some interesting properties of Algorithm
\ref{alg:(Mountain-pass-1)}. First, we show that it is easy to find
$x_{i+1}$ and $y_{i+1}$ in step 2 of Algorithm \ref{alg:(Mountain-pass-1)}.
\begin{prop}
\label{pro:x-i+1-equals-z-i}Suppose the conditions in Theorem \ref{thm:superline-conv}
hold. Consider the sequence of iterates $\{x_{i}\}_{i}$ and $\{y_{i}\}_{i}$
generated by Algorithm \ref{alg:(Mountain-pass-1)}. If $i$ is large
enough, then either $x_{i+1}=z_{i}$ or $y_{i+1}=z_{i}$ in step 2
of Algorithm \ref{alg:(Mountain-pass-1)}.\end{prop}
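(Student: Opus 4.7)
The plan is to show that, for all large $i$, $f$ never exceeds $f(z_i)$ along at least one of the two segments $[x_i, z_i]$ or $[y_i, z_i]$, forcing the rule defining $x_{i+1}$ (respectively $y_{i+1}$) in step 2 of Algorithm \ref{alg:(Mountain-pass-1)} to pick the terminal point $z_i$. Note that after step 1(a), the iterates $x_i$ and $y_i$ coincide with the $\tilde{x}_i$ and $\tilde{y}_i$ appearing in Propositions \ref{pro:alg-prelim} and \ref{pro:alpha-bound}.

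First I would exploit the fact, from Proposition \ref{pro:alpha-bound}, that $z_i$ is the unique minimizer of $f$ on $L_i \cap U$ and (by the size bounds in Lemma \ref{lem:alpha-step-1}) lies in the interior of $U$ for large $i$. First-order optimality then forces $\nabla f(z_i)$ to be orthogonal to the affine space $L_i$; in other words, $\nabla f(z_i) = \lambda(x_i - y_i)$ for some scalar $\lambda$. Decomposing $z_i = \tfrac{1}{2}(x_i + y_i) + w$ with $w \perp (x_i - y_i)$ (since $z_i$ lies on the perpendicular bisector), a one-line computation gives
\[
(x_i - z_i) \cdot \nabla f(z_i) = \tfrac{\lambda}{2}|x_i - y_i|^2, \qquad (y_i - z_i) \cdot \nabla f(z_i) = -\tfrac{\lambda}{2}|x_i - y_i|^2.
\]
These two directional derivatives have opposite signs, so without loss of generality $(x_i - z_i) \cdot \nabla f(z_i) \leq 0$, and the aim becomes to show $x_{i+1} = z_i$; the other case yields $y_{i+1} = z_i$ by the symmetric argument.

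Introduce $g(s) := f(z_i + s(x_i - z_i))$ on $s \in [0,1]$; the displayed inequality is exactly $g'(0) \leq 0$. I would then prove that $g$ is strictly concave on $[0,1]$ for large $i$. We have
\[
g''(s) = (x_i - z_i)^{\top} H\big(z_i + s(x_i - z_i)\big)(x_i - z_i).
\]
Proposition \ref{pro:alg-prelim} pins $x_i$ inside the cylinder $C_1$, whose $n$-th coordinate is of order $\sqrt{\epsilon_i}$ with $\epsilon_i := -f(x_i)$ and whose remaining coordinates are of order $\sqrt{\delta \epsilon_i}$, while Lemma \ref{lem:alpha-step-1} confines every coordinate of $z_i$ to order $\delta \sqrt{\epsilon_i}$. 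Hence $x_i - z_i$ is asymptotically aligned with the $n$-th axis, and with $H(\mathbf{0}) = \mathrm{diag}(a_1, \dots, a_n)$ by Assumption \ref{ass:simplify-quad}, the dominant contribution is $a_n (x_i(n) - z_i(n))^2 \approx -\epsilon_i$, while the transverse quadratic terms contribute only $O(\delta \epsilon_i)$ and the Hessian perturbation $(x_i - z_i)^{\top}(H(\xi) - H(\mathbf{0}))(x_i - z_i)$ can be made arbitrarily small relative to $\epsilon_i$ by continuity of $H$ once $\theta$ is reduced. For $\delta$ and $\theta$ small enough this yields $g''(s) \leq -\tfrac{1}{2}\epsilon_i$ uniformly on $[0,1]$.

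Concavity together with $g'(0) \leq 0$ implies $g'(s) \leq 0$ throughout $[0,1]$, so $g$ is non-increasing and $g(s) \leq g(0) = f(z_i)$ for every $s \in [0,1]$. Thus $f \leq f(z_i)$ on the entire segment $[x_i, z_i]$, and step 2 of Algorithm \ref{alg:(Mountain-pass-1)} yields $x_{i+1} = z_i$. The main obstacle in this plan is the quantitative concavity bound: one must verify that the single negative eigenvalue $a_n$ dominates both the positive eigenvalues contributing through the small transverse components of $x_i - z_i$ and the second-order errors from Assumption \ref{ass:simplify-theta} and the Hessian's variation over $[z_i, x_i]$. This reduces to tracking the same order-of-magnitude estimates already carried out in Propositions \ref{pro:alg-prelim} and \ref{pro:alpha-bound}, exploiting that $x_i - z_i$ becomes asymptotically parallel to the $n$-th coordinate axis as $i \to \infty$.
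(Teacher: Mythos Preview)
Your proof is correct and follows essentially the same approach as the paper's: both arguments use the first-order optimality condition at $z_i$ to obtain that $\nabla f(z_i)$ is a multiple of $x_i-y_i$ (hence the directional derivatives toward $x_i$ and $y_i$ have opposite signs), and both establish strict concavity of $f$ along the segment $[x_i,z_i]$ by showing, via the cylinder bounds of Proposition~\ref{pro:alg-prelim} and Lemma~\ref{lem:alpha-step-1}, that the unit vector along $x_i-z_i$ converges to the $n$-th coordinate direction. The paper phrases the conclusion as ``$f$ increases then decreases along the piecewise linear path $x_i\to z_i\to y_i$,'' while you directly pick the segment with $g'(0)\le 0$ and argue monotonicity there, but the content is the same.
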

\begin{proof}
\textit{\emph{Let $\tilde{p}:\left[0,1\right]\rightarrow\mathbb{R}^{n}$
denote the piecewise linear path connecting $x_{i}$ to $z_{i}$ to
$y_{i}$. It suffices to prove that along}} $\tilde{p}$, the function
$f$ increases to a maximum, and then decreases. Suppose Assumptions
\ref{ass:simplify-quad} and \ref{ass:simplify-theta} hold. The cylinders
$C_{1}$ and $C_{2}$ in Figure \ref{fig:Local-saddle} are loci for
$x_{i}$ and $y_{i}$. We assume that $x_{i}$ lies in $C_{2}$ in
Figure \ref{fig:Local-saddle}. The calculations in \eqref{eq:max-alpha}
in Lemma \ref{lem:alpha-step-1} tell us that $z_{i}$ can be written
as \[
\left(\sqrt{\frac{(-a_{n}+\delta)}{(a_{n-1}-\delta)}}\alpha\lambda_{1}\hat{\mathbf{v}}_{2},\lambda_{2}\alpha\right)\in\mathbb{R}^{n-1}\times\mathbb{R},\]
where $0<\lambda_{1}<\lambda_{2}\leq1$, $\left|\hat{\mathbf{v}}_{2}\right|=1$
and $\alpha=\delta\sqrt{\frac{\epsilon}{-a_{n}}}\left(\frac{8}{a_{n-1}}+\frac{2}{-a_{n}}\right)+o(\delta)$
by \eqref{eq:alpha-formula}. Therefore, $x_{i}-z_{i}$ can be written
as \[
\left(\mathbf{v}_{1},\sqrt{\frac{\epsilon}{-a_{n}+\delta}}+o(\sqrt{\delta\epsilon})\right),\]
where $\mathbf{v}_{1}\in\mathbb{R}^{n-1}$ satisfies \begin{eqnarray*}
\left|\mathbf{v}_{1}\right| & \leq & \sqrt{\frac{(-a_{n}+\delta)}{(a_{n-1}-\delta)}}\alpha+a\\
 & = & O(\sqrt{\epsilon\delta}),\end{eqnarray*}
and $a=\sqrt{\frac{2\epsilon\delta}{(a_{n-1}-\delta)(-a_{n}-\delta)}}$
is as calculated in the proof of Proposition \ref{pro:alg-prelim}.
This means that the unit vector with direction $x_{i}-z_{i}$ converges
to the $n$-th elementary vector as $\delta\searrow0$. By appealing
to Hessians as is done in the proof of Lemma \ref{pro:convex-concave},
we see that the function $f$ is strictly concave in the line segment
$[x_{i},z_{i}]$ if $i$ is large enough. Similarly, $f$ is strictly
concave in the line segment $[y_{i},z_{i}]$ if $i$ is large enough.

Next, we prove that the function $f$ has only one local maximizer
in $\tilde{p}(\left[0,1\right])$. In the case where $\nabla f(z_{i})=\mathbf{0}$,
the concavity of $f$ on the line segments $[x_{i},z_{i}]$ and $[y_{i},z_{i}]$
tells us that $z_{i}$ is the a unique maximizer on $\tilde{p}([0,1])$.
We now look at the case where $\nabla f(z_{i})\neq\mathbf{0}$. Since
$z_{i}$ is the minimizer on a subspace with normal $x_{i}-y_{i}$,
$\nabla f(z_{i})$ is a (possibly negative) multiple of $x_{i}-y_{i}$.
This means that $\nabla f(z_{i})\cdot(x_{i}-z_{i})$ has a different
sign than $\nabla f(z_{i})\cdot(y_{i}-z_{i})$. In other words, the
map $t\mapsto f(\tilde{p}\left(t\right))$ increases then decreases.
This concludes the proof of the proposition.\end{proof}
\begin{rem}
Note that in Algorithm \ref{alg:(Mountain-pass-1)}, all we need in
step 1 is a good lower bound of the critical value. We can exploit
convexity as proved in Lemma \ref{pro:convex-concave} and use cutting
plane methods to attain a lower bound for $f$ on $L_{i}\cap\mathbb{B}(\bar{x},\theta)$.
\end{rem}
Recall from Proposition \ref{pro:alpha-bound} that $M_{i}$ is a
sequence of upper bounds of the critical value $f(\bar{x})$. While
it is not even clear that $M_{i}$ is monotonically decreasing, we
can prove the following convergence result on $M_{i}$. 
\begin{prop}
\label{pro:upper-bound-better}Suppose that $f:\mathbb{R}^{n}\rightarrow\mathbb{R}$
is $\mathcal{C}^{2}$ in a neighborhood of a nondegenerate critical
point $\bar{x}$ of Morse index 1, the neighborhood $U$ of $\bar{x}$
and the points $x_{0}$ and $y_{0}$ are chosen satisfying the conditions
in the statement of Theorem \ref{thm:superline-conv}. Then in Algorithm
\ref{alg:(Mountain-pass-1)}, $M_{i}:=\max_{[x_{i},y_{i}]}f$ converges
R-superlinearly to the critical value.\end{prop}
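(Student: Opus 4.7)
The plan is to reduce the claim directly to Theorem \ref{thm:superline-conv} via a Taylor expansion, by bounding $M_i - f(\bar{x})$ above by the squared distance from the iterates to the critical point.

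First, Proposition \ref{pro:alpha-bound}(3) already gives $f(\bar{x}) \leq M_i$, so $|M_i - f(\bar{x})| = M_i - f(\bar{x}) \geq 0$, and the task reduces to producing a superlinearly convergent upper bound for this quantity.

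Next, since $f$ is $\mathcal{C}^2$ near $\bar{x}$ and $\nabla f(\bar{x}) = \mathbf{0}$, Taylor's theorem (or equivalently Assumption \ref{ass:simplify-theta} after the coordinate change in Assumption \ref{ass:simplify-quad}) yields a constant $C > 0$ and a neighborhood of $\bar{x}$ on which $f(x) - f(\bar{x}) \leq C|x - \bar{x}|^2$. Provided $\theta$ is small enough that $\mathring{\mathbb{B}}(\bar{x}, \theta)$ sits inside this neighborhood, the whole segment $[\tilde{x}_i, \tilde{y}_i]$ lies in it, and for every $x$ on this segment, convexity of the norm gives $|x - \bar{x}| \leq \max(|\tilde{x}_i - \bar{x}|,\, |\tilde{y}_i - \bar{x}|)$. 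Taking the maximum over $x$ then yields
\[
0 \leq M_i - f(\bar{x}) \leq C \max\bigl(|\tilde{x}_i - \bar{x}|,\, |\tilde{y}_i - \bar{x}|\bigr)^2.
\]

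Finally, Theorem \ref{thm:superline-conv} guarantees that both $|\tilde{x}_i - \bar{x}|$ and $|\tilde{y}_i - \bar{x}|$ converge to zero superlinearly. I would conclude by noting two elementary facts: the maximum of two positive superlinearly convergent sequences is itself superlinearly convergent (since $\max(a_{i+1}, b_{i+1})/\max(a_i, b_i) \leq \max(a_{i+1}/a_i,\, b_{i+1}/b_i) \to 0$), and squaring preserves superlinearity (as $(a_{i+1}/a_i)^2 \to 0$). This produces the required superlinearly convergent majorant and establishes R-superlinear convergence of $\{M_i\}$ to $f(\bar{x})$. There is no serious obstacle, since the hard work, namely the superlinear convergence of the iterates themselves, has already been completed in Theorem \ref{thm:superline-conv}; the present proposition is essentially an immediate corollary once the quadratic Taylor bound is in place.
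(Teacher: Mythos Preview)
Your proof is correct and takes a somewhat more direct route than the paper's. The paper bounds $M_i$ via the specific cylinder geometry developed in the proof of Proposition~\ref{pro:alg-prelim}: since $\tilde{x}_i\in C_1$ and $\tilde{y}_i\in C_2$, the maximum of $f$ on any segment joining $C_1$ to $C_2$ is at most $(a_1+\delta)a^2 = O(\delta\epsilon)$ where $\epsilon=-f(\tilde{x}_i)$, and R-superlinear convergence then follows from the superlinear convergence of the \emph{function values} $f(\tilde{x}_i)$ in Theorem~\ref{thm:superline-conv}. You instead apply a generic quadratic Taylor bound on the segment $[\tilde{x}_i,\tilde{y}_i]$ and invoke the superlinear convergence of the \emph{iterates} from the same theorem. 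Your argument is cleaner in that it avoids revisiting the cylinder estimates; the paper's bound is sharper (it carries the extra $\delta$ factor, so $M_i$ is small even relative to $|f(\tilde{x}_i)|$), but for R-superlinear convergence either bound suffices.
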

\begin{proof}
Suppose Assumption \ref{ass:simplify-quad} holds. An upper bound
of the critical value of the saddle point is obtained by finding the
maximum along the line segment joining two points in $C_{1}$ and
$C_{2}$, which is bounded from above by \begin{eqnarray*}
(a_{1}+\delta)a^{2} & = & (a_{1}+\delta)\frac{8\epsilon\delta}{(a_{n-1}-\delta)(-a_{n}-\delta)}.\end{eqnarray*}
A more detailed analysis by using cylinders with ellipsoidal base
instead of circular base tell us that the maximum is bounded above
by $\frac{8\epsilon\delta}{(-a_{n}-\delta)}$ instead. If $\delta>0$
is small enough, this value is much smaller than $-f(x_{i})=\epsilon$.
As $i\rightarrow\infty$, the estimates $-f(x_{i})$ converge superlinearly
to $0$ by Theorem \ref{thm:superline-conv}, giving us what we need.

\end{proof}
Step 1(a) is important in the analysis of Algorithm \ref{alg:(Mountain-pass-1)}.
As explained earlier in Section \ref{sec:locally-superlinearly-convergent},
it may be difficult to implement this step. Algorithm \ref{alg:(Mountain-pass-1)}
may run fine without ever performing step 1(a) (see the example in
Section \ref{sec:Wilkinson-2}), but it may need to be performed occasionally
in a practical implementation. The following result tells us that
under the assumptions we have made so far, this problem is locally
a strictly convex problem with a unique solution.
\begin{prop}
\label{pro:2-convex-sets}Suppose that $f:\mathbb{R}^{n}\rightarrow\mathbb{R}$
is $\mathcal{C}^{2}$ in a neighborhood of a nondegenerate critical
point $\bar{x}$ of Morse index 1 with critical value $f(\bar{x})=c$.
Then if $\epsilon>0$ is small enough, there is a convex neighborhood
$U_{\epsilon}$ of $\bar{x}$ such that $(\lev_{\leq c-\epsilon}f)\cap U_{\epsilon}$
is a union of two disjoint convex sets.

Consequently, providing $\theta$ is sufficiently small, the pair
of nearest points guaranteed by Proposition \ref{pro:alg-prelim}(2)
are unique. \end{prop}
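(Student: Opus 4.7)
The plan is to reduce to Assumption \ref{ass:simplify-quad} (so $\bar x = \mathbf{0}$, $c = 0$, and $H(\mathbf{0}) = \mathrm{diag}(a_1,\ldots,a_n)$ with $a_1 \ge \cdots \ge a_{n-1} > 0 > a_n$), take $U_\epsilon := \mathring{\mathbb{B}}(\mathbf{0}, \theta_\epsilon)$ with $\theta_\epsilon$ shrinking appropriately with $\epsilon$, and exhibit each component of $(\lev_{\le -\epsilon} f) \cap U_\epsilon$ as the epigraph (respectively hypograph) of a strictly convex (respectively concave) $\mathcal{C}^2$ function of $x' = (x_1,\ldots,x_{n-1})$ intersected with $U_\epsilon$. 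Proposition \ref{pro:alg-prelim}(1) and Lemma \ref{lem:Can-move-left} together with its mirror image already give, for sufficiently small $\delta$, $\theta_0$, $\epsilon$, the existence of exactly two components, one in each half-space $\{\pm x_n > 0\}$, with $\partial_{x_n} f$ of known, nonvanishing sign on each.

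For the upper component, the implicit function theorem yields a $\mathcal{C}^2$ function $\phi_+ : D_\epsilon \to \mathbb{R}$, on an open ball $D_\epsilon = \mathring{\mathbb{B}}^{n-1}(\mathbf{0}, r_\epsilon) \subset \mathbb{R}^{n-1}$ (the cylinder-containment from the proof of Proposition \ref{pro:alg-prelim} shows the component lies in such a cylinder), such that the upper component equals $\{(x', x_n) \in U_\epsilon : |x'| < r_\epsilon,\, x_n \ge \phi_+(x')\}$. Differentiating $f(x', \phi_+(x')) \equiv -\epsilon$ twice yields
\[
\nabla^2 \phi_+(x') = -\frac{1}{f_{x_n}} \left( H_{x'x'} + f_{x'x_n}(\nabla \phi_+)^\top + (\nabla \phi_+)(f_{x'x_n})^\top + f_{x_n x_n}(\nabla \phi_+)(\nabla \phi_+)^\top \right),
\]
with derivatives of $f$ evaluated at $(x', \phi_+(x'))$. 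As $\epsilon \searrow 0$ (and hence $\theta_\epsilon, r_\epsilon \searrow 0$), every base point $(x', \phi_+(x'))$ approaches the positive $x_n$-axis at height $\sim \sqrt{\epsilon/(-a_n)}$, so $\nabla \phi_+ \to \mathbf{0}$, $-f_{x_n}$ stays bounded below by order $\sqrt{\epsilon}$, and $H_{x'x'}$ tends to $\mathrm{diag}(a_1,\ldots,a_{n-1}) \succ 0$ by continuity of $\nabla^2 f$. The perturbative terms become negligible relative to the leading $-H_{x'x'}/f_{x_n} \succ 0$, so $\nabla^2 \phi_+$ is uniformly positive definite on $D_\epsilon$ for all small enough $\epsilon$, making $\phi_+$ strictly convex. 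A symmetric argument gives a strictly concave $\phi_-$ for the lower component. Each component is then the intersection of three convex sets (the ball $U_\epsilon$, the cylinder $D_\epsilon \times \mathbb{R}$, and the epigraph or hypograph), hence convex; strict convexity of $\phi_\pm$ in fact makes each component strictly convex.

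Uniqueness of the nearest pair follows by the standard midpoint argument for disjoint strictly convex sets: if $(\tilde x, \tilde y)$ and $(\tilde x', \tilde y')$ were two distinct minimizing pairs, then $\tfrac12(\tilde x + \tilde x')$ and $\tfrac12(\tilde y + \tilde y')$ lie in the interiors of the respective components by strict convexity, so one can move each slightly inward along the common perpendicular to obtain a pair with strictly smaller distance, contradicting minimality.

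I expect the main obstacle to be the uniform positive definiteness of $\nabla^2 \phi_+$ on $D_\epsilon$ in the second paragraph: the error terms involve $\nabla \phi_+$, $f_{x'x_n}$, and $f_{x_n x_n}$ away from the critical point, and all must be shown to be dominated by the leading term as $\epsilon \searrow 0$. The required inputs are the quantitative lower bound $|f_{x_n}| \gtrsim \sqrt{\epsilon}$ (implicit in Lemma \ref{lem:Can-move-left} and its symmetric counterpart) and the closeness of $\nabla^2 f$ to $H(\mathbf{0})$ on a neighborhood of size $O(\sqrt{\epsilon})$ (from $f \in \mathcal{C}^2$); both match the scale already in use throughout Section \ref{sec:Proof-of-superlinear}.
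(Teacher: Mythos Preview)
Your implicit-function route is a genuinely different strategy from the paper's, and it can be made to work, but there is a real gap in the step you flag as the main obstacle.

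The problem is the claim that ``every base point $(x',\phi_+(x'))$ approaches the positive $x_n$-axis \ldots\ so $\nabla\phi_+\to\mathbf{0}$.'' First, the cylinder containment in Proposition~\ref{pro:alg-prelim} is only for the \emph{nearest pair}, not for the whole component; the component of $(\lev_{\le -\epsilon}f)\cap U_\epsilon$ reaches out to radius $r_\epsilon\sim\theta_\epsilon$. Second, for $U_\epsilon$ to be nonempty you need $\theta_\epsilon\gtrsim\sqrt{\epsilon/(-a_n)}$, and for the uniqueness conclusion you also need $U_\epsilon$ to contain the cylinders $C_1,C_2$ (radius $\sim\sqrt{\epsilon\delta}$, height up to $2c-b\sim\sqrt{\epsilon}(1+O(\delta))$), which forces $\theta_\epsilon=\Theta(\sqrt{\epsilon})$. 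On that scale the ratio $|x'|/x_n$ is bounded but \emph{not} tending to zero, so $|\nabla\phi_+|=|f_{x'}|/|f_{x_n}|$ stays of order one. Consequently the term $f_{x_nx_n}(\nabla\phi_+)(\nabla\phi_+)^\top\approx a_n(\nabla\phi_+)(\nabla\phi_+)^\top$ is \emph{not} a negligible perturbation of $H_{x'x'}$: it is negative semidefinite and of the same order. Your argument, as written, does not establish positive definiteness of $\nabla^2\phi_+$.

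The fix is to treat $H_{x'x'}+f_{x_nx_n}(\nabla\phi_+)(\nabla\phi_+)^\top$ as the leading term and show it is positive definite by using the constraint $f=-\epsilon$. In the exact quadratic model one has, for $v\in\mathbb{R}^{n-1}$,
\[
\sum_{i<n}a_iv_i^2+a_n\Big(\sum_{i<n}\tfrac{a_ix_iv_i}{a_nx_n}\Big)^2
=\frac{(-a_nx_n^2)\sum a_iv_i^2-(\sum a_ix_iv_i)^2}{-a_nx_n^2}
\ge\frac{\epsilon\sum a_iv_i^2}{-a_nx_n^2}>0,
\]
by Cauchy--Schwarz in the inner product $\langle u,w\rangle=\sum_{i<n}a_iu_iw_i$ together with $-a_nx_n^2=\epsilon+\sum_{i<n}a_ix_i^2$. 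The genuinely small terms are then only those coming from $\nabla^2 f(x)-\nabla^2 f(\mathbf{0})$, and these are $o(1)$ as $\theta_\epsilon\to 0$, which suffices. This is the computation your outline skips, and it is where the work lies.

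By contrast, the paper avoids the implicit function theorem altogether: it takes $U_\epsilon=\conv(\tilde C_1\cup\tilde C_2)$ and proves convexity of each piece by a direct line-segment argument, splitting into two overlapping cases on the direction $v=(z_2-z_1)/|z_2-z_1|$. When the $n$th component $v_n$ is large, $\nabla f$ is uniformly close to the $x_n$-direction on $\tilde C_1$, so $f$ is strictly monotone along $[z_1,z_2]$; when $v_n$ is small, the restriction of the Hessian to the direction $v$ is positive, so $f$ is strictly convex along $[z_1,z_2]$. Either way the open segment lies in the interior of the sublevel set. This sidesteps the gradient-of-graph issue entirely. Your midpoint argument for uniqueness matches the paper's.
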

\begin{proof}
Suppose Assumptions \ref{ass:simplify-quad} and \ref{ass:simplify-theta}
hold.  In addition, we further assume that \[
\left|\nabla f(x)-H(x)\right|<\delta\left|x\right|\mbox{ for all }x\in\mathring{\mathbb{B}}(\mathbf{0},\theta).\]
We can choose $U_{\epsilon}$ to be the interior of $\conv(C_{1}\cup C_{2})$,
where $C_{1}$ and $C_{2}$ are the cylinders in Figure \ref{fig:Local-saddle}
and defined in the proof of Proposition \ref{pro:alg-prelim}, but
in view of Theorem \ref{thm:unique-min-pair}, we shall prove that
$U_{\epsilon}$ can be chosen to be the bigger set $\conv(\tilde{C}_{1}\cup\tilde{C}_{2})$,
where $\tilde{C}_{1}$ and $\tilde{C}_{2}$ are cylinders defined
by\begin{eqnarray*}
\tilde{C}_{1} & := & \mathbb{B}^{n-1}(\mathbf{0},\rho)\times\left[-\beta,-b\right]\subset\mathbb{R}^{n-1}\times\mathbb{R},\\
\tilde{C}_{2} & := & \mathbb{B}^{n-1}(\mathbf{0},\rho)\times\left[b,\beta\right]\subset\mathbb{R}^{n-1}\times\mathbb{R},\end{eqnarray*}
 where $\beta,\rho$ are constants to be determined. We choose $\beta$
such that \[
\mathbb{B}^{n-1}(\mathbf{0},a)\times\{\beta\}\subset\mbox{int}(S_{+}).\]
In particular, $\beta$ satisfies \begin{eqnarray*}
a^{2}(a_{1}+\delta)+\beta^{2}(a_{n}+\delta) & < & -\epsilon\\
\Rightarrow\beta^{2} & > & \frac{1}{-a_{n}-\delta}\left(\epsilon+a^{2}(a_{1}+\delta)\right)\\
 & = & \frac{\epsilon}{-a_{n}-\delta}\left(1+\frac{8\delta(a_{1}+\delta)}{(a_{n-1}-\delta)(-a_{n}-\delta)}\right)\end{eqnarray*}
We choose $\beta$ to be any value satisfying the above inequality.

Next, we choose $\rho$ to be the smallest value such that $S_{-}\cap(\mathbb{R}^{n-1}\times[-\beta,\beta])\cap\mathbb{B}(\mathbf{0},\theta)\subset\tilde{C}_{1}\cup\tilde{C}_{2}$.
This calculation is similar to the calculation of $a$, which gives
\begin{eqnarray*}
(a_{n-1}-\delta)\rho^{2}+(a_{n}-\delta)\beta^{2} & = & -\epsilon\\
\Rightarrow\rho & = & \sqrt{\frac{-\epsilon-(a_{n}-\delta)\beta^{2}}{a_{n-1}-\delta}}.\end{eqnarray*}
We shall not expand the terms, but remark that $\beta$ and $\rho$
are of $O(\sqrt{\epsilon})$.

The proof of Proposition \ref{pro:alg-prelim} tells us that $\conv(\tilde{C}_{1}\cup\tilde{C}_{2})\cap\lev_{\leq-\epsilon}f$
is a union of the two nonempty sets $\tilde{C}_{1}\cap\lev_{\leq-\epsilon}f$
and $\tilde{C}_{2}\cap\lev_{\leq-\epsilon}f$. It remains to show
that these two sets are strictly convex. 

Any point $x\in\tilde{C}_{1}$ can be written as\[
x=(\mathbf{x}^{\prime},x_{n}),\]
 where $\mathbf{x}^{\prime}\in\mathbb{R}^{n-1}$ is of norm at most
$\rho$, and $-\beta\leq x_{n}\leq-b$, where $\beta$ is as calculated
above and $b=\sqrt{\frac{\epsilon}{-a_{n}+\delta}}$ as in Figure
\ref{fig:Local-saddle}. This implies that \[
Hx=(\mathbf{x}^{\prime\prime},a_{n}x_{n}),\]
where $\mathbf{x}^{\prime\prime}$ is of norm at most $a_{1}\left|\mathbf{x}^{\prime}\right|$.
It is clear that as $\delta\downarrow0$, the unit vector in the direction
of $Hx$ converges to $(\mathbf{0},1)$. This implies that for any
$\kappa_{1}>0$, there exists some $\delta>0$ such that $\mbox{unit}(\nabla f(x))\cdot(\mathbf{0},1)\geq1-\kappa_{1}$
for all $x\in C_{1}$. (Note that $\tilde{C}_{1}$ depends on $\delta$.)
Here, $\mbox{unit}:\mathbb{R}^{n}\backslash\left\{ \mathbf{0}\right\} \rightarrow\mathbb{R}^{n}$
is the mapping of a nonzero vector to the unit vector pointing in
the same direction.

Let $z_{1}$ and $z_{2}$ be points in $\tilde{C}_{1}\cap(\lev_{\leq-\epsilon}f)$.
Suppose that $z_{1}(n)<z_{2}(n)$, and let $\mathbf{v}=(\mathbf{v}_{1},v_{2})\in\mathbb{R}^{n-1}\times\mathbb{R}$
be a unit vector in the same direction as $z_{2}-z_{1}$. We further
assume, by reducing $\theta$ and $\delta$ as necessary, that $\|H(x)-H(\mathbf{0})\|<\kappa_{2}$
for all $x\in\tilde{C}_{1}\cap(\lev_{\leq-\epsilon}f)$. Suppose $\kappa_{1}$
and $\kappa_{2}$ are small enough so that $\sqrt{2\kappa_{1}}<\sqrt{\frac{a_{n-1}-\kappa_{2}}{a_{n-1}-a_{n}}}$.

Note that $v_{2}\geq0$. Either one of these two cases on $v_{2}$
must hold. We prove that in both cases, he open line segment $(z_{1},z_{2})$
lies in the interior of $(\lev_{\leq-\epsilon}f)\cap\tilde{C}_{1}$.

\textbf{Case 1: $v_{2}>\sqrt{2\kappa_{1}}$.}

In this case, for all $x\in\tilde{C}_{1}$, we have \begin{eqnarray*}
\mathbf{v}\cdot(\mbox{unit}(\nabla f(x))) & = & \mathbf{v}\cdot(\mathbf{0},1)+\mathbf{v}\cdot(\mbox{unit}(\nabla f(x))-(\mathbf{0},1))\\
 & \geq & v_{2}-\left|\mathbf{v}\right|\left|\mbox{unit}(\nabla f(x))-(\mathbf{0},1)\right|\\
 & = & v_{2}-\left|\mbox{unit}(\nabla f(x))-(\mathbf{0},1)\right|\\
 & = & v_{2}-\sqrt{\left|\mbox{unit}(\nabla f(x))\right|^{2}+\left|(\mathbf{0},1)\right|^{2}-2\mbox{unit}(\nabla f(x))\cdot(\mathbf{0},1)}\\
 & > & v_{2}-\sqrt{2-2(1-\kappa_{1})}\\
 & = & v_{2}-\sqrt{2\kappa_{1}}\\
 & > & 0.\end{eqnarray*}
This means that along the line segment $[z_{1},z_{2}]$, the function
$f$ is strictly monotone. Therefore, if $x_{1},x_{2}\in(\lev_{\leq-\epsilon}f)\cap\tilde{C}_{1}$,
the open line segment $(z_{1},z_{2})$ lies in the interior of $(\lev_{\leq-\epsilon}f)\cap\tilde{C}_{1}$.

\textbf{Case 2: $v_{2}<\sqrt{\frac{a_{n-1}-\kappa_{2}}{a_{n-1}-a_{n}}}$.}

Let $H^{u}(\mathbf{0})$ denote the diagonal matrix of size $(n-1)\times(n-1)$
with elements $a_{1},\dots,a_{n-1}$. We have\begin{eqnarray*}
\mathbf{v}^{\top}H(x)\mathbf{v} & = & \mathbf{v}^{\top}H(\mathbf{0})\mathbf{v}+\mathbf{v}^{\top}(H(x)-H(\mathbf{0}))\mathbf{v}\\
 & > & \mathbf{v}_{1}^{\top}H^{u}(\mathbf{0})\mathbf{v}_{1}+a_{n}v_{2}^{2}-\left|\mathbf{v}\right|^{2}\|H(x)-H(\mathbf{0})\|\\
 & \geq & a_{n-1}\left|\mathbf{v}_{2}\right|^{2}+a_{n}v_{2}^{2}-\|H(x)-H(\mathbf{0})\|\\
 & > & a_{n-1}(1-v_{2}^{2})+a_{n}v_{2}^{2}-\kappa_{2}\\
 & = & a_{n-1}+v_{2}^{2}(a_{n}-a_{n-1})-\kappa_{2}\\
 & > & a_{n-1}+(\kappa_{2}-a_{n-1})-\kappa_{2}\\
 & \geq & 0\end{eqnarray*}
This means that the function $f$ is strictly convex along the line
segment $[z_{1},z_{2}]$, so if $x_{1},x_{2}\in(\lev_{\leq-\epsilon}f)\cap\tilde{C}_{1}$,
the open line segment $(z_{1},z_{2})$ lies in the interior of $(\lev_{\leq-\epsilon}f)\cap\tilde{C}_{1}$,
concluding the proof of the first part of this result. 

To prove the next statement on the uniqueness of the pair of closest
points, suppose that $(\tilde{x}^{\prime},\tilde{y}^{\prime})$ and
$(\tilde{x}^{\prime\prime},\tilde{y}^{\prime\prime})$ are distinct
pairs whose distance give the distance between the components of $(\lev_{\leq-\epsilon}f)\cap\mathbb{B}(\mathbf{0},\theta)$,
where $\mathbb{B}(\mathbf{0},\theta)$ is as stated in Proposition
\ref{pro:alg-prelim}. If $\epsilon$ is small enough, then $\conv(\tilde{C}_{1}\cup\tilde{C}_{2})$
lies in $\mathring{\mathbb{B}}(\mathbf{0},\theta)$. Then by the strict
convexity of the components of $(\lev_{\leq-\epsilon}f)\cap\conv(\tilde{C}_{1}\cup\tilde{C}_{2})$,
the pair $(\frac{1}{2}(\tilde{x}^{\prime}+\tilde{x}^{\prime\prime}),\frac{1}{2}(\tilde{y}^{\prime}+\tilde{y}^{\prime\prime}))$
lie in the same components, and the distance between this pair of
points must be the same as that for the pairs $(\tilde{x}^{\prime},\tilde{y}^{\prime})$
and $(\tilde{x}^{\prime\prime},\tilde{y}^{\prime\prime})$. The closest
points in the components of $[\frac{1}{2}(\tilde{x}^{\prime}+\tilde{x}^{\prime\prime}),\frac{1}{2}(\tilde{y}^{\prime}+\tilde{y}^{\prime\prime})]\cap\lev_{\leq-\epsilon}f$
give a smaller distance between the components of $(\lev_{\leq-\epsilon}f)\cap\mathbb{B}(\mathbf{0},\theta)$,
which contradicts the optimality of the pairs $(\tilde{x}^{\prime},\tilde{y}^{\prime})$
and $(\tilde{x}^{\prime\prime},\tilde{y}^{\prime\prime})$.
\end{proof}
Note that in the case of $\epsilon=0$, there may be no neighborhood
$U_{0}$ of $\bar{x}$ such that $U_{0}\cap(\lev_{\leq c}f)$ is a
union of two convex sets intersecting only at the critical point.
We also note that $U_{\epsilon}$ depends on $\epsilon$ in our result
above. The following example explains these restrictions.
\begin{example}
Consider the function $f:\mathbb{R}^{2}\rightarrow\mathbb{R}$ defined
by $f(x)=(x_{2}-x_{1}^{2})(x_{1}-x_{2}^{2})$. The shaded area in
Figure \ref{fig:nonconvex-level-sets} is a sketch of $\lev_{\leq0}f$. 

We now explain that the neighborhood $U_{\epsilon}$ defined in Proposition
\ref{pro:2-convex-sets} must depend on $\epsilon$ for this example.
For any open $U$ containing $\mathbf{0}$, we can always find two
points $p$ and $q$ in a component of $(\lev_{<0}f)\cap U$ such
that the line segment $[p,q]$ does not lie in $\lev_{<0}f$. This
implies that the component of $(\lev_{\leq-\epsilon}f)\cap U$ is
not convex if $0<\epsilon\leq-\max(f(p),f(q))$. $\diamond$

\begin{figure}
\includegraphics[scale=0.5]{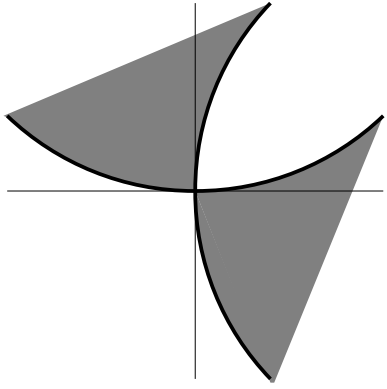}\caption{\label{fig:nonconvex-level-sets}$\lev_{\leq0}f$ for $f(x)=(x_{2}-x_{1}^{2})(x_{1}-x_{2}^{2})$}

\end{figure}

\end{example}
We now take a second look at the problem of minimizing the distance
between two components in step 1(a) of Algorithm \ref{alg:(Mountain-pass-1)}.
We need to solve the following problem for $\epsilon>0$: \begin{eqnarray}
 & \min_{x,y} & \left|x-y\right|\nonumber \\
 & \mbox{s.t.} & x\mbox{ lies in the same component as }a\mbox{ in }(\lev_{\leq f(\bar{x})-\epsilon}f)\cap\mathring{\mathbb{B}}(\bar{x},\theta)\label{eq:ideal-prob}\\
 &  & y\mbox{ lies in the same component as }b\mbox{ in }(\lev_{\leq f(\bar{x})-\epsilon}f)\cap\mathring{\mathbb{B}}(\bar{x},\theta).\nonumber \end{eqnarray}
If $(\tilde{x},\tilde{y})$ is a pair of local optimizers, then $\tilde{y}$
is the closest point to the component of $(\lev_{\leq f(\bar{x})-\epsilon}f)\cap U$
containing $\tilde{x}$ and vice versa. This gives us the following
optimality conditions: \begin{equation}
\begin{array}{r}
\nabla f(\tilde{x})=\kappa_{1}(\tilde{y}-\tilde{x}),\\
\nabla f(\tilde{y})=\kappa_{2}(\tilde{x}-\tilde{y}),\\
f(\tilde{x})=f(\bar{x})-\epsilon\\
f(\tilde{y})=f(\bar{x})-\epsilon\\
\mbox{for some }\kappa_{1},\kappa_{2}\geq0.\end{array}\label{eq:ideal-cond}\end{equation}
From Proposition \ref{pro:2-convex-sets}, we see that given any $\theta>0$
sufficiently small, provided that the conditions in Proposition \ref{pro:alg-prelim}
hold, the global minimizing pair of \eqref{eq:ideal-prob} is unique.
Even though convexity is absent, the following theorem shows that
the global minimizing pair is, under added conditions, the only pair
satisfying the optimality conditions \eqref{eq:ideal-cond}, showing
that there are no other local minimizers of \eqref{eq:ideal-prob}.
\begin{thm}
\label{thm:unique-min-pair}Suppose that $f:\mathbb{R}^{n}\rightarrow\mathbb{R}$
is $\mathcal{C}^{2}$, and $\bar{x}$ is a nondegenerate critical
point of Morse index 1 such that $f(\bar{x})=c$. If $\theta>0$ is
sufficiently small, then for any $\epsilon>0$ (depending on $\theta$)
sufficiently small, the global minimizer of \eqref{eq:ideal-prob}
is the only pair in $\mathring{\mathbb{B}}(\bar{x},\theta)\times\mathring{\mathbb{B}}(\bar{x},\theta)$
satisfying the optimality conditions \eqref{eq:ideal-cond}.\end{thm}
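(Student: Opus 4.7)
Proof plan.
Adopt Assumption \ref{ass:simplify-quad}, so $\bar{x}=\mathbf{0}$, $c=0$, and the Hessian at $\mathbf{0}$ is $H=\mbox{diag}(a_1,\ldots,a_n)$ with $a_{n-1}>0>a_n$. Fix $\delta>0$ small and $\theta>0$ satisfying Assumption \ref{ass:simplify-theta}, small enough that Propositions \ref{pro:alg-prelim} and \ref{pro:2-convex-sets} apply and $\bar{x}$ is the only critical point of $f$ in $\mathring{\mathbb{B}}(\mathbf{0},\theta)$. Consider any $(\tilde{x},\tilde{y})\in\mathring{\mathbb{B}}(\mathbf{0},\theta)^{2}$ satisfying \eqref{eq:ideal-cond}; the goal is to show this pair must coincide with the unique global minimizer guaranteed by Propositions \ref{pro:alg-prelim}(2) and \ref{pro:2-convex-sets}.

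The first step confines $\tilde{x}$ and $\tilde{y}$ to thin cylinders about the $x_n$-axis. Writing $\nabla f(x)=Hx+r(x)$ with $|r(x)|\leq\delta|x|$ on $\mathring{\mathbb{B}}(\mathbf{0},\theta)$ (as in the proof of Lemma \ref{lem:Can-move-left}), the optimality conditions \eqref{eq:ideal-cond} decouple coordinate-wise into the $2\times 2$ linear systems
\begin{equation*}
\begin{pmatrix}a_j+\kappa_1 & -\kappa_1\\ -\kappa_2 & a_j+\kappa_2\end{pmatrix}\begin{pmatrix}\tilde{x}_j\\ \tilde{y}_j\end{pmatrix}=-\begin{pmatrix}r_j(\tilde{x})\\ r_j(\tilde{y})\end{pmatrix}.
\end{equation*}
For $j<n$ the determinant $a_j(a_j+\kappa_1+\kappa_2)\geq a_{n-1}^{2}$ uniformly in $\kappa_1,\kappa_2\geq 0$, so Cramer's rule yields $|\tilde{x}_j|,|\tilde{y}_j|\leq(\delta/a_{n-1})\max(|\tilde{x}|,|\tilde{y}|)$. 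Feeding this into $f(\tilde{x})=f(\tilde{y})=-\epsilon$ via Assumption \ref{ass:simplify-theta} forces $|\tilde{x}|,|\tilde{y}|=O(\sqrt{\epsilon})$, with axial parts $|\tilde{x}_n|,|\tilde{y}_n|=\sqrt{\epsilon/(-a_n)}(1+O(\delta))$ and transverse parts $O(\delta\sqrt{\epsilon})$. So $\tilde{x}\in\tilde{C}_{1}$ and $\tilde{y}\in\tilde{C}_{2}\subset U_{\epsilon}$ from Proposition \ref{pro:2-convex-sets}, and they sit in distinct (strictly convex) components of $(\lev_{\leq-\epsilon}f)\cap U_\epsilon$.

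The second step converts uniqueness into an implicit function theorem application. Rescale by $\tilde{x}=\sqrt{\epsilon}u$, $\tilde{y}=\sqrt{\epsilon}v$, leaving $\kappa_1,\kappa_2$ unscaled; dividing the KKT equations by $\sqrt{\epsilon}$ and the level-set equations by $\epsilon$ yields a family $F_\epsilon(u,v,\kappa_1,\kappa_2)=0$ extending continuously to $\epsilon=0$ as the exact quadratic system
\begin{equation*}
Hu=\kappa_1(v-u),\quad Hv=\kappa_2(u-v),\quad\sum_{j}a_j u_j^{2}=\sum_{j}a_j v_j^{2}=-1.
\end{equation*}
Repeating the determinant argument of Step 1 with $r\equiv 0$ forces $u_j=v_j=0$ for $j<n$ in the quadratic system, so the unique opposite-sheet solution is $(u^{*},v^{*},\kappa_1^{*},\kappa_2^{*})=((\mathbf{0},-1/\sqrt{-a_n}),(\mathbf{0},1/\sqrt{-a_n}),-a_n,-a_n)$.

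The main obstacle is verifying nondegeneracy of the Jacobian of $F_0$ at $(u^{*},v^{*},\kappa_1^{*},\kappa_2^{*})$. The transverse ($j<n$) block becomes $\begin{pmatrix}a_j-a_n & a_n\\ a_n & a_j-a_n\end{pmatrix}$ with determinant $a_j(a_j-2a_n)>0$, so $\delta u_j=\delta v_j=0$. In the axial coordinate, the linearized level-set constraints $2a_n u_n^{*}\delta u_n=0=2a_n v_n^{*}\delta v_n$ (using $u_n^{*},v_n^{*}\neq 0$) force $\delta u_n=\delta v_n=0$, and the axial linearization of the KKT relations then forces $\delta\kappa_1=\delta\kappa_2=0$. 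Injectivity of the Jacobian thus holds, the implicit function theorem produces a unique zero of $F_\epsilon$ for all small $\epsilon$, and $(\tilde{x},\tilde{y})$ must coincide with the global minimizer.
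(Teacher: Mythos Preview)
Your approach is correct in outline but \emph{genuinely different} from the paper's. The paper argues geometrically: if a KKT pair $(\tilde{x},\tilde{y})$ lies in the cylinders $\tilde{C}_1\times\tilde{C}_2$, strict convexity (Proposition~\ref{pro:2-convex-sets}) forces it to be the global minimizer; the bulk of the proof is a case analysis ruling out $\tilde{x}\notin\tilde{C}_1$ by exhibiting, in each case, a feasible direction that decreases $|\tilde{x}-\tilde{y}|$ or a contradiction with the tangent structure of the set $\{x:\nabla f(x)\parallel e_n\}$. Your argument instead linearizes the KKT system directly: the $2\times2$ determinant bound in Step~1 is a clean way to confine any KKT pair to an $O(\delta)$ tube about the $x_n$-axis, and the rescaled implicit function theorem in Step~2 then gives uniqueness without any casework. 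Your route is more analytic and arguably tighter; the paper's route is more hands-on and reuses the geometric machinery already built in Propositions~\ref{pro:alg-prelim}--\ref{pro:2-convex-sets}.

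Two points to clean up. First, an arithmetic slip: from $a_n u_n^*=\kappa_1^*(v_n^*-u_n^*)$ with $u_n^*=-v_n^*$ one gets $\kappa_1^*=\kappa_2^*=-a_n/2$, not $-a_n$; the transverse Jacobian block is then $\bigl(\begin{smallmatrix}a_j-a_n/2 & a_n/2\\ a_n/2 & a_j-a_n/2\end{smallmatrix}\bigr)$ with determinant $a_j(a_j-a_n)>0$, so your nondegeneracy conclusion is unaffected. Second, the implicit function theorem only yields uniqueness in a \emph{fixed} neighbourhood $V$ of $(u^*,v^*,\kappa_1^*,\kappa_2^*)$, so you must say explicitly that $\delta$ (and hence $\theta$) is taken small enough that the $O(\delta)$-confinement from Step~1 (including the bound $\kappa_i=-a_n/2+O(\delta)$, which follows from the $j=n$ equation once $u_n,v_n$ are pinned down) lands every KKT pair inside $V$. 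Relatedly, Step~1 as written does not yet justify that $\tilde{x}_n$ and $\tilde{y}_n$ have opposite signs; a one-line check using the sign of $a_n\tilde{x}_n$ together with $\kappa_1,\kappa_2\ge0$ in the $n$-th coordinate of \eqref{eq:ideal-cond} rules out the same-sign case.
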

\begin{proof}
Suppose that Assumption \ref{ass:simplify-quad} holds, and $\delta$
is chosen small enough so that \ref{ass:simplify-theta} holds. We
also assume that $\theta$ is small enough so that $|H(x)-H(\mathbf{0})|<\frac{1}{2}\min(a_{n-1},-a_{n})$.
Seeking a contradiction, suppose that $(\tilde{x},\tilde{y})$ satisfy
the optimality conditions.

We refer to Figure \ref{fig:Local-saddle}, and also recall the definitions
of the sets $\tilde{C}_{1}$ and $\tilde{C}_{2}$ in the proof of
Proposition \ref{pro:2-convex-sets}. As proven in Proposition \ref{pro:2-convex-sets},
the convexity properties of the two level sets in $(\lev_{\leq f(\bar{x})-\epsilon}f)\cap\mathring{\mathbb{B}}(\bar{x},\theta)$
imply that if $\tilde{x}\in\tilde{C}_{1}$, $\tilde{y}\in\tilde{C}_{2}$
and the optimality conditions are satisfied, then the pair $(\tilde{x},\tilde{y})$
is the global minimizing pair.

Consider the case where $\tilde{x}\notin\tilde{C}_{1}$. Either of
the two cases hold. We note the asymmetry below in that we check whether
$\tilde{y}\in C_{2}$ instead of whether $\tilde{y}\in\tilde{C}_{2}$.

\textbf{Case 1:} $\tilde{y}\in C_{2}$: In this case, if the first
$n-1$ coordinates of $\tilde{x}$ are the same as that of $\tilde{y}$,
then $\tilde{x}$ lies in the interior of $(\lev_{\leq-\epsilon}f)\cap\mathring{\mathbb{B}}(\bar{x},\theta)$,
which is a contradiction to optimality. Recall that the value of $\beta$
was chosen such that $\tilde{y}+(\mathbf{0},\tilde{x}(n)-\tilde{y}(n))$
lies in $(\lev_{\leq-\epsilon}f)\cap\mathring{\mathbb{B}}(\bar{x},\theta)$.
By the convexity of $f|_{L^{\prime}(\tilde{x}(n))}$, where $L^{\prime}(\tilde{x}(n))$
is the affine space $\{x\mid x(n)=\tilde{x}(n)\}$, the line segment
connecting $\tilde{x}$ and $\tilde{y}+(\mathbf{0},\tilde{x}(n)-\tilde{y}(n))$
lies in $(\lev_{\leq-\epsilon}f)\cap\mathring{\mathbb{B}}(\bar{x},\theta)$.
The distance between $\tilde{y}$ and points along this line segment
decreases (at a linear rate) as one moves away from $\tilde{x}$,
which again contradicts the assumption that $(\tilde{x},\tilde{y})$
satisfy \eqref{eq:ideal-cond}.

\textbf{Case 2:} $\tilde{y}\notin C_{2}$: By the convexity of $f|_{L^{\prime}(\tilde{x}(n))}$
and $f|_{L^{\prime}(\tilde{y}(n))}$, the line segments $[\tilde{y},\tilde{y}-(\mathbf{0},\tilde{y}(n))]$
and $[\tilde{x},\tilde{x}-(\mathbf{0},\tilde{x}(n))]$ lie in $(\lev_{\leq-\epsilon}f)\cap\mathring{\mathbb{B}}(\bar{x},\theta)$.
These line segments and the optimality of the pair $(\tilde{x},\tilde{y})$
implies that the first $n-1$ components of $\tilde{x}$ and $\tilde{y}$
to be the same. This in turn implies that $\nabla f(\tilde{x})$ is
a positive multiple of $(\mathbf{0},1)$. 

Our proof ends if we show that if $\theta$ is small enough, $\nabla f(\tilde{x})$
cannot be a positive multiple of $(\mathbf{0},1)$. If $\tilde{x}\notin\tilde{C}_{1}$,
then $\tilde{x}(n)<-\beta$. If $\tilde{x}$ lies on the boundary
of $\lev_{\leq-\epsilon}f$, then $f(\tilde{x})=-\epsilon$, and we
have\begin{eqnarray*}
f(\tilde{x}) & = & -\epsilon\\
\sum_{i=1}^{n}(a_{i}+\delta)\tilde{x}(i)^{2} & \geq & -\epsilon\\
(a_{1}+\delta)\sum_{i=1}^{n}\tilde{x}(i)^{2}+(a_{n}-a_{1})\tilde{x}(n)^{2} & \geq & -\epsilon\\
(a_{1}+\delta)|\tilde{x}|^{2} & \geq & (a_{1}-a_{n})\tilde{x}(n)^{2}-\epsilon\\
\frac{|\tilde{x}|^{2}}{\tilde{x}(n)^{2}} & \geq & \frac{a_{1}-a_{n}-\frac{\epsilon}{\tilde{x}(n)^{2}}}{a_{1}+\delta}\\
 & \geq & 1+\frac{-a_{n}-\delta-\frac{\epsilon}{\beta^{2}}}{a_{1}+\delta}\end{eqnarray*}
Upon expansion of the term $\beta^{2}$ in the expression in the final
line, we see that $\frac{|\tilde{x}|^{2}}{\tilde{x}(n)^{2}}$ is bounded
from below by a constant independent of $\epsilon$ and greater than
$1$. Since $f$ is $\mathcal{C}^{2}$, the set \[
\{x\mid\nabla f(x)\mbox{ is a multiple of }(\mathbf{0},1)\}\cap\mathbb{B}(\mathbf{0},\theta)\]
is a manifold, whose tangent at the origin is the line spanned by
$(\mathbf{0},1)$. This implies that if $\theta$ is small enough,
then $\tilde{x}\notin\tilde{C}_{1}$ and $\tilde{x}$ lying on the
boundary of $\lev_{\leq-\epsilon}f$ implies that $\nabla f(\tilde{x})$
cannot be a multiple of $(\mathbf{0},1)$. We have the required contradiction.\end{proof}
\begin{rem}
\label{rem:heuristic-closest-point}We now describe a heuristic to
approximate a pair of closest points iteratively between the components
of $(\lev_{\leq c-\epsilon}f)\cap U$. For two points $x^{\prime}$
and $y^{\prime}$ that approximate $\tilde{x}_{i}$ and $\tilde{y}_{i}$,
we can find local minimizers of $f$ on the affine spaces orthogonal
to $x^{\prime}-y^{\prime}$ that pass through $x^{\prime}$ and $y^{\prime}$
respectively, say $x^{*}$, $y^{*}$, and then find the closest points
in the two components of $(\lev_{\leq c-\epsilon}f)\cap[x^{*},y^{*}]$,
where $[x^{*},y^{*}]$ is the line segment connecting $x^{*}$ and
$y^{*}$. This heuristic is particularly practical in the case of
Wilkinson problem, as we illuminate in Sections \ref{sec:Wilkinson-1}
and \ref{sec:Wilkinson-2}.
\end{rem}

\section{\label{sec:Global-convergence}Saddle points and criticality properties}

We have seen that Algorithm \ref{alg:globally-convergent-MPT} allows
us to find saddle points of mountain type. In this section, we first
prove an equivalent definition of a saddle point based on paths connecting
two points. Then we prove that saddle points are critical points in
the metric sense and in the nonsmooth sense.

In the following equivalent condition for saddle points, we say that
a path $p:[0,1]\rightarrow X$ \emph{connects} $a$ and $b$ if $p(0)=a$
and $p(1)=b$, and it is \emph{contained in $U\subset X$} if $p([0,1])\subset U$.
The \emph{maximum value} of the path $p$ is defined as $\max_{t}f\circ p(t)$.
\begin{prop}
\label{pro:equiv-mountain}Let $(X,d)$ be a metric space. For a continuous
function $f:X\rightarrow\mathbb{R}$, $\bar{x}$ is a saddle point
of mountain pass type if and only if there exists an open neighborhood
$U$ and two points $a,b\in(\lev_{<l}f)\cap U$ such that 
\begin{enumerate}
\item [(a)] The maximum value of any path connecting $a$ and $b$ contained
in $U$ is at least $f(\bar{x})$, and 
\item [(b)] for all $\epsilon>0$, there exists $\delta,\theta\in(0,\epsilon)$
and a path $p_{\epsilon}$ connecting $a$ and $b$ contained in $U$
such that the maximum value of $p_{\epsilon}$ is at most $f(\bar{x})+\epsilon$,
and $(\lev_{\geq f(\bar{x})-\theta}f)\cap p_{\epsilon}([0,1])\subset\mathbb{B}(\bar{x},\delta)$.
\end{enumerate}
\end{prop}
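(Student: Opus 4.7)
The plan is to prove both directions of the equivalence, using the path-components of $(\lev_{<f(\bar{x})}f)\cap U$ as the bridge between the topological saddle property and the path-level conditions (a)--(b). Set $\alpha:=f(\bar{x})$ throughout.

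For the ($\Leftarrow$) direction, assume (a) and (b), and let $C_1,C_2$ denote the path-components of $(\lev_{<\alpha}f)\cap U$ containing $a$ and $b$. If $C_1=C_2$ there would be a path from $a$ to $b$ inside $\lev_{<\alpha}f$ whose maximum is strictly below $\alpha$, contradicting (a); hence $C_1\neq C_2$. To show $\bar{x}\in\overline{C_1}$, fix $\epsilon>0$, invoke (b) to produce $p_\epsilon,\delta,\theta$, and set $t^*_\epsilon:=\inf\{t:f(p_\epsilon(t))\geq\alpha\}$. Continuity of $f\circ p_\epsilon$ forces $f(p_\epsilon(t^*_\epsilon))=\alpha$. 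For $t<t^*_\epsilon$ the initial segment of $p_\epsilon$ lies in $(\lev_{<\alpha}f)\cap U$ and joins $a$ to $p_\epsilon(t)$, so $p_\epsilon(t)\in C_1$; letting $t\uparrow t^*_\epsilon$ gives $p_\epsilon(t^*_\epsilon)\in\overline{C_1}$. Since $\alpha\geq\alpha-\theta$, property (b) places $p_\epsilon(t^*_\epsilon)$ in $\mathbb{B}(\bar{x},\delta)\subset\mathbb{B}(\bar{x},\epsilon)$. Letting $\epsilon\downarrow 0$ and using closedness of $\overline{C_1}$ yields $\bar{x}\in\overline{C_1}$; a symmetric argument with $\sup\{t:f(p_\epsilon(t))\geq\alpha\}$ in place of $\inf$ gives $\bar{x}\in\overline{C_2}$, so $\bar{x}$ is a saddle point.

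For the ($\Rightarrow$) direction, let $U$ and distinct path-components $C_1,C_2$ come from the saddle point definition, and pick any $a\in C_1$ and $b\in C_2$. Condition (a) is the contrapositive of the distinctness of $C_1$ and $C_2$: a path from $a$ to $b$ in $U$ with maximum strictly less than $\alpha$ would lie in $(\lev_{<\alpha}f)\cap U$, forcing $C_1=C_2$. For (b), given $\epsilon>0$, continuity of $f$ at $\bar{x}$ provides $\eta\in(0,\epsilon)$ with $\mathbb{B}(\bar{x},\eta)\subset U$ and $f<\alpha+\epsilon$ on $\mathbb{B}(\bar{x},\eta)$. Since $\bar{x}\in\overline{C_1}\cap\overline{C_2}$, choose $a_\eta\in C_1\cap\mathbb{B}(\bar{x},\eta/2)$ and $b_\eta\in C_2\cap\mathbb{B}(\bar{x},\eta/2)$, and form $p_\epsilon$ by concatenating a path $p_1$ from $a$ to $a_\eta$ inside $C_1$, a short path $p_m$ from $a_\eta$ to $b_\eta$ inside $\mathbb{B}(\bar{x},\eta)$, and a path $p_2$ from $b_\eta$ to $b$ inside $C_2$. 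Since $p_1,p_2$ lie in $\lev_{<\alpha}f$ and $p_m\subset\mathbb{B}(\bar{x},\eta)$, the maximum of $f$ on $p_\epsilon$ is at most $\alpha+\epsilon$. The compact images of $p_1$ and $p_2$ have maxima $M_1,M_2<\alpha$; choosing $\theta<\min(\epsilon,\alpha-M_1,\alpha-M_2)$ and $\delta\in[\eta,\epsilon)$ ensures $p_1$ and $p_2$ avoid $\lev_{\geq\alpha-\theta}f$ entirely, leaving $(\lev_{\geq\alpha-\theta}f)\cap p_\epsilon([0,1])\subset p_m([0,1])\subset\mathbb{B}(\bar{x},\delta)$, which establishes (b).

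The main obstacle is the construction of the middle path $p_m$ joining $a_\eta$ to $b_\eta$ inside $\mathbb{B}(\bar{x},\eta)$: an abstract metric space need not be locally path-connected at $\bar{x}$, so some mild structural hypothesis is implicit in the forward direction. In the settings of interest (Banach spaces or Riemannian manifolds), shrinking $\eta$ so that a line segment or geodesic between $a_\eta$ and $b_\eta$ lies in $\mathbb{B}(\bar{x},\eta)\cap U$ makes this step automatic, and no further ideas are needed.
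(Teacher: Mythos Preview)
Your proof is correct and follows essentially the same route as the paper's: both directions hinge on the path-components of $(\lev_{<f(\bar{x})}f)\cap U$, with (a) encoding distinctness of the components and (b) realized by concatenating paths inside the two components with a short bridge near $\bar{x}$. Your ($\Leftarrow$) argument is in fact more careful than the paper's, since you explicitly establish $C_1\neq C_2$ from (a) and locate the crossing time $t^*_\epsilon$ precisely, whereas the paper leaves both of these implicit.

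Your closing remark about the middle path $p_m$ is well taken and worth emphasizing: the paper's own proof of the ($\Rightarrow$) direction literally reads ``choose a path $p_\epsilon'$ to be the line segment connecting $x_a$ and $y_b$ contained in $\mathbb{B}(\bar{x},\delta)$,'' so the authors are tacitly working in a setting where small balls are path-connected (a normed space, say). You have identified exactly the structural hypothesis they are silently invoking; in an arbitrary metric space the forward implication as stated need not hold.
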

\begin{proof}
We first prove that the conditions (a) and (b) above imply that $\bar{x}$
is a saddle point. Let $A$ and $B$ be the path connected components
of $\lev_{<f(\bar{x})}f\cap U$ containing $a$ and $b$ respectively.
For any $\epsilon>0$, the condition $(\lev_{\geq f(\bar{x})-\theta}f)\cap p_{\epsilon}([0,1])\subset\mathbb{B}(\bar{x},\delta)$
tells us that we can find points $x_{\epsilon}\in A$ and $y_{\epsilon}\in B$
such that $d(\bar{x},x_{\epsilon})<\delta<\epsilon$ and $d(\bar{x},y_{\epsilon})<\epsilon$.
For a sequence $\epsilon_{i}\searrow0$, we set $x_{i}=x_{\epsilon_{i}}$
and $y_{i}=y_{\epsilon_{i}}$. This shows that $\bar{x}$ lies in
both the closure of $A$ and that of $B$, and hence $\bar{x}$ is
a saddle point.

Next, we prove the converse. Suppose that $\bar{x}$ is a saddle point,
with $U$ being a neighborhood of $\bar{x}$, and the sets $A$ and
$B$ are two path components of $(\lev_{<f(\bar{x})}f)\cap U$ whose
closures contain $\bar{x}$. For any $\epsilon>0$, we can find some
$\delta\in(0,\epsilon)$ such that $d(x,\bar{x})<\delta$ implies
$\left|f(x)-f(\bar{x})\right|<\epsilon$. There are two points $x_{\epsilon}\in A$
and $y_{\epsilon}\in B$ such that $d(x_{\epsilon},\bar{x})<\delta$
and $d(y_{\epsilon},\bar{x})<\delta$. 

Let $a$ and $b$ be any two points in the sets $A$ and $B$ respectively.
There is a path connecting $a$ to $x_{\epsilon}$ contained in $\lev_{<f(\bar{x})}f\cap U$,
say $p_{a}$, and we can similarly find a path $p_{b}$ connecting
$y_{\epsilon}$ to $b$ contained in $\lev_{<f(\bar{x})}f\cap U$.
The maximum values on both paths $p_{a}$ and $p_{b}$ are less than
$f(\bar{x})$, so there is some $\theta\in(0,\epsilon)$ such that
both maximum values are bounded above by $f(\bar{x})-\theta$. Choose
a path $p_{\epsilon}^{\prime}$ to be the line segment connecting
$x_{a}$ and $y_{b}$ contained in $\mathbb{B}(\bar{x},\delta)$.
The path $p_{\epsilon}$ formed by the concatenation of the paths
$p_{a}$, $p_{\epsilon}^{\prime}$ and $p_{b}$ satisfies condition
(b). Condition (a) is easily seen to be satisfied, and hence we are
done.
\end{proof}
Ideally, we want to improve condition (b) in Proposition \ref{pro:equiv-mountain}
so that $\bar{x}$ is the maximum point on some mountain pass connecting
$a$ and $b$. We shall see in Example \ref{exa:saddle-point-no-b'}
that saddle points in general need not have this property. A simple
finite dimensional condition on the function $f$ so that this happens
is semi-algebraicity. A set in $\mathbb{R}^{n}$ is  \emph{semi-algebraic
}if it is a union of finitely many sets defined by finitely many polynomial
inequalities, and a function $f:\mathbb{R}^{n}\rightarrow\mathbb{R}$
is \emph{semi-algebraic }if its graph $\{(x,y)\in\mathbb{R}^{n}\times\mathbb{R}\mid y=f(x)\}$
is a semi-algebraic set. Semi-algebraic objects remove much of the
oscillatory behavior that typically does not appear in applications,
and form a large class of objects that appear in applications. We
will appeal to semi-algebraic geometry for only the next result, and
we refer readers interested in the general theory of semi-algebraic
functions (and more generally, that of o-minimal structures and tame
topology, under which Proposition \ref{pro:semi-algebraic-mtn} also
holds) to \cite{BR90,Coste02,Coste99,vdDries98}.
\begin{prop}
\label{pro:semi-algebraic-mtn}In the case where $f:\mathbb{R}^{n}\rightarrow\mathbb{R}$
is semi-algebraic, condition (b) in Proposition \ref{pro:equiv-mountain}
can be replaced with 
\begin{enumerate}
\item [(b$^{\prime}$)] There is a path connecting $a$ and $b$ contained
in $U$ along which the unique maximizer is $\bar{x}$.
\end{enumerate}
\end{prop}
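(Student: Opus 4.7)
The plan is to apply the semi-algebraic curve selection lemma to each of the two path components near $\bar{x}$. First observe that (b$^\prime$) $\Rightarrow$ (b) is immediate: if $p$ has $\bar{x}$ as its unique maximizer, then by continuity of $f \circ p$ the preimage of $\lev_{\geq f(\bar{x}) - \theta} f$ under $p$ shrinks to $p^{-1}(\bar{x})$ as $\theta \searrow 0$, so its image can be confined to any prescribed ball about $\bar{x}$. Thus (a) and (b$^\prime$) imply conditions (a)(b) of Proposition \ref{pro:equiv-mountain}, and the ``if'' direction of the modified equivalence follows. The content lies in the converse.

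Assume $\bar{x}$ is a saddle point and fix a small radius $r>0$ so that the open ball $V := \mathring{\mathbb{B}}(\bar{x},r)$ lies inside the witnessing neighborhood from Definition \ref{def:saddle-point}. The set $(\lev_{<f(\bar{x})}f)\cap V$ is semi-algebraic and therefore has finitely many path components, each itself semi-algebraic (since for semi-algebraic sets the path components coincide with the connected components and are semi-algebraic). Since $\bar{x}$ lies in the closure of two of the original path components, a pigeonhole argument applied to sequences converging to $\bar{x}$ yields two distinct path components $A,B$ of $(\lev_{<f(\bar{x})}f)\cap V$ with $\bar{x}\in \overline{A}\cap\overline{B}$. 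Fix $a \in A$ and $b \in B$.

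Apply the semi-algebraic curve selection lemma to $\bar{x}\in\overline{A}$ to obtain a continuous semi-algebraic curve $\gamma_A:[0,1]\to\mathbb{R}^n$ with $\gamma_A(0)=\bar{x}$ and $\gamma_A((0,1])\subset A$, and analogously produce $\gamma_B$ for $B$. The strict inequality $f<f(\bar{x})$ holds on $\gamma_A((0,1])\cup\gamma_B((0,1])$ since both curves stay in $\lev_{<f(\bar{x})}f$. Using the path connectedness of $A$ and $B$, pick continuous paths $\alpha$ in $A$ from $a$ to $\gamma_A(1)$ and $\beta$ in $B$ from $\gamma_B(1)$ to $b$. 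Concatenating $\alpha$, the reverse of $\gamma_A$, $\gamma_B$, and $\beta$ yields a continuous path in $V$ from $a$ to $b$ which hits $\bar{x}$ only at the joining point and elsewhere lies in $A\cup B\subset\lev_{<f(\bar{x})}f$. Hence $\bar{x}$ is the unique maximizer, establishing (b$^\prime$).

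The main obstacle is the semi-algebraic bookkeeping: reducing from an arbitrary open neighborhood to a semi-algebraic one so that the components $A,B$ themselves are semi-algebraic, and invoking curve selection together with the equivalence of connectedness and path connectedness in the semi-algebraic category. Once these tools from the references \cite{BR90,Coste02,Coste99,vdDries98} are in place, the concatenation producing the desired path is elementary.
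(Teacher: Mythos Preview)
Your argument is correct and follows the same strategy as the paper: invoke the semi-algebraic curve selection lemma on each of the two components $A,B$ to obtain arcs landing at $\bar{x}$, then concatenate with paths inside $A$ and $B$ to reach the prescribed endpoints. You are in fact more careful than the paper on two points: you explicitly pass to a semi-algebraic ball $V\subset U$ so that the components are genuinely semi-algebraic (the paper asserts this for an arbitrary open $U$ without comment), and you separate the curve-selection arc from the in-component path to the chosen endpoint, whereas the paper tacitly merges these.
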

\begin{proof}
It is clear that (b$^{\prime}$) is a stronger condition than (b),
so we prove that if $f$ is semi-algebraic, then (b$^{\prime}$) holds.
Suppose $\bar{x}$ is a saddle point of mountain pass type. Let $U$
be an open neighborhood of $\bar{x}$, and sets $A$ and $B$ be two
components of $(\lev_{<f(\bar{x})}f)\cap U$ whose closures contain
$\bar{x}$. Choose points $a\in A$ and $b\in B$. It is clear that
$A$ and $B$ are semi-algebraic (see for example \cite[Section 3.2]{Coste99}.
By the curve selection lemma (see for example \cite[Section 3.1]{Coste99}),
there is a path $p_{a}$ connecting $a$ and $\bar{x}$ such that
$p_{a}(1)=\bar{x}$, and $p_{a}([0,1))\subset A$. Similarly, we can
find a path $p_{b}$ connecting $\bar{x}$ and $b$ such that $p_{b}(0)=\bar{x}$
and $p_{b}((0,1])\subset B$. The concatenation of $p_{a}$ and $p_{b}$
gives us what we need.
\end{proof}
In the absence of semi-algebraicity, the following example illustrates
that a saddle point need not satisfy condition (b$^{\prime}$). 
\begin{example}
\label{exa:saddle-point-no-b'}We define $f:\mathbb{R}^{2}\rightarrow\mathbb{R}$
through Figure \ref{fig:bad-saddle}. There are 2 shapes in the positive
quadrant the figure: a blue {}``comb'' $C$ wrapping around a brown
{}``sun'' $S$. The closure of $C$ contains the origin $\mathbf{0}$
(the intersection of the horizontal and vertical axis).

We can define a continuous $f:\mathbb{R}^{2}\rightarrow\mathbb{R}$
so that $f$ is negative on $C\cup(-C)$ and positive on $(S\cup(-S))\backslash\{\mathbf{0}\}$
and $\{(x,y)\mid xy<0\}$, and extend $f$ continuously to all of
$\mathbb{R}^{2}$ using the Tietze extension theorem. It is clear
that $\mathbf{0}$ is a saddle point, and the sets $A,B\subset\lev_{<0}f$
whose closures contain $\mathbf{0}$ can be taken to be the path connected
components containing $C$ and $(-C)$ respectively. But the origin
$\mathbf{0}$ does not satisfy condition (b$^{\prime}$).

\begin{figure}
\includegraphics[scale=0.5]{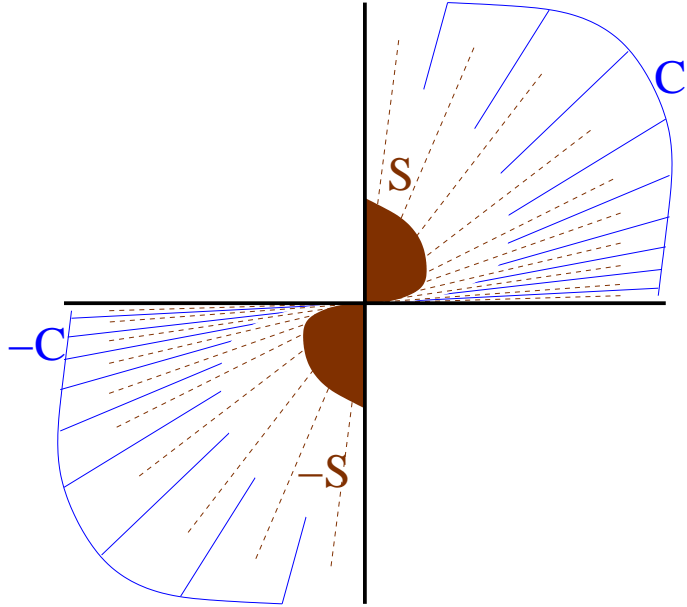}\caption{\label{fig:bad-saddle}Illustration of saddle point in Example \ref{exa:saddle-point-no-b'}.}

\end{figure}

\end{example}

Our next step is to establish the relation between saddle points and
criticality in metric spaces. We recall the following definitions
in metric critical point theory from \cite{DM94,IS96,Katriel94}.
\begin{defn}
\label{def:Deformation-critical}Let $(X,d)$ be a metric space. We
call the point $x$ \emph{Morse regular} for the function $f:X\rightarrow\mathbb{R}$
if, for some numbers $\gamma,\sigma>0$, there is a continuous function
\[
\phi:\mathbb{B}(x,\gamma)\times[0,\gamma]\rightarrow X\]
 such that all points $u\in\mathbb{B}(x,\gamma)$ and $t\in[0,\gamma]$
satisfy the inequality \[
f(\phi(x,t))\leq f(x)-\sigma t,\]
and that $\phi(\cdot,0)$ is the identity map. The point $x$ is \emph{Morse
critical }if it is not Morse regular. 

If there is some $\kappa>0$ and such a function $\phi$ that also
satisfies the inequality \[
d(\phi(x,t),x)\leq\kappa t,\]
then we call $x$ \emph{deformationally regular}. The point $x$ is
\emph{deformationally critical }if it is not deformationally regular.
\end{defn}
We now relate saddle points to Morse critical and deformationally
critical points.
\begin{prop}
For a function $f:X\rightarrow\mathbb{R}$ defined on a metric space
$X$, $\bar{x}$ is a saddle point of mountain pass type implies that
$\bar{x}$ is deformationally critical. If in addition, either $X=\mathbb{R}^{n}$
or condition (b$^{\prime}$) in Proposition \ref{pro:semi-algebraic-mtn}
holds, then $\bar{x}$ is Morse critical.\end{prop}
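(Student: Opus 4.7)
My plan for both claims is proof by contradiction: assuming $\bar x$ is regular (deformationally or Morse), use Proposition~\ref{pro:equiv-mountain} to extract a near-optimal path joining $a$ to $b$ in $U$, then edit it via the regularity deformation $\phi$ into a path whose maximum value lies strictly below $f(\bar x)$, contradicting condition~(a).

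For the deformationally critical conclusion, fix $U,a,b$ satisfying (a) and (b), and let $\phi:\mathbb{B}(\bar x,\gamma)\times[0,\gamma]\to X$ be the deformation with constants $\sigma,\kappa>0$. I would first pick $t_0\in(0,\gamma)$ so small that $\kappa t_0$ is less than the distance from $\bar x$ to $X\setminus U$, then pick $\epsilon$ smaller than $\min(\sigma t_0,\,f(\bar x)-f(a),\,f(\bar x)-f(b),\,\gamma)$, and invoke condition~(b) to obtain $p,\theta,\delta$. I would build a continuous cutoff $\eta:[0,1]\to[0,1]$ by precomposing the piecewise-linear function that equals $1$ on $[-\theta/2,\infty)$ and $0$ on $(-\infty,-\theta]$ with $s\mapsto f(p(s))-f(\bar x)$, and set $\tilde p(s)=\phi(p(s),\eta(s)t_0)$ where $\eta(s)>0$ and $\tilde p(s)=p(s)$ otherwise. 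Since $\eta(s)>0$ forces $p(s)\in\mathbb{B}(\bar x,\delta)\subset\mathbb{B}(\bar x,\gamma)$, the deformation is defined there, and $\tilde p$ is continuous across $\{\eta=0\}$ thanks to $\phi(\cdot,0)=\mathrm{id}$ and $\delta<\gamma$. The Lipschitz clause keeps $\tilde p([0,1])\subset U$; the choice of $\epsilon$ forces $\eta(0)=\eta(1)=0$, so $\tilde p$ still joins $a$ to $b$; and a three-case split on $\eta$ (using $\epsilon<\sigma t_0$ when $\eta=1$, the gap $f\circ p<f(\bar x)-\theta/2$ when $0<\eta<1$, and $\tilde p=p$ when $\eta=0$) yields $\max f\circ\tilde p<f(\bar x)$, the required contradiction.

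The Morse case reuses the same construction with a Morse-regular $\phi$; the only missing ingredient is the containment $\tilde p([0,1])\subset U$, which I have to secure without the Lipschitz clause. Both extra hypotheses achieve this through compactness plus uniform continuity of $\phi$. If $X=\mathbb{R}^n$ then $K:=p([0,1])\cap\overline{\mathbb{B}}(\bar x,\delta)$ is compact and strictly inside $U$, so uniform continuity of $\phi$ on $K\times[0,\gamma]$ lets me shrink $t_0$ until $\phi(K\times\{t_0\})\subset U$. If instead condition~(b$'$) holds, I would replace $p$ by a path $q$ whose unique maximizer is $\bar x=q(s^*)$, use a parameter-based cutoff supported in a small interval $[s^*-\alpha,s^*+\alpha]$, and deform only on the compact image $q([s^*-\alpha,s^*+\alpha])\subset U$; the decrease-of-$f$ analysis then uses the uniform gap $\sup_{s\notin[s^*-\alpha,s^*+\alpha]}f(q(s))<f(\bar x)$ in place of $\theta$.

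The main obstacle is precisely this containment step $\tilde p([0,1])\subset U$: deformational regularity provides it for free via the Lipschitz clause, whereas Morse regularity does not, which is exactly why the Morse conclusion requires one of the two extra hypotheses. A subsidiary worry is continuity of $\tilde p$ across $\{\eta=0\}$ when $p(s)\in\overline{\mathbb{B}}(\bar x,\delta)\setminus\mathbb{B}(\bar x,\delta)$, and this is handled by $\delta<\gamma$ together with $\phi(\cdot,0)=\mathrm{id}$.
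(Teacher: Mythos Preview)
Your strategy matches the paper's: assume regularity, use the deformation $\phi$ to push a near-optimal path strictly below $f(\bar x)$, and contradict condition~(a). The paper's implementation differs only cosmetically---it deforms a short path $p'\subset\mathbb{B}(\bar x,\delta)$ by the full amount $\gamma'$ and splices in two ``ramp'' segments $t\mapsto\phi(a_\delta,\gamma't)$ and $t\mapsto\phi(b_\delta,\gamma'(3-t))$ at the ends, rather than using your cutoff $\eta$; both constructions are standard deformation-lemma arguments.

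There is, however, a genuine order-of-choice problem in your Morse $+\,\mathbb{R}^n$ case. You fix $t_0$ first, then choose $\epsilon<\sigma t_0$, then invoke condition~(b) to produce $p,\theta,\delta$, and only \emph{afterwards} propose to ``shrink $t_0$'' based on the compact set $K=p([0,1])\cap\overline{\mathbb{B}}(\bar x,\delta)$. But $K$ depends on $p$ and $\delta$, which depend on $\epsilon$, which was chosen to satisfy $\epsilon<\sigma t_0$; shrinking $t_0$ now breaks that inequality, and re-choosing $\epsilon$ changes $p$ and $\delta$ and hence $K$. The fix is to use a path-independent compact set \emph{before} choosing $t_0$: since $X=\mathbb{R}^n$, the closed ball $\overline{\mathbb{B}}(\bar x,\gamma)$ (shrinking $\gamma$ if needed so it lies in $U$) is compact, and continuity of $\phi$ together with $\phi(\cdot,0)=\mathrm{id}$ yields $t_0>0$ with $\phi\big(\overline{\mathbb{B}}(\bar x,\gamma)\times[0,t_0]\big)\subset U$. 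Now choose $\epsilon<\sigma t_0$ and proceed; the later $\delta<\epsilon<\gamma$ automatically lands the deformed portion in $U$. This is exactly how the paper handles it. (Your $(b')$ case has no such circularity, since the path $q$ there is fixed at the outset and does not depend on $t_0$ or $\epsilon$.) Also note you need $\phi(K\times[0,t_0])\subset U$, not just $\phi(K\times\{t_0\})\subset U$, since $\eta$ ranges over $[0,1]$.
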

\begin{proof}
Let $U$ be an open neighborhood of $\bar{x}$ as defined in Definition
\ref{def:saddle-point}, and let $A$ and $B$ be two distinct components
of $(\lev_{<f(\bar{x})}f)\cap U$ which contain $\bar{x}$ in their
closures. The proofs of all three results by contradiction are similar.
For convenience, we label the following three assumptions as follows,
and prove that they all lead to the contradiction that $A$ and $B$
cannot be distinct path components in $U$.
\begin{enumerate}
\item [$(D)$] $\bar{x}$ is deformationally regular.
\item [$(M_{\mathbb{R}^{n}})$] $\bar{x}$ is Morse regular, and $X=\mathbb{R}^{n}$.
\item [$(M_{b^{\prime}})$] $\bar{x}$ is Morse regular, and condition
(b$^{\prime}$) in Proposition \ref{pro:semi-algebraic-mtn} holds.
\end{enumerate}
Suppose condition $(M_{\mathbb{R}^{n}})$ holds. Let $\gamma,\sigma>0$
and $\phi:\mathbb{B}(\bar{x},\gamma)\times[0,\gamma]\rightarrow X$
satisfy the properties of Morse regularity given in Definition \ref{def:Deformation-critical}.
We can assume that $\gamma$ is small enough so that $\mathbb{B}(\bar{x},\gamma)\subset U$.
By the continuity of $\phi$ and the compactness of $\mathbb{B}(\bar{x},\gamma)$,
there is some $\gamma^{\prime}>0$ such that $\mathbb{B}(\bar{x},\gamma)\times[0,\gamma^{\prime}]\subset\phi^{-1}(U)$.

Next, suppose condition $(D)$ holds. Let $\gamma,\sigma,\kappa>0$
and $\phi:\mathbb{B}(\bar{x},\gamma)\times[0,\gamma]\rightarrow X$
satisfy the properties given in Definition \ref{def:Deformation-critical}
on deformation regularity. We can assume $\gamma>0$ is small enough
and choose $\gamma^{\prime}>0$ so that $\mathbb{B}_{}(\bar{x},\gamma+\gamma^{\prime}\kappa)\subset U$.
The conditions on $\phi$ imply that $\phi\left(\mathbb{B}(\bar{x},\gamma)\times[0,\gamma^{\prime}]\right)\subset\mathbb{B}(\bar{x},\gamma+\gamma^{\prime}\kappa)\subset U$,
which in turn imply that $\mathbb{B}(\bar{x},\gamma)\times[0,\gamma^{\prime}]\subset\phi^{-1}(U)$.

Here is the next argument common to both conditions $(D)$ and $(M_{\mathbb{R}^{n}})$.
By the characterization of saddle points in Proposition \ref{pro:equiv-mountain},
we can find $\theta$ and $\delta$ satisfying the condition in Proposition
\ref{pro:equiv-mountain}(b) with $\theta,\delta\leq\min(\frac{1}{2}\gamma^{\prime}\sigma,\gamma)$.
This gives us $\mathbb{B}(\bar{x},\delta)\subset\mathbb{B}(\bar{x},\gamma)\subset U$
in particular. We can glean from the proof of Proposition \ref{pro:equiv-mountain}
that we can find two points $a_{\delta}\in A\cap\mathbb{B}(\bar{x},\delta)$
and $b_{\delta}\in B\cap\mathbb{B}(\bar{x},\delta)$ and a path $p^{\prime}:[0,1]\rightarrow X$
connecting $a_{\delta}$ and $b_{\delta}$  contained in $\mathbb{B}(\bar{x},\delta)$
with maximum value at most $f(\bar{x})+\min(\frac{1}{2}\gamma^{\prime}\sigma,\gamma)$.
The functions values $f(a_{\delta})$ and $f(b_{\delta})$ satisfy
$f(a_{\delta}),f(b_{\delta})\leq f(\bar{x})-\theta$. The condition
$\mathbb{B}(\bar{x},\gamma)\times[0,\gamma^{\prime}]\subset\phi^{-1}(U)$
implies that $p^{\prime}([0,1])\times[0,\gamma^{\prime}]\subset\phi^{-1}(U)$.

If condition $(M_{b^{\prime}})$ holds, then for any $\delta>0$,
we can find a path $p^{\prime}:[0,1]\rightarrow X$ connecting two
points $a_{\delta}\in A\cap\mathbb{B}(\bar{x},\delta)$ and $b_{\delta}\in B\cap\mathbb{B}(\bar{x},\delta)$
contained in $\mathbb{B}(\bar{x},\delta)$ with maximum value at most
$f(\bar{x})$. There is also some $\theta>0$ such that $f(a_{\delta}),f(b_{\delta})<f(\bar{x})-\theta$.
Let $\gamma,\sigma>0$ and $\phi:\mathbb{B}(\bar{x},\gamma)\times[0,\gamma]\rightarrow X$
be such that they satisfy the properties of Morse regularity. By the
compactness of $p^{\prime}([0,1])$, we can find some $\gamma^{\prime}>0$
such that $p^{\prime}([0,1])\times[0,\gamma^{\prime}]\subset\phi^{-1}(U)$. 

To conclude the proof for all three cases, consider the path $\bar{p}:[0,3]\rightarrow X$
defined by\[
\bar{p}(t)=\begin{cases}
\phi(a_{\delta},\gamma^{\prime}t) & \mbox{ for }0\leq t\leq1\\
\phi(p^{\prime}(t-1),\gamma^{\prime}) & \mbox{ for }1\leq t\leq2\\
\phi(b_{\delta},\gamma^{\prime}(3-t)) & \mbox{ for }2\leq t\leq3.\end{cases}\]
This path connects $a_{\delta}$ and $b_{\delta}$, is contained in
$U$ and has maximum value at most $\max(f(\bar{x})-\theta,f(\bar{x})-\frac{1}{2}\gamma^{\prime}\sigma)$,
which is less than $f(\bar{x})$. This implies that $A$ and $B$
cannot be distinct path connected components of $(\lev_{<f(\bar{x})}f)\cap U$,
which establishes the contradiction in all three cases.
\end{proof}
We now move on to discuss how saddle points and deformationally
critical points relate to nonsmooth critical points. Here is the definition
of Clarke critical points.
\begin{defn}
\cite[Section 2.1]{Cla83} Let $X$ be a Banach space. Suppose $f:X\rightarrow\mathbb{R}$
is locally Lipschitz. The \emph{Clarke generalized directional derivative}
of $f$ at $x$ in the direction $v\in X$ is defined by\[
f^{\circ}(x;v)=\limsup_{t\searrow0,y\rightarrow x}\frac{f(y+tv)-f(y)}{t},\]
where $y\in X$ and $t$ is a positive scalar. The \emph{Clarke subdifferential}
of $f$ at $x$, denoted by $\partial_{C}f(x)$, is the convex subset
of the dual space $X^{*}$ given by\[
\{\zeta\in X^{*}\mid f^{\circ}(x;v)\geq\left\langle \zeta,v\right\rangle \mbox{ for all }v\in X\}.\]
The point $x$ is a \emph{Clarke (nonsmooth) critical point} if $\mathbf{0}\in\partial_{C}f(x)$.
Here, $\left\langle \cdot,\cdot\right\rangle :X^{*}\times X\rightarrow\mathbb{R}$
defined by $\left\langle \zeta,v\right\rangle :=\zeta(v)$ is the
dual relation.
\end{defn}
For the particular case of $\mathcal{C}^{1}$ functions, $\partial_{C}f(x)=\{\nabla f(x)\}$.
Therefore a critical point of a smooth function (i.e., a point $x$
that satisfies $\nabla f(x)=\mathbf{0}$) is also a Clarke critical
point. From the definitions above, it is clear that an equivalent
definition of a Clarke critical point is $f^{\circ}(x;v)\geq0$ for
all $v\in X$. This property allows us to deduce Clarke criticality
without appealing to the dual space $X^{*}$.

Clarke (nonsmooth) critical points of $f$ are of interest in, for
example, partial differential equations with discontinuous nonlinearities.
Critical point existence theorems for nonsmooth functions first appeared
in \cite{Chang81,Shi85}. For the problem of finding nonsmooth critical
points numerically, we are only aware of \cite{YZ05}. 

The following result is well-known, and we include its proof for completeness.
\begin{prop}
Let $X$ be a Banach space and $f:X\rightarrow\mathbb{R}$ be locally
Lipschitz at $\bar{x}$. If $\bar{x}$ is deformationally critical,
then it is Clarke critical.\end{prop}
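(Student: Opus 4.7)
The natural approach is to prove the contrapositive: if $\bar{x}$ is not Clarke critical, then it is deformationally regular. So assume $\mathbf{0} \notin \partial_C f(\bar{x})$. Since $\partial_C f(\bar{x})$ is a nonempty convex weak-$*$ closed subset of $X^*$ (the standard properties of the Clarke subdifferential for locally Lipschitz functions), by a Hahn--Banach separation argument in $X^*$ we can find a unit vector $v \in X$ and a scalar $\sigma > 0$ such that
\[
\langle \zeta, v \rangle \;\geq\; 2\sigma \qquad \text{for every } \zeta \in \partial_C f(\bar{x}).
\]
Using the support-function representation $f^\circ(\bar{x}; w) = \sup\{\langle \zeta, w\rangle : \zeta \in \partial_C f(\bar{x})\}$, this says $f^\circ(\bar{x}; -v) \leq -2\sigma$.

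Next, I unfold the limsup in the definition of $f^\circ(\bar{x}; -v)$: taking $\epsilon = \sigma$, there exists $\gamma > 0$ (which I may also shrink so that $\mathbb{B}(\bar{x},\gamma) \subset \dom f$ and $f$ is Lipschitz there) such that for every $y \in \mathbb{B}(\bar{x}, \gamma)$ and every $t \in (0, \gamma]$,
\[
\frac{f(y - tv) - f(y)}{t} \;\leq\; -\sigma.
\]

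Finally, I define the deformation
\[
\phi : \mathbb{B}(\bar{x}, \gamma) \times [0, \gamma] \longrightarrow X, \qquad \phi(u, t) := u - tv.
\]
Continuity of $\phi$ and $\phi(\cdot, 0) = \mathrm{id}$ are immediate. The displayed inequality above yields $f(\phi(u,t)) \leq f(u) - \sigma t$ for all $u \in \mathbb{B}(\bar{x}, \gamma)$ and $t \in [0, \gamma]$ (the case $t = 0$ being trivial), which is the Morse-regularity bound. For deformational regularity, one takes $\kappa = \|v\| = 1$ and observes $d(\phi(u,t), u) = \|tv\| = t \leq \kappa t$. Thus $\bar{x}$ is deformationally regular, contradicting the hypothesis and completing the proof.

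The only real obstacle is the separation step producing the pair $(v,\sigma)$; everything else is bookkeeping from the limsup definition of $f^\circ$. In the infinite-dimensional setting one has to be mildly careful that separation is carried out in $X^*$ with its weak-$*$ topology, whose continuous linear functionals are precisely evaluations at points of $X$, so that the separating functional can indeed be realized by a vector $v \in X$; this is standard for $\partial_C f(\bar{x})$ which is weak-$*$ compact when $f$ is locally Lipschitz.
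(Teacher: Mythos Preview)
Your proof is correct and follows essentially the same route as the paper's: both prove the contrapositive by extracting a direction $v$ of strictly negative Clarke directional derivative and then taking the linear deformation $\phi(u,t)=u-tv$. The paper simply invokes the equivalent characterization ``$\bar{x}$ is Clarke critical iff $f^\circ(\bar{x};v)\ge 0$ for all $v$'' in one line, whereas you spell out the underlying separation argument and verify the deformation-regularity inequalities explicitly; the extra care is fine but not strictly needed here.
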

\begin{proof}
We prove the contrapositive instead. If the point $\bar{x}$ is not
Clarke critical, there exists a unit vector $v\in X$ such that \[
\limsup_{t\searrow0,y\rightarrow\bar{x}}\frac{f(y+tv)-f(y)}{t}<0.\]
Now defining $\phi(x,t)=x-tv$ satisfies the conditions for deformation
regularity.
\end{proof}
To conclude, Figure \ref{fig:crit-equiv} summarizes the relationship
between saddle points and the different types of critical points.

\begin{figure}
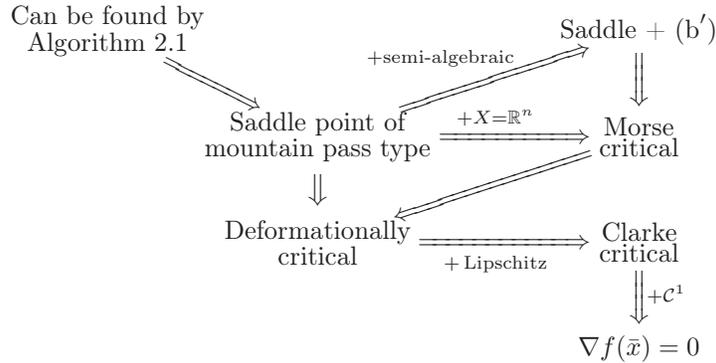

\begin{diagram} 
\substack{\mbox{Can be found by} \\\mbox{Algorithm \ref{alg:globally-convergent-MPT}}} & &  & &     \mbox{Saddle + (b}^\prime\mbox{)} \\ 
& \rdImplies &  & \ruImplies^{+\scriptsize{\mbox{semi-algebraic}}} & \dImplies  \\
& & 
\substack{\mbox{Saddle point of}\\\mbox{mountain pass type}} 
&\rImplies^{+X=\mathbb{R}^n}
&\substack{\mbox{Morse}\\\mbox{critical}}& & \\
& & \big\Downarrow &\ldImplies & & & \\
& & \substack{\mbox{Deformationally}\\\mbox{critical}}
&\rImplies_{+\mathop{\rm{Lipschitz}}}
&\substack{\mbox{Clarke}\\\mbox{critical}} \\
& & & & \dImplies_{+\mathcal{C}^1} \\
& & & & \nabla f(\bar{x})=0
\end{diagram}

\caption{\label{fig:crit-equiv}Different types of critical points}

\end{figure}

\section{\label{sec:Wilkinson-1}Wilkinson's problem: Background}

In Section \ref{sec:Wilkinson-2}, we will apply Algorithm \ref{alg:(Mountain-pass-1)}
to attempt to solve the Wilkinson problem, while we give a background
of the Wilkinson problem in this section. We first define the Wilkinson
problem.
\begin{defn}
Given a matrix $A\in\mathbb{R}^{n\times n}$, the \emph{Wilkinson
distance }of the matrix $A$ is the distance of the matrix $A$ to
the nearest matrix with repeated eigenvalues. The problem of finding
the Wilkinson distance is the \emph{Wilkinson problem}.
\end{defn}
Though not cited explicitly, as noted by \cite{AB05a}, the Wilkinson
problem can be traced back to \cite[pp. 90-93]{Wilk65}. See \cite{ABBO09,key-21,M99}
for more references, and in particular, \cite{ABBO09} and the discussion
in the beginning of \cite[Section 3]{key-21}. 

It is well-known that eigenvalues vary in a Lipschitz manner if and
only if they do not coincide. In fact, eigenvalues are differentiable
in the entries of the matrix when they are distinct. Hence, as discussed
by Demmel \cite{Dem87a}, the Wilkinson distance is a natural condition
measure for accurate eigenvalue computation. The Wilkinson distance
is also important because of its connections with the stability of
eigendecompositions of matrices. To our knowledge, no fast and reliable
numerical method for computing the Wilkinson distance is known. 

 The\emph{ $\epsilon$-pseudospectrum} $\Lambda_{\epsilon}(A)\subset\mathbb{C}$
of $A$ is defined as the set\begin{eqnarray*}
\Lambda_{\epsilon}(A) & := & \left\{ z\mid\exists E\mbox{ s.t. }\Vert E\Vert\leq\epsilon\mbox{ and }z\mbox{ is an eigenvalue of }A+E\right\} \\
 & = & \left\{ z\mid\left|(A-zI)^{-1}\right|^{-1}\leq\epsilon\right\} \\
 & = & \left\{ z\mid\underline{\sigma}(A-zI)\leq\epsilon\right\} ,\end{eqnarray*}
where $\underline{\sigma}(A-zI)$ is the smallest singular value of
$A-zI$. The function $z\mapsto(A-zI)^{-1}$ is sometimes referred
to as the resolvent function, whose (Clarke) critical points are referred
to as \emph{resolvent critical points}. To simplify notation, define
$\underline{\sigma}_{A}:\mathbb{C}\rightarrow\mathbb{R}_{+}$ by \begin{eqnarray*}
\underline{\sigma}_{A}(z) & := & \underline{\sigma}(A-zI)\\
 & = & \mbox{smallest singular value of }(A-zI).\end{eqnarray*}
For more on pseudospectra, we refer the reader to \cite{TE06}.

It is well known that each component of the $\epsilon$-pseudospectrum
$\Lambda_{\epsilon}(A)$ contains at least one eigenvalue. If $\epsilon$
is small enough, $\Lambda_{\epsilon}(A)$ has $n$ components, each
containing an eigenvalue. Alam and Bora \cite{AB05a} proved the following
result on the Wilkinson distance.
\begin{thm}
\cite{AB05a} Let $\bar{\epsilon}$ be the smallest $\epsilon$ for
which $\Lambda_{\epsilon}(A)$ contains $n-1$ or fewer components.
Then $\bar{\epsilon}$ is the Wilkinson distance for $A$. 

For any pair of distinct eigenvalues of $A$, say $\{z_{1},z_{2}\}$,
let the objective of the mountain pass problem with function $\underline{\sigma}_{A}$
and the two chosen eigenvalues as endpoints be $v(z_{1},z_{2})$.
The value $\bar{\epsilon}$ is also equal to \begin{equation}
\min\{v(z_{1},z_{2})\mid z_{1}\mbox{ and }z_{2}\mbox{ are distinct eigenvalues of }A\}.\label{eq:Wilk-min}\end{equation}

\end{thm}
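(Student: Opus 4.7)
The plan is to establish both equalities $\bar\epsilon = d_W$ (the Wilkinson distance) and $\bar\epsilon = v^*$, where $v^*$ denotes the minimum in \eqref{eq:Wilk-min}. I would first handle the pseudospectral/mountain-pass identity, which is essentially a restatement of the observation in the introduction, and then treat the two inequalities relating $\bar\epsilon$ to $d_W$ separately.

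For the identity $\bar\epsilon = v^*$, note that $\Lambda_\epsilon(A) = \lev_{\leq \epsilon}\underline{\sigma}_A$. The observation from the introduction says that $l$ is a lower bound for the mountain pass value from $z_1$ to $z_2$ iff $z_1$ and $z_2$ lie in distinct components of $\lev_{\leq l}\underline{\sigma}_A$. Hence two eigenvalues $z_1, z_2$ share a component of $\Lambda_\epsilon(A)$ iff $v(z_1, z_2) \leq \epsilon$. Combined with the classical fact that each component of $\Lambda_\epsilon(A)$ contains at least one eigenvalue of $A$, we conclude that $\Lambda_\epsilon(A)$ has at most $n-1$ components iff some pair of eigenvalues shares a component iff $\epsilon \geq v^*$. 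Taking the infimum yields $\bar\epsilon = v^*$.

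For $\bar\epsilon \leq d_W$, assume (without loss of generality) that the eigenvalues of $A$ are distinct, otherwise $d_W = \bar\epsilon = 0$. Pick $E$ with $\|E\| = d_W$ such that $A + E$ has a repeated eigenvalue $\lambda$. Continuity of eigenvalues along the homotopy $A + tE$ yields two continuous selections $\lambda_1(t), \lambda_2(t)$ with $\lambda_1(1) = \lambda_2(1) = \lambda$ and $\lambda_1(0), \lambda_2(0)$ distinct eigenvalues of $A$. Since $\underline{\sigma}_A(\lambda_i(t)) \leq \|tE\| \leq d_W$ for every $t$, each trace lies in $\Lambda_{d_W}(A)$, and the union of the two traces is connected through the common endpoint $\lambda$. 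This exhibits two distinct eigenvalues of $A$ inside a single component of $\Lambda_{d_W}(A)$, forcing that pseudospectrum to have at most $n - 1$ components. Hence $\bar\epsilon \leq d_W$.

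The harder direction is $d_W \leq \bar\epsilon$. Fix any $\epsilon > \bar\epsilon$ and choose distinct eigenvalues $\mu_1, \mu_2$ of $A$ lying in a common component $C$ of $\Lambda_\epsilon(A)$. Pick a continuous path $\gamma: [0,1] \to C$ from $\mu_1$ to $\mu_2$, and for each $t$ select a minimum-norm rank-one perturbation $E(t)$ with $\|E(t)\| = \underline{\sigma}_A(\gamma(t)) \leq \epsilon$ and $\gamma(t)$ an eigenvalue of $A + E(t)$; in particular $E(0) = E(1) = 0$ because $\mu_1, \mu_2$ are exact eigenvalues of $A$. Along the family of matrices $A + E(t)$, the eigenvalue branch traced by $\gamma$ goes from $\mu_1$ to $\mu_2$ even though the matrix returns to $A$ at the endpoints: this nontrivial monodromy of the eigenvalue selections around the loop must be generated by a branch point of the characteristic polynomial, i.e., by some intermediate $t^* \in (0,1)$ at which $A + E(t^*)$ has a repeated eigenvalue. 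Since $\|E(t^*)\| \leq \epsilon$, we get $d_W \leq \epsilon$, and letting $\epsilon \searrow \bar\epsilon$ yields $d_W \leq \bar\epsilon$. The main obstacle in this step is legitimizing the argument: the minimum-norm perturbation $E(t)$ is determined only up to the phase of a singular-vector pair and need not be globally continuous, and global continuation of all $n$ eigenvalue branches across possible coalescences requires a careful Puiseux-expansion analysis. The nontriviality of the resulting monodromy permutation is precisely what witnesses a nearby matrix with a repeated eigenvalue.
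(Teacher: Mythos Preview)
The paper does not supply its own proof of this theorem: it is quoted from Alam and Bora \cite{AB05a} and used as a black box. So there is nothing to compare against, and your proposal has to be judged on its own merits.

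Your argument for $\bar\epsilon = v^*$ is fine, and the direction $\bar\epsilon \leq d_W$ via the homotopy $A+tE$ is standard and correct.

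The monodromy argument for $d_W \leq \bar\epsilon$, however, has a genuine flaw beyond the continuity issues you already flag. Even if $t\mapsto E(t)$ were continuous (so that $t\mapsto A+E(t)$ is an honest loop based at $A$), a nontrivial permutation of the eigenvalue branches around the loop does \emph{not} force a repeated eigenvalue anywhere along the loop. A concrete $2\times 2$ counterexample is
\[
A(t)=\begin{pmatrix}0 & e^{2\pi i t}\\ 1 & 0\end{pmatrix},\qquad t\in[0,1].
\]
Here $A(0)=A(1)$, the eigenvalues are $\pm e^{\pi i t}$, the discriminant $4e^{2\pi i t}$ never vanishes, and yet the two eigenvalue branches interchange as $t$ runs from $0$ to $1$. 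So monodromy alone cannot produce the desired nearby matrix with a multiple eigenvalue, and the argument as written does not close.

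The Alam--Bora proof proceeds quite differently and is constructive. At the value $\bar\epsilon$ two pseudospectral components first touch at some point $z^*$, which is a (nonsmooth or smooth) saddle point of $\underline{\sigma}_A$. The first-order criticality condition at $z^*$ relates the left and right singular vectors $u,v$ of $A-z^*I$ associated with the smallest singular value, and from that relation one builds an explicit rank-one perturbation $E$ with $\|E\|=\underline{\sigma}_A(z^*)=\bar\epsilon$ for which $z^*$ is a repeated (in fact defective) eigenvalue of $A+E$. That construction is what yields $d_W\leq\bar\epsilon$. If you want to repair your approach, you should replace the monodromy step by this local analysis at the coalescence point.
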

Two components of $\Lambda_{\epsilon}(A)$ would coalesce when $\epsilon\uparrow\bar{\epsilon}$,
and the point at which two components coalesce can be used to construct
the matrix closest to $A$ with repeated eigenvalues. Equivalently,
the point of coalescence of the two components is also the highest
point on an optimal mountain pass for the function $\underline{\sigma}_{A}$
between the corresponding eigenvalues. We use Algorithm \ref{alg:(Mountain-pass-1)}
to find such points of coalescence, which are resolvent critical points.

We should remark that solving for $v(z_{1},z_{2})$ is equivalent
to solving a global mountain pass problem, which is difficult. Also,
the problem of finding the eigenvalue pair $\{z_{1},z_{2}\}$ that
minimizes \eqref{eq:Wilk-min} is potentially difficult. In Section
\ref{sec:Wilkinson-2}, we focus only on finding a critical point
of mountain pass type between two chosen eigenvalues $z_{1}$ and
$z_{2}$. Fortunately, this strategy often succeeds in obtaining the
Wilkinson distance in our experiments in Section \ref{sec:Wilkinson-2}.

We should note that other approaches for the Wilkinson problem include
\cite{ABBO09}, which uses a Newton type method for the same local
problem, and \cite{Mengi09}.

\section{\label{sec:Wilkinson-2}Wilkinson's problem: Implementation and numerical
results}

We first use a convenient fast heuristic to estimate which pseudospectral
components first coalesce as $\epsilon$ increases from zero, as follows.
We construct the Voronoi diagram corresponding to the spectrum, and
then minimize the function $\underline{\sigma}_{A}:\mathbb{C}\rightarrow\mathbb{R}$
over all the line segments in the diagram (a fast computation, as
discussed in the comments on Step 1(b) below). We then concentrate
on the pair of eigenvalues separated by the line segment containing
the minimizer. This is illustrated in Example \ref{exa:matrix} below.

We describe implementation issues of Algorithm \ref{alg:(Mountain-pass-1)}. 

\textbf{Step 1(a):} Approximately minimizing the distance between
a pair of points in distinct components seem challenging in practice,
as we discussed briefly in Section \ref{sec:locally-superlinearly-convergent}.
In the case of pseudospectral components, we have the advantage that
computing the intersection between any circle and the pseudospectral
boundary is an easy eigenvalue computation \cite{MO05}. This observation
can be used to to check optimality conditions or algorithm design
for step 1(a).  We note that in our numerical implementation, step
1(a) is never actually performed.

\textbf{Step 1(b):} Finding the global minimizer in step 1(b) of Algorithm
\ref{alg:(Mountain-pass-1)} is easy in this case. Byers \cite{Bye88}
proved that $\epsilon$ is a singular value of $A-(x+iy)I$ if and
only if $iy$ is an eigenvalue of \[
\left(\begin{array}{cc}
x-A^{*} & -\epsilon I\\
\epsilon I & A-x\end{array}\right).\]
Using Byer's observation, Boyd and Balakrishnan \cite{BB90} devised
a globally convergent and locally quadratic convergent method for
the minimization problem over $\mathbb{R}$ of $y\mapsto\underline{\sigma}_{A}(x+iy)$.
We can easily amend these observations to calculate the minimum of
$\underline{\sigma}_{A}(x+iy)$ over a line segment efficiently by
noticing that if $\left|z\right|=1$, then \[
\underline{\sigma}_{A}(x+iy)=\underline{\sigma}(A-(x+iy)I)=\underline{\sigma}(z(A-(x+iy)I)).\]

\begin{example}
\label{exa:matrix}We apply our mountain pass algorithm on the matrix
\[
A=\left(\begin{array}{ccccc}
.461+.650i & .006+.625i\\
 & .457+.983i & .297+.733i\\
 &  & .451+.553i & .049+.376i\\
 &  &  & .412+.400i & .693+.010i\\
 &  &  &  & .902+.199i\end{array}\right)\]

The results of the numerical algorithm are presented in Table \ref{tab:Convergence-data},
and plots using EigTooL \cite{key-14} are presented in Figure \ref{fig:4-pics}.
We tried many random examples of bidiagonal matrices taking entries
in the square $\{x+iy\mid0\leq x,y\leq1\}$ of the same form as $A$.
The convergence to a critical point in this example is representative
of the typical behavior we encountered.

In Figure \ref{fig:4-pics}, the top left picture shows that the first
step in the Voronoi diagram method identifies the pseudospectral components
corresponding to the eigenvalues $0.461+0.650i$ and $0.451+0.553i$
as the ones that possibly coalesce first. We zoom into these eigenvalues
in the top right picture. In the bottom left diagram, successive steps
in the bisection method gives better approximation of the saddle point.
Finally in the bottom right picture, we see that the saddle point
was calculated at an accuracy at which the level sets of $\underline{\sigma}_{A}$
are hard to compute.
\end{example}
There are other cases where the heuristic method fails to find the
correct pair of eigenvalues whose components first coalesce.
\begin{example}
\label{exa:treat-bad-cases}Consider the matrix $A$ generated by
the following Matlab code:

\begin{verbatim}
A=zeros(10); 

A(1:9,2:10)= diag([0.5330 + 0.5330i, 0.9370 + 0.1190i,...
 0.7410 + 0.8340i, 0.7480 + 0.8870i, 0.6880 + 0.6700i,...
 0.2510 + 0.7430i, 0.9540 + 0.6590i, 0.2680 + 0.6610i,...
 0.2670 + 0.4340i]);

A=       A+diag([0.9850 + 0.7550i,0.8030 + 0.7810i,...
0.2590 + 0.5110i,0.3840 + 0.5310i,0.0080 + 0.5360i,...
0.9780 + 0.2720i,0.7190 + 0.3100i,0.5560 + 0.8370i,...
0.6350 + 0.7630i,0.5110 + 0.8870i]);

\end{verbatim}A sample run for this matrix is shown in Figure \ref{fig:bad-case}.
The heuristic on minimal values of $\underline{\sigma}_{A}$ on the
edges of the Voronoi diagram identifies the top left and central eigenvalues
as a pair for which the pseudospectral components first coalesce.
However, the correct pair should be the central and bottom right eigenvalues.
\end{example}
Here are a few more observations. In our trials, we attempt to find
the Wilkinson distance for bidiagonal matrices of size $10\times10$
similar to the matrices in Examples \ref{exa:matrix} and \ref{exa:treat-bad-cases}.
In all the examples we have tried, there was no need to perform step
1(a) of Algorithm \ref{alg:(Mountain-pass-1)} to achieve convergence
to a critical point. The convergence for the matrix in Example \ref{exa:matrix}
reflects the general performance of the (local) algorithm. As we
have seen in Example \ref{exa:treat-bad-cases}, the heuristic for
choosing a pair of eigenvalues may fail to choose the correct pseudospectral
components which first coalesce as $\epsilon$ increases. In a sample
of 225 runs, we need to check other pairs of eigenvalues 7 times.
In such cases, a different choice of a pair of eigenvalues still gave
convergence to the Wilkinson distance, though whether this must always
be the case is uncertain. The upper bounds for the critical value
are also better approximates of the critical values than the lower
bounds.\setcounter{table}{0}

\begin{table}
\begin{tabular}{|c|l|l|l|l|}
\hline 
$i$ & $f(x_{i})$ & $M_{i}$ & $\frac{M_{i}-f(x_{i})}{f(x_{i})}$ & $\left|x_{i}-y_{i}\right|$\tabularnewline
\hline
\hline 
1 & \textbf{6.1}325135002707E-4 & \textbf{6.151109286}4335E-4 & 3.03E-03 & 5.23E-03\tabularnewline
\hline 
2 & \textbf{6.151109}1521293E-4 & \textbf{6.151109286142}6E-4 & 2.18E-08 & 1.40E-05\tabularnewline
\hline 
3 & \textbf{6.151109286142}2E-4 & \textbf{6.151109286142}3E-4 & 3.35E-15 & 9.97E-10\tabularnewline
\hline
\end{tabular}\caption{\label{tab:Convergence-data}Convergence data for Example \ref{exa:matrix}.
Significant digits are in bold.}

\end{table}

\begin{figure}

\includegraphics[scale=0.5]{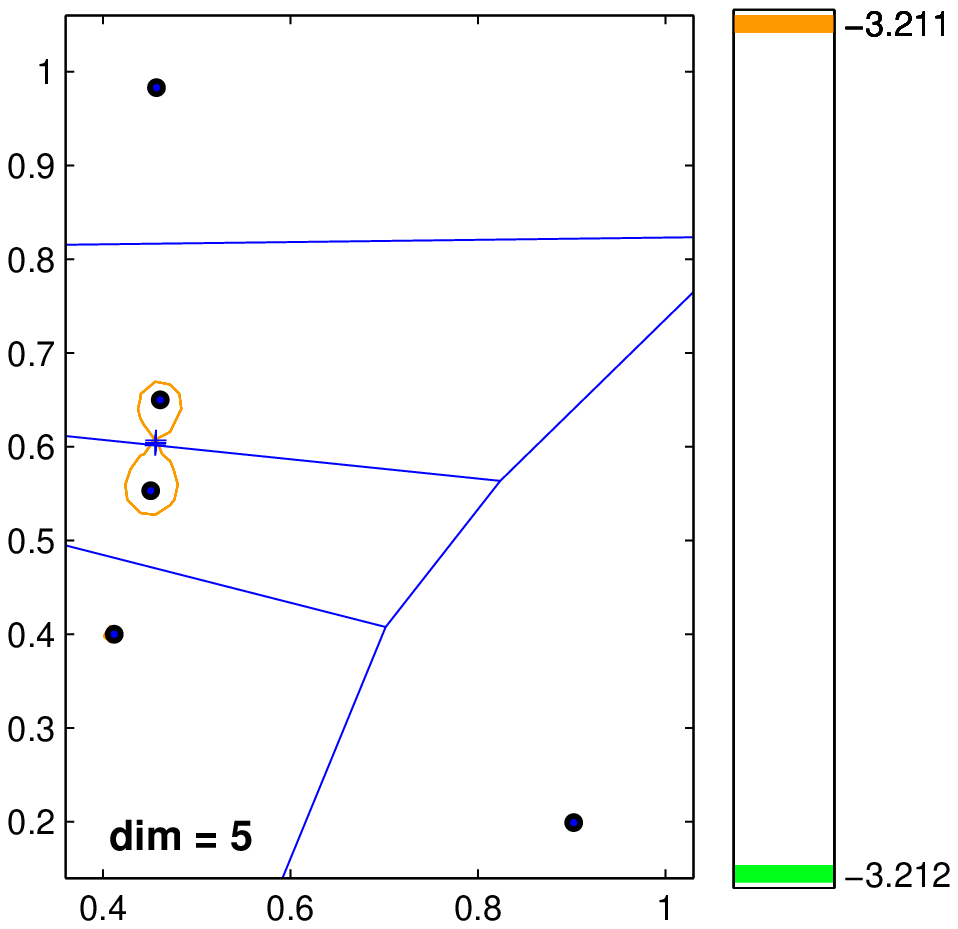}\includegraphics[scale=0.5]{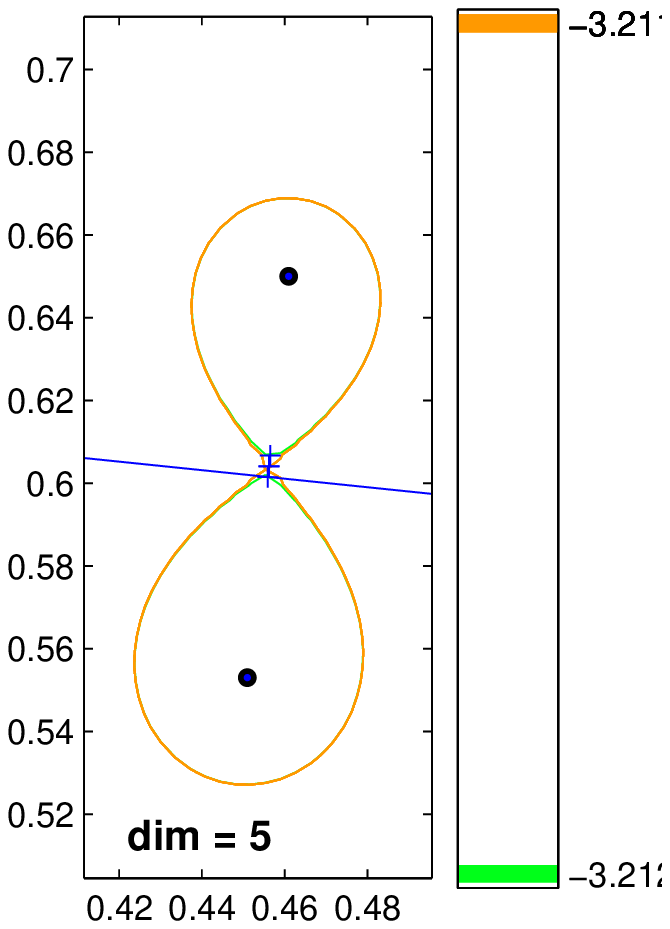}

\includegraphics[scale=0.5]{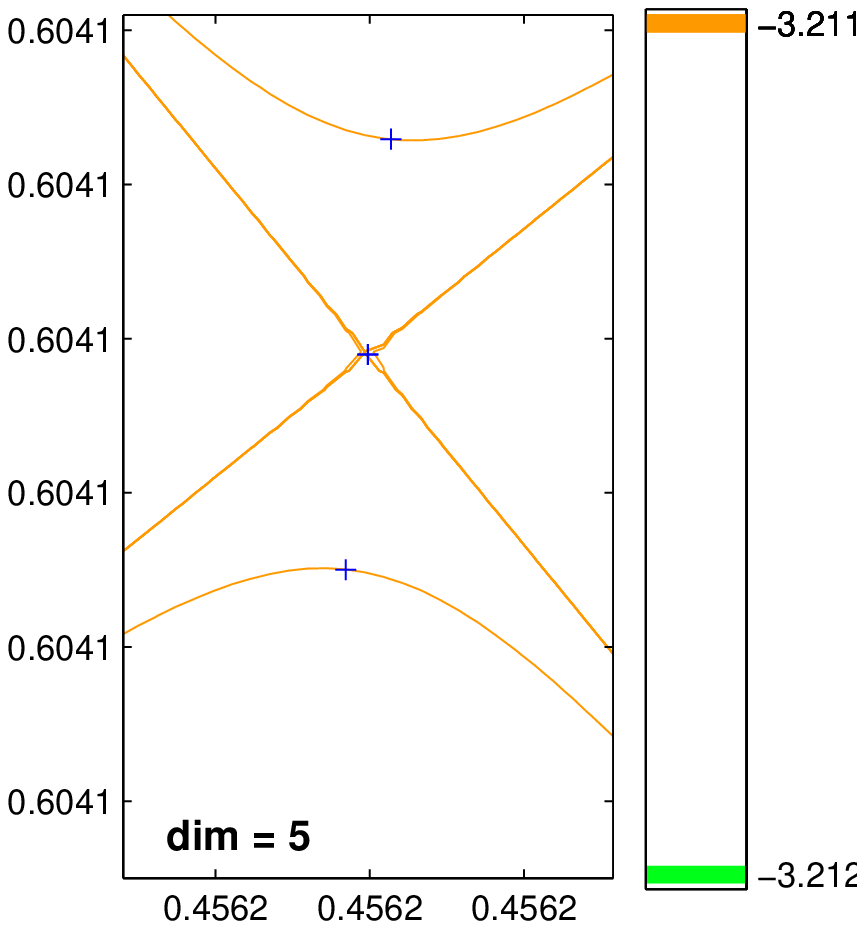}\includegraphics[scale=0.5]{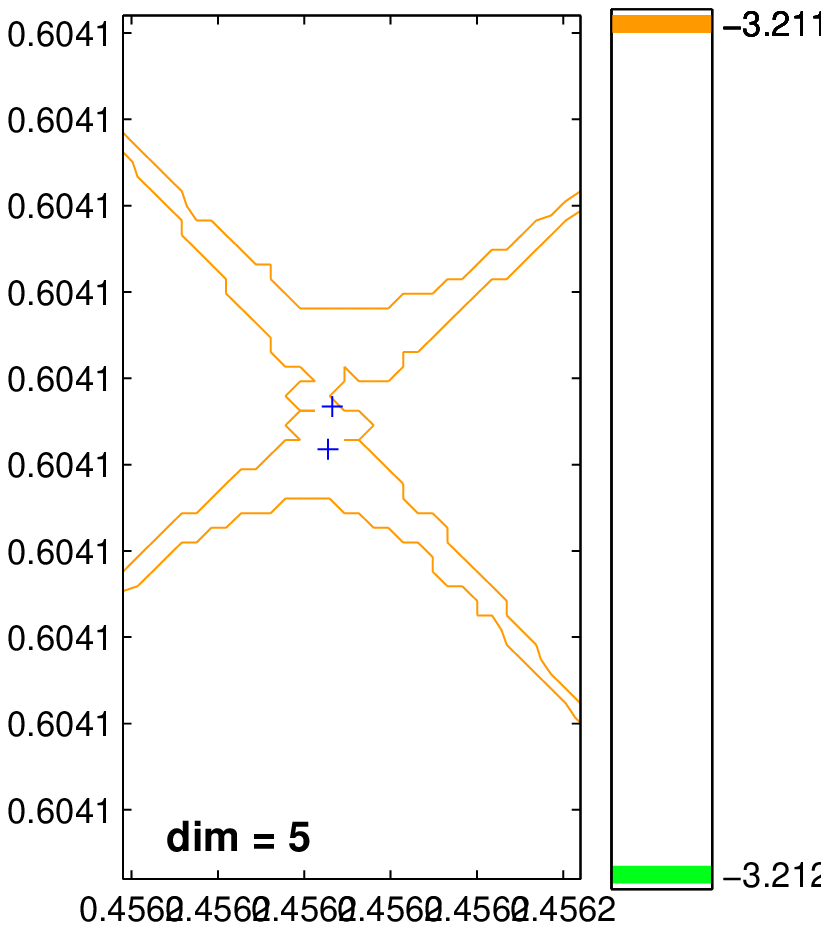}\caption{\label{fig:4-pics}A sample run of Algorithm \ref{alg:(Mountain-pass-1)}.}

\end{figure}

\begin{figure}

\includegraphics[scale=0.5]{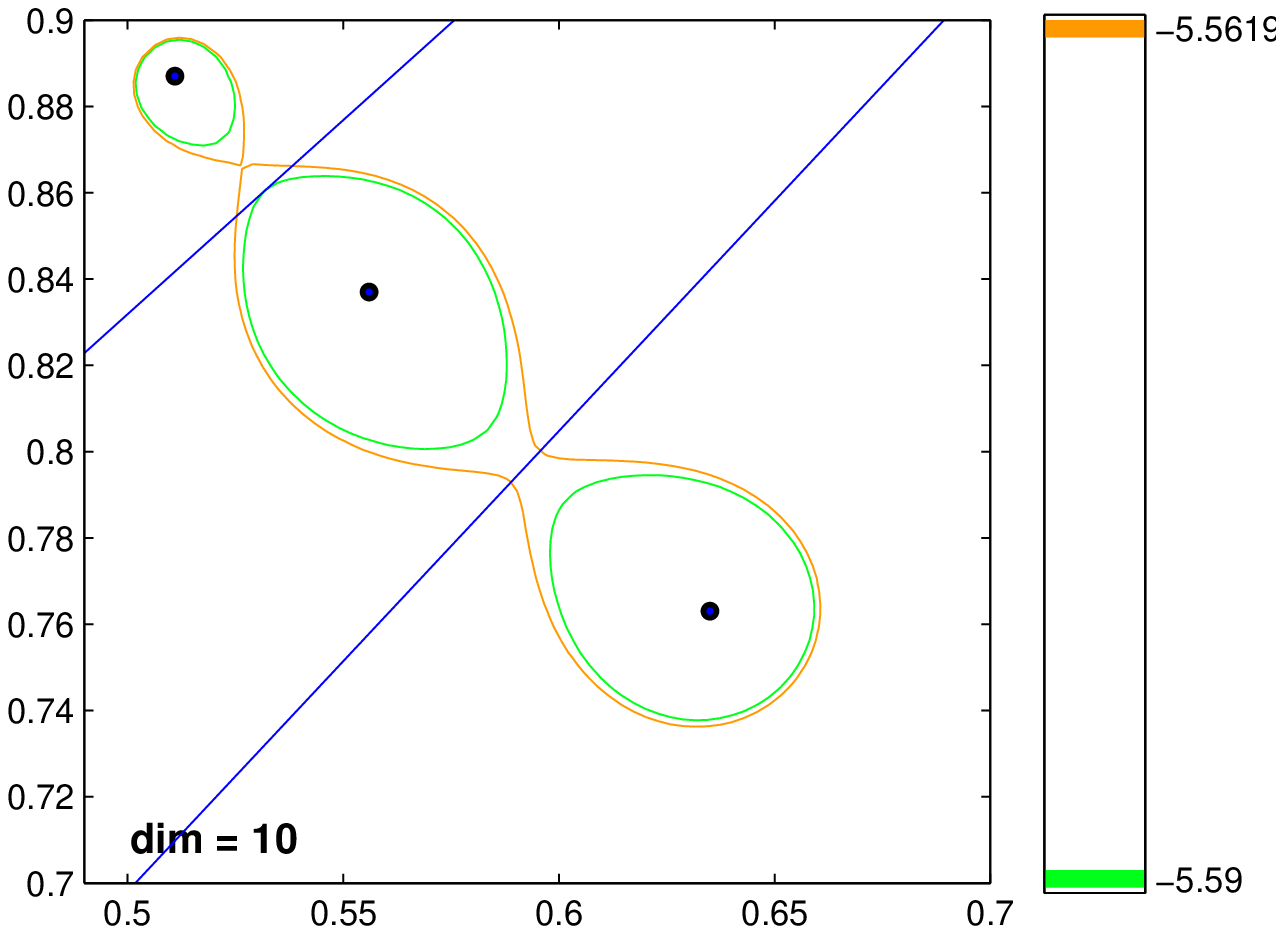}\includegraphics[scale=0.5]{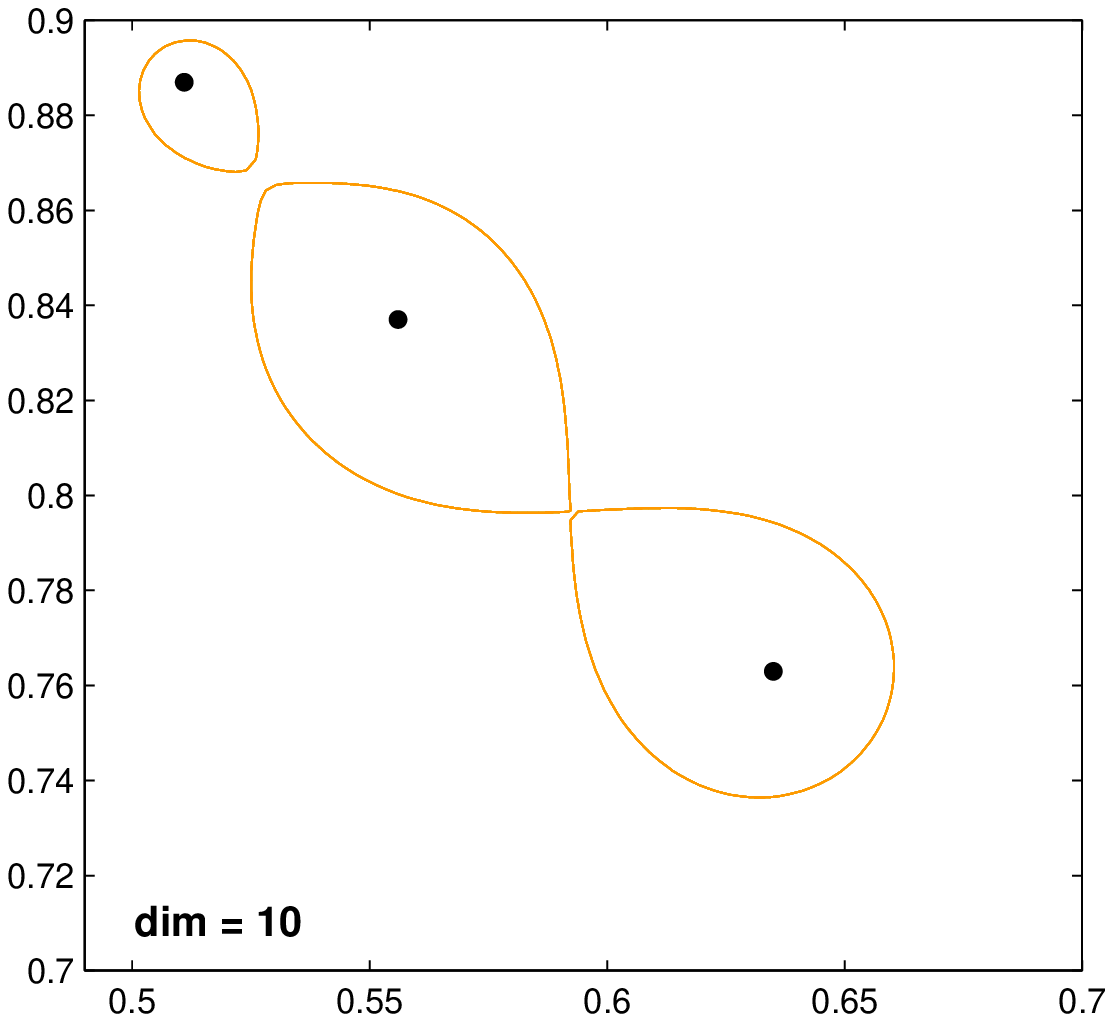}

\caption{\label{fig:bad-case}An example where the Voronoi diagram heuristic
fails.}

\end{figure}

\section{\label{sec:optim-conditions}Non-Lipschitz convergence and optimality
conditions}

In this section, we discuss the convergence of Algorithm \ref{alg:globally-convergent-MPT}
in the non-Lipschitz case and give an optimality condition in step
2 of Algorithm \ref{alg:globally-convergent-MPT}. As one might expect
in the smooth case in a Hilbert space, if $x_{i}$ and $y_{i}$ are
closest points in the different components, $\nabla f(x_{i})\neq\mathbf{0}$
and $\nabla f(y_{i})\neq\mathbf{0}$, then we have \[
\begin{array}{r}
x_{i}-y_{i}=\lambda_{1}\nabla f(y_{i}),\\
y_{i}-x_{i}=\lambda_{2}\nabla f(x_{i}).\end{array}\]
for $\lambda_{1},\lambda_{2}>0$. The rest of this section extends
this result to the nonsmooth case, making use of the language of variational
analysis in the style of \cite{RW98,BZ05,Cla83,Mor06} to describe
the relation between subdifferentials of $f$ and the normal cones
of the level sets of $f$. 

We now recall the definition of the Fr\'{e}chet subdifferential,
which is a generalization of the derivative to nonsmooth cases, and
the Fr\'{e}chet normal cone. A function $f:X\rightarrow\mathbb{R}$
is \emph{lsc} (lower semicontinuous) if $\liminf_{x\rightarrow\bar{x}}f(x)\geq f(\bar{x})$
for all $\bar{x}\in X$.
\begin{defn}
Let $f:X\rightarrow\mathbb{R}\cup\left\{ +\infty\right\} $ be a
proper lsc function. We say that $f$ is \emph{Fr\'{e}chet subdifferentiable}
and $x^{*}$ is a \emph{Fr\'{e}chet-subderivative} of $f$ at $x$
if $x\in\dom f$ and \[
\liminf_{\left|h\right|\rightarrow0}\frac{f(x+h)-f(x)-\left\langle x^{*},h\right\rangle }{\left|h\right|}\geq0.\]
 We denote the set of all Fr\'{e}chet-subderivatives of $f$ at $x$
by $\partial_{F}f(x)$ and call this object the \emph{Fr\'{e}chet
subdifferential} of $f$ at $x$.
\begin{defn}
Let $S$ be a closed subset of $X$. We define the \emph{Fr\'{e}chet
normal cone} of $S$ at $x$ to be $N_{F}(S;x):=\partial_{F}\iota_{S}(x)$.
Here, $\iota_{S}:X\rightarrow\mathbb{R}\cup\left\{ \infty\right\} $
is the indicator function defined by $\iota_{S}(x)=0$ if $x\in S$,
and $\infty$ otherwise.
\end{defn}
\end{defn}
Closely related to the Fr\'{e}chet normal cone is the proximal normal
cone.
\begin{defn}
Let $X$ be a Hilbert space and let $S\subset X$ be a closed set.
If $x\notin S$ and $s\in S$ are such that $s$ is a closest point
to $x$ in $S$, then any nonnegative multiple of $x-s$ is a \emph{proximal
normal vector} to $S$ at $s$. The set of all proximal normal vectors
is denoted $N_{P}(S;s)$.
\end{defn}
The proximal normal cone and the Fr\'{e}chet normal cone satisfy
the following relation. See for example \cite[Exercise 5.3.5]{BZ05}.
\begin{thm}
$N_{P}(S;\bar{x})\subset N_{F}(S;\bar{x})$.
\end{thm}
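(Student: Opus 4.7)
My plan is to unpack both definitions into analytic inequalities and then show the former implies the latter by a direct limit argument. The whole proof should be short, since the result is a standard fact from variational analysis; the only subtle point is to translate the geometric definition of $N_P$ (via closest points) into the so-called proximal normal inequality.

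First I would take an arbitrary $v \in N_P(S;\bar{x})$, so by definition $v = t(y-\bar{x})$ for some $t \geq 0$ and some $y \in X$ for which $\bar{x}$ is a closest point in $S$. I would then derive the proximal normal inequality: for every $s \in S$,
\[
\langle v, s - \bar{x}\rangle \;\leq\; \tfrac{t}{2}\,|s - \bar{x}|^{2}.
\]
This follows by expanding $|y-s|^{2} \geq |y-\bar{x}|^{2}$ (the closest-point inequality), simplifying to $\langle y - \bar{x}, s - \bar{x}\rangle \leq \tfrac{1}{2}|s-\bar{x}|^{2}$, and multiplying through by $t$. The case $t = 0$ is trivial since then $v = 0 \in N_F(S;\bar{x})$.

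Next I would translate the definition of $N_F(S;\bar{x}) = \partial_F \iota_S(\bar{x})$: since $\bar{x} \in S$ makes $\iota_S(\bar{x}) = 0$, and since the liminf over $h \to 0$ only receives finite contributions from $h$ with $\bar{x} + h \in S$, membership $v \in N_F(S;\bar{x})$ is equivalent to
\[
\limsup_{s \to \bar{x},\, s \in S,\, s \neq \bar{x}} \frac{\langle v, s - \bar{x}\rangle}{|s - \bar{x}|} \;\leq\; 0.
\]
Combining this with the proximal normal inequality above gives, for $s \in S \setminus \{\bar{x}\}$,
\[
\frac{\langle v, s - \bar{x}\rangle}{|s - \bar{x}|} \;\leq\; \tfrac{t}{2}\,|s - \bar{x}|,
\]
and the right-hand side tends to $0$ as $s \to \bar{x}$. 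This yields the required inequality, so $v \in N_F(S;\bar{x})$, completing the proof.

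The main obstacle, such as it is, is purely notational: making sure the definition of $N_F(S;\bar{x})$ via the indicator function is correctly reduced to the inequality displayed above (one must verify that $h$ with $\bar{x}+h \notin S$ trivially satisfy the defining inequality since then the numerator is $+\infty$). No deeper tools — no separation theorem, no calculus rules — are needed; the proximal normal inequality does all the work.
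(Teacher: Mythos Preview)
Your proof is correct and follows the standard route via the proximal normal inequality. The paper itself does not supply a proof of this statement at all; it merely cites \cite[Exercise 5.3.5]{BZ05} and moves on, so there is nothing to compare against beyond noting that your argument is precisely the one that exercise intends.
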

Here is an easy consequence of the definitions.
\begin{prop}
\label{pro:opp-normals} Let $S_{1}$ be the component of $\lev_{\leq l_{i}}f$
containing $x_{0}$ and $S_{2}$ be the component of $\lev_{\leq l_{i}}f$
containing $y_{0}$. Suppose that $x_{i}$ is a point in $S_{1}$
closest to $S_{2}$ and $y_{i}$ is a point in $S_{2}$ closest to
$x_{i}$. Then we have \[
(y_{i}-x_{i})\in N_{P}(\lev_{\leq l_{i}}f;x_{i})\subset N_{F}(\lev_{\leq l_{i}}f;x_{i}).\]
Similarly, $(x_{i}-y_{i})\in N_{F}(\lev_{\leq l_{i}}f;y_{i})$. These
are two normals of $\lev_{\leq l_{i}}f$ pointing in opposite directions. 
\end{prop}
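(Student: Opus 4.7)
The plan is to upgrade each conclusion to a proximal statement: I will show $(y_i - x_i) \in N_P(\lev_{\leq l_i} f; x_i)$ (and symmetrically $(x_i - y_i) \in N_P(\lev_{\leq l_i} f; y_i)$), whence the cited inclusion $N_P \subset N_F$ delivers the Fr\'echet assertions. I first observe that the hypotheses pin down the distances: $|x_i - y_i| = d(x_i, S_2) = d(S_1, S_2)$, and consequently for any $s \in S_1$, $|s - y_i| \geq d(s, S_2) \geq d(S_1, S_2) = |x_i - y_i|$, so $x_i$ is itself a closest point in $S_1$ to $y_i$. This puts the pair $(x_i, y_i)$ into a symmetric configuration, so it suffices to treat the first inclusion.

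The natural witness for proximal normality is the point $q_s := (1-s)\, x_i + s\, y_i$ for small $s > 0$: it satisfies $q_s - x_i = s(y_i - x_i)$, so it remains to show that for some such $s$, $q_s$ lies outside $\lev_{\leq l_i} f$ and $x_i$ is a closest point of $\lev_{\leq l_i} f$ to $q_s$. For the first requirement, consider $T := \sup\{s \in [0,1] : [x_i, q_s] \subset \lev_{\leq l_i} f\}$. Closedness of the sublevel set forces $[x_i, q_T] \subset \lev_{\leq l_i} f$, and since this is a connected path in the level set through $x_i$, it lies inside the path-component $S_1$; in particular $q_T \in S_1$. But then $|q_T - y_i| = (1-T)|x_i - y_i|$ is strictly less than $|x_i - y_i|$ whenever $T > 0$, contradicting that $x_i$ is a closest point in $S_1$ to $y_i$. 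Hence $T = 0$, which produces arbitrarily small $s > 0$ with $q_s \notin \lev_{\leq l_i} f$.

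For the second requirement, fix such an $s$ with $s \leq \tfrac12$, and let $w \in \lev_{\leq l_i} f$. If $w \in S_1$, then the reverse triangle inequality and the bound $|w - y_i| \geq |x_i - y_i|$ give
\[
|q_s - w| \;\geq\; |w - y_i| - |y_i - q_s| \;\geq\; |x_i - y_i| - (1-s)|x_i - y_i| \;=\; s|x_i - y_i| \;=\; |q_s - x_i|;
\]
if $w \in S_2$, then $|w - x_i| \geq |x_i - y_i|$ analogously yields $|q_s - w| \geq (1-s)|x_i - y_i| \geq |q_s - x_i|$. For $w$ in any other path-component, the distance from $x_i$ is bounded below by a positive constant, so shrinking $s$ further ensures $|q_s - w| \geq |q_s - x_i|$ there as well. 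Thus $x_i$ is a closest point of $\lev_{\leq l_i} f$ to $q_s$, establishing $(y_i - x_i) \in N_P(\lev_{\leq l_i} f; x_i)$; the companion inclusion and both Fr\'echet statements follow. The main delicacy — and the step I would expect to need most care — is controlling points in the remaining path-components: in $\mathbb{R}^n$ or any locally path-connected setting this is automatic, since distinct path-components of a closed set cannot accumulate at $x_i$, but in an abstract metric space it may require an extra hypothesis on $f$ or on $U$.
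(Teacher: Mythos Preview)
The paper gives no proof beyond the sentence ``Here is an easy consequence of the definitions,'' so your write-up is already far more careful than the source. Your symmetrization (showing first that $x_i$ is a closest point of $S_1$ to $y_i$), the path argument forcing $T=0$, and the triangle-inequality estimates for $w\in S_1$ and $w\in S_2$ are all correct and cleanly done; this is exactly the content the paper is tacitly invoking.

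The genuine gap is in your treatment of the remaining path-components, and specifically in the remedy you propose. You assert that ``in $\mathbb{R}^n$ or any locally path-connected setting this is automatic, since distinct path-components of a closed set cannot accumulate at $x_i$.'' That is false: local path-connectedness of the ambient space controls path-components of \emph{open} subsets, not closed ones. Concretely, in $\mathbb{R}^2$ take
\[
S \;=\; \{(a,b): b\le 0\}\;\cup\;\{(0,1)\}\;\cup\;\{(1/n,1/n): n\ge 3\},
\]
which is closed and is $\lev_{\le 0}$ of the Lipschitz function $d(\cdot,S)$. With $S_1$ the half-plane and $S_2=\{(0,1)\}$ one has $x_i=(0,0)$, $y_i=(0,1)$, and the singleton components $(1/n,1/n)$ accumulate at $x_i$. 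Along that sequence the Fr\'echet quotient for the vector $(0,1)$ equals $1/\sqrt{2}$, so $(0,1)\notin N_F(S;(0,0))$: not only does your argument stall here, the \emph{conclusion of the proposition itself fails} for this $f$.

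So you have correctly isolated the delicate step, but the fix you suggest does not work. What is really needed is an extra hypothesis ensuring that near $x_i$ the level set coincides with $S_1$ --- for instance restricting to a neighbourhood $U$ in which only the two components are present (as in Algorithm~\ref{alg:globally-convergent-MPT}), or assuming the $\mathcal{C}^2$ nondegenerate setting treated earlier in the paper. The paper simply glosses over this point.
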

The above result gives a necessary condition for the optimality of
step 2 in Algorithm \ref{alg:globally-convergent-MPT}. We now see
how the Fr\'{e}chet normals relate to the subdifferential of $f$
at $x_{i}$, $y_{i}$ at $\bar{z}$. Here is the definition of the
Clarke subdifferential for non-Lipschitz functions.
\begin{defn}
 Let $X$ be a Hilbert space and let $f:X\rightarrow\mathbb{R}$
be a lsc function. Then the \emph{Clarke subdifferential} of $f$
at $\bar{x}$ is \[
\partial_{C}f(\bar{x}):=\cl\,\conv\{\wlim_{i\rightarrow\infty}x_{i}^{*}\mid x_{i}^{*}\in\partial_{F}f(x_{i}),(x_{i},f(x_{i}))\rightarrow(\bar{x},f(\bar{x}))\}+\partial_{C}^{\infty}f(\bar{x}),\]
where the \emph{singular subdifferential} of $f$ at $\bar{x}$ is
a cone defined by \[
\partial_{C}^{\infty}f(\bar{x}):=\cl\,\conv\{\wlim_{i\rightarrow\infty}\lambda_{i}x_{i}^{*}\mid x_{i}^{*}\in\partial_{F}f(x_{i}),(x_{i},f(x_{i}))\rightarrow(\bar{x},f(\bar{x})),\lambda_{i}\rightarrow0_{+}\}.\]

\end{defn}
For finite dimensional spaces, the weak topology is equivalent to
the norm topology, so we may replace $\wlim$ by $\lim$ in that setting.
We will use the limiting subdifferential and the limiting normal cone,
whose definitions we recall below, in the proof of the finite dimensional
case of Theorem \ref{thm:second-critical-pt-thm}.
\begin{defn}
Let $X$ be a Hilbert space and let $f:X\rightarrow\mathbb{R}$ be
a lsc function. Define the \emph{limiting subdifferential} of $f$
at $\bar{x}$ by \[
\partial_{L}f(\bar{x}):=\{\wlim_{i\rightarrow\infty}x_{i}^{*}\mid x_{i}^{*}\in\partial_{F}f(x_{i}),\left(x_{i},f(x_{i})\right)\rightarrow\left(\bar{x},f(\bar{x})\right)\},\]
and the \emph{singular subdifferential} of $f$ at $\bar{x}$, which
is a cone, by \[
\partial^{\infty}f(\bar{x}):=\{\wlim_{i\rightarrow\infty}t_{i}x_{i}^{*}\mid x_{i}^{*}\in\partial_{F}f(x_{i}),(x_{i},f(x_{i}))\rightarrow(\bar{x},f(\bar{x})),t_{i}\rightarrow0_{+}\}.\]

\end{defn}
The limiting normal cone is defined in a similar manner.
\begin{defn}
Let $X$ be a Hilbert space and let $S$ be a closed subset of $X$.
Define the limiting normal cone of $S$ at $x$ by\[
N_{L}(S;x):=\{\wlim_{i\rightarrow\infty}x_{i}^{*}\mid x_{i}^{*}\in N_{F}(S;x_{i}),S\ni x_{i}\rightarrow x\}.\]

\end{defn}
It is clear from the definitions that the Fr\'{e}chet subdifferential
is contained in the limiting subdifferential, which is in turn contained
in the Clarke subdifferential. Similarly, the Fr\'{e}chet normal
cone is contained in the limiting normal cone. We first state a theorem
relating normal cones to subdifferentials in the finite dimensional
case.
\begin{thm}
\label{thm:[RW98,10.3]}\cite[Proposition 10.3]{RW98} For a lsc function
$f:\mathbb{R}^{n}\rightarrow\mathbb{R}\cup\left\{ \infty\right\} $,
let $\bar{x}$ be a point with $f(\bar{x})=\alpha$. Then\[
N_{F}(\lev_{\leq\alpha}f;\bar{x})\supset\mathbb{R}_{+}\partial_{F}f(\bar{x})\cup\left\{ \mathbf{0}\right\} .\]
If $\partial_{L}f(\bar{x})\not\ni\mathbf{0}$, then also\[
N_{L}(\lev_{\leq\alpha}f;\bar{x})\subset\mathbb{R}_{+}\partial_{L}f(\bar{x})\cup\partial^{\infty}f(\bar{x}).\]

\end{thm}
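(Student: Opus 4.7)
The plan is to treat the two inclusions separately, with the first being a short unravelling of definitions and the second requiring the nondegeneracy hypothesis and the standard machinery of limiting subdifferentials for sublevel sets.

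For the inclusion $\mathbb{R}_+\partial_F f(\bar{x})\cup\{\mathbf{0}\}\subset N_F(\lev_{\leq\alpha}f;\bar{x})$, I would argue directly. If $v\in\partial_F f(\bar{x})$, then for $x\in\lev_{\leq\alpha}f$ near $\bar{x}$ the Fr\'echet subgradient inequality gives
\[
0 \;\geq\; f(x)-f(\bar{x}) \;\geq\; \langle v,x-\bar{x}\rangle + o(|x-\bar{x}|),
\]
which is precisely the defining inequality for $v\in N_F(\lev_{\leq\alpha}f;\bar{x})$. Scaling by $\lambda\geq 0$ preserves this inequality, and $\mathbf{0}$ lies in every Fr\'echet normal cone trivially, so the whole ray plus the origin belongs.

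For the inclusion $N_L(\lev_{\leq\alpha}f;\bar{x})\subset\mathbb{R}_+\partial_L f(\bar{x})\cup\partial^\infty f(\bar{x})$, I would take $v\in N_L(\lev_{\leq\alpha}f;\bar{x})$ with sequences $x_i\to\bar{x}$ in the level set and Fr\'echet normals $v_i\to v$. Points $x_i$ in the interior $\{f<\alpha\}$ contribute only $v_i=\mathbf{0}$ and can be thrown away by passing to a subsequence, so I would assume $f(x_i)=\alpha$. The strategy is to produce representations $v_i=\lambda_i u_i + r_i$ with $\lambda_i\geq 0$, $u_i\in\partial_F f(x_i)$, and $r_i\to\mathbf{0}$, and then split into cases according to whether $\{\lambda_i\}$ is bounded. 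If bounded, a subsequential limit yields $v=\lambda u$ with $\lambda\geq 0$ and $u\in\partial_L f(\bar{x})$. If $\lambda_i\to\infty$, renormalizing so that $u_i=v_i/\lambda_i\to\mathbf{0}$ places $v$ in the singular subdifferential $\partial^\infty f(\bar{x})$.

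The main obstacle, and the step where the CQ $\mathbf{0}\notin\partial_L f(\bar{x})$ enters essentially, is producing the representation $v_i\in\mathbb{R}_+\partial_F f(x_i)$ up to vanishing error. My preferred route is an exact-penalty argument: under the CQ, the distance $\mathrm{dist}(\cdot,\lev_{\leq\alpha}f)$ is locally bounded above by a constant multiple of $(f-\alpha)_+$, so the indicator $\iota_{\lev_{\leq\alpha}f}$ is majorized near $\bar{x}$ by $\kappa(f-\alpha)_+$ for some $\kappa>0$. Subdifferentiating this majorization at Fr\'echet-normal points then forces each $v_i$ into $\mathbb{R}_+\partial_F f(x_i)$ modulo an $o(1)$ perturbation, and the two-case limiting argument above completes the proof. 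Verifying that the vanishing errors do not disrupt the limit, and checking that the CQ at $\bar{x}$ transfers to nearby $x_i$ via outer semicontinuity of $\partial_L f$, are the routine-but-delicate pieces I expect to have to handle carefully.
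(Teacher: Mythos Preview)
The paper does not supply its own proof of this statement: it is quoted verbatim as \cite[Proposition 10.3]{RW98} and used as a black box in the proof of Theorem~\ref{thm:second-critical-pt-thm}. There is therefore nothing in the paper to compare your argument against.

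That said, your sketch is broadly sound and follows one of the standard routes to this result. Two small points are worth flagging. First, your claim that points with $f(x_i)<\alpha$ contribute only $v_i=\mathbf{0}$ relies on $\{f<\alpha\}$ being open, which need not hold for a merely lsc $f$; however, since $x_i\to\bar{x}$ in $\lev_{\leq\alpha}f$ and lower semicontinuity forces $\liminf f(x_i)\geq f(\bar{x})=\alpha$, you in fact get $f(x_i)\to\alpha$ automatically, so the $f$-attentive convergence needed for $\partial_L f(\bar{x})$ and $\partial^\infty f(\bar{x})$ is available without discarding any points. Second, the exact-penalty step you outline is correct in spirit but is really the whole content of the theorem; the version in \cite{RW98} instead passes through the epigraphical characterization of subgradients, writing $\lev_{\leq\alpha}f=\{x:(x,\alpha)\in\epi f\}$ and invoking the calculus rule for normal cones to inverse images, with the constraint qualification $\mathbf{0}\notin\partial_L f(\bar{x})$ translating into the required normal-cone condition on $\epi f$. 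Either route works, but if you pursue yours you should make the calmness estimate $d(x,\lev_{\leq\alpha}f)\leq\kappa(f(x)-\alpha)_+$ precise rather than assert it.
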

The corresponding result for the infinite dimensional case is presented
below.
\begin{thm}
\label{thm:[BZ05,3.3.4]}\cite[Theorem 3.3.4]{BZ05} Let $X$ be a
Hilbert space and let $f:X\rightarrow\mathbb{R}\cup\left\{ +\infty\right\} $
be a lsc function. Suppose that $\liminf_{x\rightarrow\bar{x}}d(\partial_{F}f(x);\mathbf{0})>0$
and $\xi\in N_{F}(\lev_{\leq f(\bar{x})}f;\bar{x})$. Then, for any
$\epsilon>0$, there exist $\lambda>0$, $(x,f(x))\in\mathbb{B}_{\epsilon}((\bar{x},f(\bar{x})))$
and $x^{*}\in\partial_{F}f(x)$ such that \[
\left|\lambda x^{*}-\xi\right|\leq\epsilon.\]

\end{thm}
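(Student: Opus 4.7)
The plan is to combine the Fr\'echet support-function characterization of the normal cone, a smooth variational principle, and a fuzzy sum rule. Setting $c=f(\bar{x})$ and $S=\lev_{\leq c}f$, I would first translate the hypothesis $\xi\in N_F(S;\bar{x})$ into a directional estimate: for any prescribed $\delta>0$ there is some $r>0$ such that $\langle\xi,x-\bar{x}\rangle\leq\delta\,|x-\bar{x}|$ for all $x\in S\cap\mathbb{B}(\bar{x},r)$. Equivalently, $\bar{x}$ is an approximate Fr\'echet minimizer of $\iota_S(x)-\langle\xi,x-\bar{x}\rangle$.

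Next, I would convert the indicator $\iota_S$ into a quantitative penalty by using the qualification condition $\liminf_{x\to\bar{x}} d(\partial_F f(x);\mathbf{0})>0$. This nondegeneracy yields some $\rho>0$ such that $|x^*|\geq\rho$ for every $x^*\in\partial_F f(x)$ at points $x$ near $\bar{x}$. By a standard decrease principle proved via Ekeland's variational principle, this forces a local error bound of the form $d(x,S)\leq\rho^{-1}\max\{f(x)-c,0\}$ on a neighborhood of $\bar{x}$, which in turn dominates $\iota_S(x)$ by a suitably large multiple $\mu\max\{f(x)-c,0\}$ when restricted to that neighborhood. Substituting into the directional estimate, $\bar{x}$ becomes an $O(\delta)$-minimizer of the function $\Phi(x):=\mu\max\{f(x)-c,0\}-\langle\xi,x-\bar{x}\rangle$ on a small closed ball.

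Then I would apply the Borwein--Preiss smooth variational principle to $\Phi$ (with a smooth bump of size $O(\epsilon)$) to produce a point $x_0$ close to $\bar{x}$, with $|f(x_0)-c|$ small, at which $\Phi$ plus the bump attains its global minimum. Fermat's rule combined with the Fr\'echet fuzzy sum rule then delivers points $x_1,x_2$ near $x_0$ together with $u^*\in\partial_F\bigl(\max\{f(\cdot)-c,0\}\bigr)(x_1)$ and the linear functional $-\xi$ whose approximate sum is of order $\epsilon$. It remains to unwind $\partial_F\max\{f(\cdot)-c,0\}(x_1)$: on $\{f>c\}$ this equals $\partial_F f(x_1)$, on $\{f<c\}$ it is $\{\mathbf{0}\}$, and on $\{f=c\}$ it lies in $\mathbb{R}_+\partial_F f(x_1)\cup\{\mathbf{0}\}$. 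The qualification condition rules out $u^*=\mathbf{0}$ balancing $\xi$, so we obtain $u^*=\lambda x^*$ with $\lambda>0$ and $x^*\in\partial_F f(x_1)$ satisfying $|\lambda x^*-\xi|\leq\epsilon$, which is the conclusion with $x:=x_1$.

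The main obstacle is the exact-penalty/error-bound step: extracting the quantitative control $d(x,S)\leq\rho^{-1}\max\{f(x)-c,0\}$ from the pointwise lower bound on $|\partial_F f|$. This requires a delicate Ekeland argument (approximate minimization of the positive part $(f-c)^+$ under a displacement penalty, together with a Fr\'echet subgradient extraction at the perturbed minimizer), and it is here that the Hilbert (or, more generally, Asplund) structure of $X$ is indispensable, because we cannot extract convergent subsequences directly and must rely on the smooth variational principles available in that setting.
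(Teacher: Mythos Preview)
The paper does not prove this statement: it is quoted verbatim from \cite[Theorem~3.3.4]{BZ05} and used as a black box in the proof of Theorem~\ref{thm:second-critical-pt-thm}. So there is no ``paper's own proof'' to compare against; what follows is an assessment of your outline on its own merits.

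Your plan has the right architecture (normal-cone inequality $\Rightarrow$ approximate minimality $\Rightarrow$ smooth variational principle $\Rightarrow$ subdifferential calculus), but the penalization via $(f-c)^+$ creates a genuine circularity that you do not address. After the variational step you obtain an exact local minimizer $x_0$ of $\Psi:=\mu(f-c)^+-\langle\xi,\cdot\rangle+\text{(smooth bump)}$, and since the last two summands are smooth the sum rule is exact: you must analyse $\partial_F\bigl((f-c)^+\bigr)(x_0)$ itself, not at a nearby fuzzy point. Now observe what your own error bound forces. If $\mu>\lvert\xi\rvert/\rho$ (the exact-penalty threshold you invoke), then any minimizer of $\Psi$ must lie in $S=\{f\le c\}$: otherwise projecting to $S$ strictly decreases $\Psi$. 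Hence $f(x_0)\le c$, and you are in precisely the case your sketch glosses over. With $f(x_0)\le c$ the minimality of $x_0$ for $\Psi$ restricted to $S$ says only that $\xi-\nabla(\text{bump})(x_0)\in N_F(S;x_0)$, which is the hypothesis of the theorem transported to a nearby point---no progress has been made. Your asserted inclusion $\partial_F\bigl((f-c)^+\bigr)(x_0)\subset\mathbb R_+\partial_F f(x_0)\cup\{0\}$ when $f(x_0)=c$ is exactly the content of the theorem at $x_0$ and cannot be assumed; and when $f(x_0)<c$ (which lower semicontinuity does \emph{not} exclude in a neighbourhood), you additionally lose the required control $\lvert f(x_0)-c\rvert<\epsilon$.

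The standard repair, and essentially what Borwein--Zhu do, is to lift the problem to the epigraph: work in $X\times\mathbb R$ with the constraint set $\operatorname{epi} f\cap\bigl(X\times(-\infty,c]\bigr)$ and apply the variational principle there. Normals to $\operatorname{epi} f$ at boundary points are \emph{by definition} of the form $(\lambda x^*,-\lambda)$ with $x^*\in\partial_F f$ (plus singular directions, which the qualification kills), so the link to $\partial_F f$ is built in rather than something to be proved, and the auxiliary scalar coordinate automatically pins $f(x)$ near $c$. Alternatively one can run a sequence $\mu_n\uparrow\infty$ below the exact-penalty threshold so that $f(x_n)>c$ strictly at each stage and pass to the limit, but this also requires care. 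Either way, the single-shot $(f-c)^+$ penalty with $\mu$ chosen via the error bound, as you propose, does not close.
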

With these preliminaries, we now prove our theorem for the convergence
of Algorithm \ref{alg:globally-convergent-MPT} to a Clarke critical
point.
\begin{thm}
\label{thm:second-critical-pt-thm}Suppose that $f:X\rightarrow\mathbb{R}$,
where $X$ is a Hilbert space and $f$ is lsc. If $\bar{z}$ is such
that
\begin{enumerate}
\item $\left(\bar{z},\bar{z}\right)$ is a limit point of $\left\{ (x_{i},y_{i})\right\} _{i=1}^{\infty}$
in Algorithm \ref{alg:globally-convergent-MPT}, and 
\item $f$ is continuous at $\bar{z}$.
\end{enumerate}
Then one of these must hold:
\begin{enumerate}
\item [(a)] $\bar{z}$ is a Clarke critical point, 
\item [(b)] $\partial_{C}^{\infty}f(\bar{z})$ contains a line through
the origin, or
\item [(c)] $\left\{ \frac{y_{i}-x_{i}}{\left|y_{i}-x_{i}\right|}\right\} _{i}$
converges weakly to zero.
\end{enumerate}
\end{thm}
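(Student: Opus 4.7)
My plan is to prove the contrapositive: assume neither (a) nor (b) holds, and deduce (c) along the subsequence realizing the limit point. Pass to a subsequence (reindexed by $k$) with $(x_{i_{k}},y_{i_{k}})\to(\bar z,\bar z)$. A closest point cannot lie in the interior of a continuous function's level set (one could push slightly toward the opposite component), so by continuity of $f$ at $\bar z$ one has $f(x_{i_{k}})=f(y_{i_{k}})=l_{i_{k}}\to f(\bar z)$. Proposition \ref{pro:opp-normals} then supplies
\[
v_{k}:=y_{i_{k}}-x_{i_{k}}\in N_{F}(\lev_{\leq l_{i_{k}}}f;x_{i_{k}}),\quad -v_{k}\in N_{F}(\lev_{\leq l_{i_{k}}}f;y_{i_{k}}).
\]

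Next I verify the standing hypothesis of Theorem \ref{thm:[BZ05,3.3.4]}. If $\liminf_{x\to\bar z}d(\partial_{F}f(x),\mathbf{0})=0$, then picking $z_{n}\to\bar z$ with $z_{n}^{*}\in\partial_{F}f(z_{n})$ and $\|z_{n}^{*}\|\to 0$ would force $\mathbf{0}\in\partial_{L}f(\bar z)\subset\partial_{C}f(\bar z)$ (via continuity of $f$ at $\bar z$), contradicting the assumption that (a) fails. Hence this liminf stays above a positive constant on some neighborhood of $\bar z$, and for large $k$ Theorem \ref{thm:[BZ05,3.3.4]} applies at both $x_{i_{k}}$ and $y_{i_{k}}$. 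Applying it with tolerance $\varepsilon_{k}:=|v_{k}|/k$ yields $\lambda_{k},\mu_{k}>0$, base points $\tilde x_{k},\tilde y_{k}\to\bar z$ with matching function values, and Fr\'echet subgradients $\tilde x_{k}^{*}\in\partial_{F}f(\tilde x_{k})$, $\tilde y_{k}^{*}\in\partial_{F}f(\tilde y_{k})$ satisfying $\|\lambda_{k}\tilde x_{k}^{*}-v_{k}\|\le\varepsilon_{k}$ and $\|\mu_{k}\tilde y_{k}^{*}+v_{k}\|\le\varepsilon_{k}$. Setting $u_{k}=v_{k}/|v_{k}|$, $t_{k}=\lambda_{k}/|v_{k}|$, $s_{k}=\mu_{k}/|v_{k}|$, both $t_{k}\tilde x_{k}^{*}-u_{k}\to\mathbf{0}$ and $s_{k}\tilde y_{k}^{*}+u_{k}\to\mathbf{0}$ in norm.

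If (c) fails along this subsequence, weak ball compactness furnishes a further subsequence with $u_{k}\rightharpoonup u\neq\mathbf{0}$, whence $t_{k}\tilde x_{k}^{*}\rightharpoonup u$ and $s_{k}\tilde y_{k}^{*}\rightharpoonup -u$. The heart of the argument is a case analysis on subsequential limits of $t_{k},s_{k}$ in $[0,\infty]$. The case $t_{k}\to\infty$ is forbidden: weak convergence of $t_{k}\tilde x_{k}^{*}$ forces norm boundedness (uniform boundedness), so $\tilde x_{k}^{*}\to\mathbf{0}$ in norm, giving $\mathbf{0}\in\partial_{L}f(\bar z)\subset\partial_{C}f(\bar z)$ and contradicting the failure of (a); $s_{k}\to\infty$ is excluded symmetrically. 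After a further extraction, $t_{k}\to t\ge 0$ and $s_{k}\to s\ge 0$. When $t>0$, dividing gives $u/t\in\partial_{L}f(\bar z)\subset\partial_{C}f(\bar z)$; when $t=0$, the very definition of the singular Clarke subdifferential places $u\in\partial_{C}^{\infty}f(\bar z)$. The analogous dichotomy holds for $-u$ via $s$.

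I then combine the four sub-cases using that $\partial_{C}^{\infty}f(\bar z)$ is a closed convex cone and (read off the paper's definition of $\partial_{C}f$) that $\partial_{C}f(\bar z)+\partial_{C}^{\infty}f(\bar z)\subset\partial_{C}f(\bar z)$. When $t,s>0$ the convex combination $\tfrac{t}{s+t}(u/t)+\tfrac{s}{s+t}(-u/s)=\mathbf{0}$ lies in $\partial_{C}f(\bar z)$. When exactly one vanishes, say $s=0$, then $u/t\in\partial_{C}f(\bar z)$ and $-u/t\in\partial_{C}^{\infty}f(\bar z)$ sum to $\mathbf{0}\in\partial_{C}f(\bar z)$. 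Both outcomes contradict our assumption on (a). When $t=s=0$, both $\pm u$ lie in the convex cone $\partial_{C}^{\infty}f(\bar z)$, which therefore contains the line $\mathbb{R}u$, contradicting (b). The main obstacle I anticipate is calibrating the tolerance in Theorem \ref{thm:[BZ05,3.3.4]} to scale correctly with $|v_{k}|$ so the weak limits survive normalization; once this bookkeeping is settled, the convex-cone arithmetic at the end is routine.
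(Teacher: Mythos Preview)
Your proof is correct and follows essentially the same route as the paper's: invoke Proposition~\ref{pro:opp-normals} for the opposing normals, apply Theorem~\ref{thm:[BZ05,3.3.4]} to approximate them by scaled Fr\'echet subgradients, pass to weak limits via Banach--Alaoglu, and run a case split to land in (a), (b), or (c). The only organizational difference is that you parametrize the endgame by the subsequential limits of the scalars $t_k,s_k$ (normalized multipliers) rather than by weak cluster points of $x_i^{*},y_i^{*}$ as the paper does, which is a clean repackaging of the same dichotomy.
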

\begin{proof}
 We present both the finite dimensional and infinite dimensional
versions of the proof to our result. 

Suppose the subsequence $\left\{ (x_{i},y_{i})\right\} _{i\in J}$
is such that $\lim_{i\rightarrow\infty,i\in J}(x_{i},y_{i})=(\bar{z},\bar{z})$,
where $J\subset\mathbb{N}$. We can choose $J$ so that none of the
elements in $\left\{ (x_{i},y_{i})\right\} _{i\in J}$ are such that
$\liminf_{x\rightarrow x_{i}}d\left(\partial_{F}f(x);\mathbf{0}\right)=0$
or $\liminf_{y\rightarrow y_{i}}d\left(\partial_{F}f(y);\mathbf{0}\right)=0$,
otherwise we have $\mathbf{0}\in\partial_{C}f(\bar{z})$ by the definition
of the Clarke subdifferential, which is what we seek to prove. (In
finite dimensions, the condition $\liminf_{x\rightarrow x_{i}}d(\partial_{F}f(x);\mathbf{0})=0$
can be replaced by $\mathbf{0}\in\partial_{L}f(x_{i})$.) We proceed
to apply Theorem \ref{thm:[BZ05,3.3.4]} (and Theorem \ref{thm:[RW98,10.3]}
for finite dimensions) to find out more about $N_{F}(\lev_{\leq l_{i}}f;x_{i})$.

We first prove the result for finite dimensions. If $\mathbf{0}\in\partial_{L}f(\bar{z})$,
we are done. Otherwise, by Proposition \ref{pro:opp-normals} and
Theorem \ref{thm:[RW98,10.3]}, there is a positive multiple of $v=\lim_{i\rightarrow\infty}\frac{y_{i}-x_{i}}{\left|y_{i}-x_{i}\right|}$
that lies in either $\partial_{L}f(\bar{z})$ or $\partial^{\infty}f(\bar{z})$.
Similarly, there is a positive multiple of $-v=\lim_{i\rightarrow\infty}\frac{x_{i}-y_{i}}{\left|y_{i}-x_{i}\right|}$
lying in either $\partial_{L}f(\bar{z})$ or $\partial^{\infty}f(\bar{z})$.
If either $v$ or $-v$ lies in $\partial_{L}f(\bar{z})$, then we
can conclude $\mathbf{0}\in\partial_{C}f(\bar{z})$ from the definitions.
Otherwise both $v$ and $-v$ lie in $\partial_{C}^{\infty}f(\bar{z})$,
so $\mathbb{R}\left\{ v\right\} \subset\partial_{C}^{\infty}f(\bar{z})$
as needed.

We now prove the result for infinite dimensions. The point $\bar{z}$
is the common limit of $\left\{ x_{i}\right\} _{i\in J}$ and $\left\{ y_{i}\right\} _{i\in J}$.
By the optimality of $\left|x_{i}-y_{i}\right|$ and Proposition \ref{pro:opp-normals},
we have $y_{i}-x_{i}\in N_{F}(\lev_{\leq l_{i}}f;x_{i})$ and $x_{i}-y_{i}\in N_{F}(\lev_{\leq l_{i}}f;y_{i})$.
By Theorem \ref{thm:[BZ05,3.3.4]}, for any $\kappa_{i}\rightarrow0_{+}$,
there is a $\lambda_{i}>0$, $x_{i}^{\prime}\in\mathbb{B}_{\kappa_{i}\left|x_{i}-y_{i}\right|}(x_{i})$
and $x_{i}^{*}\in\partial_{F}f\left(x_{i}^{\prime}\right)$ such that
$\left|\lambda_{i}x_{i}^{*}-(y_{i}-x_{i})\right|<\kappa_{i}\left|y_{i}-x_{i}\right|$.
Similarly, there is a $\gamma_{i}>0$, $y_{i}^{\prime}\in\mathbb{B}_{\kappa_{i}\left|y_{i}-x_{i}\right|}(y_{i})$
and $y\in\partial_{F}f(y_{i}^{\prime})$ such that $\left|\gamma_{i}y_{i}^{*}-(x_{i}-y_{i})\right|<\kappa_{i}\left|x_{i}-y_{i}\right|$.
If either $x_{i}^{*}$ or $y_{i}^{*}$ converges to $\mathbf{0}$,
then $\mathbf{0}\in\partial_{C}f(\bar{z})$, and we are done. Otherwise,
by the Banach Aloaglu theorem, the unit ball is compact, so $\left\{ \frac{1}{\left|x_{i}^{*}\right|}x_{i}^{*}\right\} _{i}$
and $\left\{ \frac{1}{\left|y_{i}-x_{i}\right|}(y_{i}-x_{i})\right\} _{i}$
have weak cluster points. We now show that they must have the same
cluster points by showing that their difference converges to $\mathbf{0}$
(in the strong topology). Now,\begin{eqnarray*}
\left|\frac{\lambda_{i}x_{i}^{*}}{\left|y_{i}-x_{i}\right|}\right| & \leq & \left|\frac{\lambda_{i}x_{i}^{*}}{\left|y_{i}-x_{i}\right|}-\frac{y_{i}-x_{i}}{\left|y_{i}-x_{i}\right|}\right|+\left|\frac{y_{i}-x_{i}}{\left|y_{i}-x_{i}\right|}\right|\\
 & \leq & \kappa_{i}+1,\end{eqnarray*}
and similarly, $1-\kappa_{i}\leq\left|\frac{\lambda_{i}x_{i}^{*}}{\left|y_{i}-x_{i}\right|}\right|$,
so $\left|\frac{\lambda_{i}x_{i}^{*}}{\left|y_{i}-x_{i}\right|}\right|\rightarrow1$,
and thus \[
\left|\frac{\lambda_{i}x_{i}^{*}}{\left|y_{i}-x_{i}\right|}-\frac{x_{i}^{*}}{\left|x_{i}^{*}\right|}\right|=\left|\left|\frac{\lambda_{i}x_{i}^{*}}{\left|y_{i}-x_{i}\right|}\right|-\left|\frac{x_{i}^{*}}{\left|x_{i}^{*}\right|}\right|\right|\rightarrow0.\]
This means that \[
\left|\frac{x_{i}^{*}}{\left|x_{i}^{*}\right|}-\frac{y_{i}-x_{i}}{\left|y_{i}-x_{i}\right|}\right|\leq\left|\frac{\lambda_{i}x_{i}^{*}}{\left|y_{i}-x_{i}\right|}-\frac{x_{i}^{*}}{\left|x_{i}^{*}\right|}\right|+\left|\frac{\lambda_{i}x_{i}^{*}}{\left|y_{i}-x_{i}\right|}-\frac{y_{i}-x_{i}}{\left|y_{i}-x_{i}\right|}\right|\rightarrow0,\]
which was what we claimed earlier. This implies that $\frac{x_{i}^{*}}{\left|x_{i}^{*}\right|}$
and $\frac{y_{i}^{*}}{\left|y_{i}^{*}\right|}$ have weak cluster
points that are the negative of each other. 

We now suppose that conclusion (c) does not hold. If $\left\{ x_{i}^{*}\right\} _{i}$
has a nonzero weak cluster point, say $\bar{x}^{*}$, then $\bar{x}^{*}$
belongs to $\partial_{C}f(\bar{z})$. Then $\left\{ y_{i}^{*}\right\} _{i}$
either has a weak cluster point $\bar{y}^{*}$ that is strictly a
negative multiple of $\bar{x}^{*}$, which implies that $\mathbf{0}\in\partial_{C}f(\bar{z})$
as claimed, or there is some $\bar{y}^{*,\infty}\in\partial_{C}^{\infty}f(\bar{z})$
which is a negative multiple of $\bar{x}^{*}$, which also implies
that $\mathbf{0}\in\partial_{C}f(\bar{z})$ as needed. 

If neither $\left\{ x_{i}^{*}\right\} _{i}$ or $\left\{ y_{i}^{*}\right\} _{i}$
converges weakly, then two (nonzero) weak cluster points of $\frac{x_{i}^{*}}{\left|x_{i}^{*}\right|}$
and $\frac{y_{i}^{*}}{\left|y_{i}^{*}\right|}$ that point in opposite
directions give a line through the origin in $\partial_{C}^{\infty}f(\bar{z})$
as needed.
\end{proof}
In finite dimensions, conclusion (b) of Theorem \ref{thm:second-critical-pt-thm}
is precisely the lack of {}``epi-Lipschitzness'' \cite[Exercise 9.42(b)]{RW98}
of $f$. One example where Algorithm \ref{alg:globally-convergent-MPT}
does not converge to a Clarke critical point but to a point with its
singular subdifferential $\partial_{C}^{\infty}f(\cdot)$ containing
a line through the origin is $f:\mathbb{R}\rightarrow\mathbb{R}$
defined by $f(x)=-\sqrt{\left|x\right|}$. Algorithm \ref{alg:globally-convergent-MPT}
converges to the point $0$, where $\partial_{C}f(0)=\emptyset$ and
$\partial_{C}^{\infty}f(0)=\mathbb{R}$. We do not know of an example
where only condition (c) holds.

\section*{Acknowledgments}

We thank Jianxin Zhou for comments on an earlier version of the manuscript,
and we thank an anonymous referee for feedback, which have improved
the presentation in the paper.

\end{document}